\renewcommand{\leq}{\leqslant}
\renewcommand{\geq}{\geqslant}
\newcommand{\A}{\mathcal A}
\newcommand{\cA}{\A}
\newcommand{\F}{\mathcal F}
\newcommand{\ptl}{\partial}
\newcommand{\G}{\mathbb{G}}
\newcommand{\rb}{\partial^* E}
\newcommand{\dmc}{\mathrm{H}(E,\Om)}
\newcommand{\tmc}{|\mathrm{H}|(E,\Om)}
\newcommand{\rr}{{\mathbb{R}}}
\renewcommand{\k}{\kappa}
\newcommand{\Om}{\Omega}
\newcommand{\eps}{\varepsilon}
\newcommand{\ga}{\gamma}
\newcommand{\tv}{t_{\mathcal V}}
\newcommand{\mmc}{\mu^{\mathrm{H}}}
\newcommand{\perm}{P(E,\cdot)}
\newcommand{\ve}{\nu_E}
\newcommand{\Lag}{\mathscr{L}}
\newcommand{\cW}{\mathcal{W}_1}
\newcommand{\cV}{\mathcal{V}}
\renewcommand{\phi}{\varphi}
\definecolor{champagne}{rgb}{0.97, 0.91, 0.81}
\definecolor{asparagus}{rgb}{0.53, 0.66, 0.42}
\DeclareMathOperator{\divv}{div}
\DeclareMathOperator{\dive}{div_{\partial E}}
\DeclareMathOperator{\trace}{trace}
\DeclareMathOperator{\supp}{supp}
\DeclareMathOperator{\arcsinh}{arcsinh}
\DeclareMathOperator{\jac}{Jac}
\newtheorem{theorem}{Theorem}[section]
\newtheorem{proposition}[theorem]{Proposition}
\newtheorem{definition}[theorem]{Definition}
\newtheorem{lemma}[theorem]{Lemma}
\newtheorem{corollary}[theorem]{Corollary}
\theoremstyle{definition}
\newtheorem{remark}[theorem]{Remark}
\newtheorem{example}[theorem]{Example}
\theoremstyle{remark}
\numberwithin{equation}{section}
\title[Variational properties of the total inverse mean curvature]{Variational properties of the total inverse mean curvature in the plane under boundary constraints}
\author[J. Pozuelo]{Juli\'an Pozuelo} 
\address[Juli\'an Pozuelo]{Dipartimento di Matematica "Tullio Levi-Civita", Università degli Studi di Padova, via Trieste 63, 35131 Padova (PD), Italy}
\email{julian DOT pozuelodominguez AT unipd DOT it}
\author[S.~Verzellesi]{Simone Verzellesi}
\address[Simone Verzellesi]{Dipartimento di Matematica "Tullio Levi-Civita", Università degli Studi di Padova, via Trieste 63, 35131 Padova (PD), Italy}
\email{simone DOT verzellesi AT unipd DOT it}
\author[G.~Vianello]{Giacomo Vianello}
\address[Giacomo Vianello]{Dipartimento di Matematica "Tullio Levi-Civita", Università degli Studi di Padova, via Trieste 63, 35131 Padova (PD), Italy}
\email{giacomo DOT vianello AT unipd DOT it}
\date{\today}
\subjclass{Primary: 49Q10;\,53C24. Secondary: 53A10.}
\keywords{Heintze-Karcher inequality; total inverse mean curvature; boundary constraints.}
\thanks{\textit{Acknowledgments}.
	The authors thank M. Fogagnolo for stimulating conversations about the topics of the paper. S.V. and G.V. are members of Istituto Nazionale di Alta Matematica (INdAM - GNAMPA). S.V. and G.V. are supported by INdAM–GNAMPA 2025 Project \emph{Structure of sub-Riemannian hypersurfaces in Heisenberg groups}, 
	and by MIUR-PRIN 2022 Project \emph{Regularity problems in sub-Riemannian structures}}
\begin{document}

	\begin{abstract}
		We study the variational behavior of the total inverse mean curvature of curves with prescribed boundary in the half-plane. We characterize the existence of critical points with prescribed area. We show that such critical points are strongly stable. As an application, we prove a local minimality property. 
	\end{abstract}
	
	\maketitle

	\section{Introduction}
	
	The \emph{Heintze-Karcher inequality} in the Euclidean space states that, given a bounded domain $\Om\subseteq\rr^{n}$ with smooth boundary and with positive mean curvature $H$, it holds that
	\begin{equation}\label{HK-Euclidean}
		|\Om|\leq \frac{n-1}{n}\int_{\ptl\Om}\frac{1}{H}.
	\end{equation}
	Moreover, equality in \eqref{HK-Euclidean} holds if and only if $\Om$ is an Euclidean ball. Using dilations, \eqref{HK-Euclidean} is equivalent to the following optimization problem: spheres, and only spheres, minimize the total inverse mean curvature among closed, strictly mean-convex hypersurfaces enclosing the same volume. Inequality \eqref{HK-Euclidean} was first proved by Heintze and Karcher \cite{MR533065}. Later on, Ros \cite{MR996826} gave another proof of \eqref{HK-Euclidean} in spaces with nonnegative Ricci curvature by means of \emph{Reilly's formula} \cite{MR474149}. Owing to \eqref{HK-Euclidean}, he provided a new proof of \emph{Alexandrov theorem} \cite{MR86338,MR102114}, i.e. a closed, embedded, constant mean curvature hypersurface in $\rr^n$ is a round sphere, highlighting a link between the Heintze-Karcher inequality and the isoperimetric problem.\\
	
	Recently, various generalizations of the Heintze–Karcher inequality have been established. By means of the Aleksandrov-Bakelman-Pucci (ABP) method (cf. e.g. (\cite{MR1814364}), Xia and Zhang \cite{MR3702549} reproved \eqref{HK-Euclidean} in spaces of nonnegative Ricci curvature, while Brendle \cite{MR3090261} proved a Heintze-Karcher inequality in some \emph{warped products} to obtain an Aleksandrov theorem. Similar results have been achieved by Li and Xia \cite{MR4031740} in \emph{sub-static manifolds}, and have been recently improved by Borghini, Fogagnolo and Pinamonti \cite{MR4819146,MR4438902}. Other versions of \eqref{HK-Euclidean} have been obtained in \emph{weighted manifolds}  \cite{MR2161354} and in \emph{metric measure spaces} \cite{MR4127847}.\\
	
	A relevant proof of the Heintze-Karcher inequality was given by Montiel and Ros \cite{MR1173047} in space forms. Their approach, employing a clever fibration of $\Omega$ by geodesic segments, 
	provides another effective proof of the Alexandrov theorem. These ideas were adapted by Delgadino and Maggi \cite{MR3921314} to extend the Alexandrov theorem to the measure-theoretic framework: connected sets of finite perimeter with finite volume and constant distributional mean curvature are Euclidean balls. As a by-product, the authors extend \eqref{HK-Euclidean} to strictly mean convex sets in an appropriate \emph{viscosity} sense, e.g. allowing the presence of outward corners.\\
	
	
	Moreover, Montiel and Ros' argument paved the way for studying \emph{free-boundary} Heintze-Karcher inequalities and their applications in the setting of capillarity. Motivated by the study of the equilibrium shape assumed by a droplet of small volume on a flat homogeneous substrate, Delgadino and Weser \cite{MR4783570} obtained suitable Heintze-Karcher inequalities for smooth hypersurfaces with free-boundary in the half-space, and characterized spherical caps as the optimal configurations.
	A similar analysis has been recently performed in \cite{derosa2025rigiditycriticalpointshydrophobic} in a non-smooth setting.
	These examples (see also \cite{MR4562813,MR4817296,MR4567578}) further highlight the connection between the Heintze–Karcher inequality and the isoperimetric problem.\\

	In this paper, we show that the aforementioned symmetries between the the Heintze-Karcher inequality and the isoperimetric problem break down when we consider problems with \emph{constrained boundary}.
	Precisely, we consider the problem of minimizing the \emph{total inverse curvature}
	$$
	\mathcal F(\ga)=\int_\gamma\frac{1}{H},
	$$
	among positively curved curves $\ga:[0,2L]\to\rr^2_+$  with prescribed boundary $\{(-x_0,0)\}\cup\{(x_0,0)\}$ and enclosing a prescribed area. We call them \emph{admissible curves}. Here $\rr^2_+=\{(x,y): y>0\}$. Precisely, if $x_0,\A_0>0$ are fixed, denoting by $\A(\gamma)$ the area enclosed by $\gamma$ in $\rr^2_+$, our optimization problem reads as 
	\begin{equation}\label{intro_minimizationproblem}
		\inf\mathcal F(\ga),\qquad\gamma(0)=(x_0,0),\qquad\gamma(2L)=(-x_0,0)\qquad\A(\gamma)=\A_0.
	\end{equation}
	We propose a variational approach to the problem, with the aim of characterizing its equilibrium configurations and studying their stability properties. 
	The starting point is the identification of the correct notion of variation associated with \eqref{intro_minimizationproblem}. Accordingly, we say that a variation is \emph{admissible} when it induces a deformation of admissible curves. Among all admissible variations, the natural class to consider in connection with \eqref{intro_minimizationproblem} is that of \emph{area-preserving admissible variations}, namely those which preserves the enclosed area.
	Were \eqref{intro_minimizationproblem} consistent with the corresponding isoperimetric problem under the same boundary constraints - as it is in the free-boundary setting - we would expect its critical points to be one-dimensional spherical caps, i.e. circular arcs. We show that this is not the case. Indeed, stationary curves under area-preserving admissible variations are given explicitly by $\ga=(x,y):[0,2L]\to\rr^2$, where
	\begin{equation}\label{intro_criticalpoint}
		\begin{split}
			x(s)&=-\frac{\k}{2}\left(\frac{\sigma}{\pi+\sigma}\sin\left(\frac{\pi+\sigma}{\sigma}\arcsin\left(\frac{s-L}{\k}\right)\right)+\frac{\sigma}{\pi-\sigma}\sin\left(\frac{\pi-\sigma}{\sigma}\arcsin\left(\frac{s-L}{\k}\right)\right)\right)\\
			y(s)&=\frac{\k}{2}\left(\frac{\sigma}{\pi+\sigma}\cos\left(\frac{\pi+\sigma}{\sigma}\arcsin\left(\frac{s-L}{\k}\right)\right)+\frac{\sigma}{\pi-\sigma}\cos\left(\frac{\pi-\sigma}{\sigma}\arcsin\left(\frac{s-L}{\k}\right)\right)\right)+\frac{\pi x_0}{\sigma\tan\sigma}
		\end{split}
	\end{equation}
	Here $L=L(x_0,\A_0)$ is half the length of $\gamma$, and moreover
	\begin{equation*}
		\sigma=\pi\sqrt{\frac{x_0}{L+x_0}},\qquad \k=\frac{L}{ \sin\left(\sigma\right)}.
	\end{equation*}
	These configurations are uniquely determined by the data of the problem, namely $x_0$ and $\A_0$. Even more surprisingly, the existence of such configurations is governed by a specific compatibility condition among the parameters. In fact, the existence and uniqueness of such configurations are equivalent to the relation
	\begin{equation}\label{intro_treshold}
		\A_0>\frac{3}{2}\pi x_0^2.
	\end{equation}
	Heuristically, this threshold expresses the fact that equilibrium configurations cannot undergo excessive stretching.
	
		\begin{figure}[h!]
		\centering
		\begin{tikzpicture}[scale=1]
			\def\xo{1} 
			
			\begin{axis}[
				domain=0:8,
				samples=400,
				axis lines=middle,
				xlabel={$x$},
				ylabel={$y$},
				width=8cm,
				axis equal image, 
				xtick=\empty, ytick=\empty,
				]
				\pgfmathsetmacro{\L}{4}
				\pgfmathsetmacro{\k}{\L / (sin(deg(pi*sqrt(\xo/(\L+\xo)))))} 
				\pgfmathsetmacro{\G}{(1/pi)*sqrt((\L+\xo)/\xo)}
				
				\pgfmathsetmacro{\tmp}{(\k*\k)/(\L*\L) - 1}
				\pgfmathsetmacro{\tmppos}{max(0,\tmp)}
				\pgfmathsetmacro{\yshift}{sqrt(\xo*(\L+\xo)*\tmppos)}
				
				\addplot [
				domain=0:2*\L,
				samples=400,
				fill=red,
				fill opacity=0.25,
				draw=none
				]
				(
				{ -(\k/(2*\G*pi+2))*sin((\G*pi+1)*asin((x-\L)/\k))
					-(\k/(2*\G*pi-2))*sin((\G*pi-1)*asin((x-\L)/\k)) },
				{ (\k/(2*\G*pi+2))*cos((\G*pi+1)*asin((x-\L)/\k))
					+(\k/(2*\G*pi-2))*cos((\G*pi-1)*asin((x-\L)/\k))
					+\yshift }
				);
				
				\addplot [
				domain=0:2*\L,
				samples=400,
				thick,
				black
				]
				(
				{ -(\k/(2*\G*pi+2))*sin((\G*pi+1)*asin((x-\L)/\k))
					-(\k/(2*\G*pi-2))*sin((\G*pi-1)*asin((x-\L)/\k)) },
				{ (\k/(2*\G*pi+2))*cos((\G*pi+1)*asin((x-\L)/\k))
					+(\k/(2*\G*pi-2))*cos((\G*pi-1)*asin((x-\L)/\k))
					+\yshift }
				);
			\end{axis}
			
			\begin{axis}[
				at={(9cm,0cm)}, 
				domain=0:10,
				samples=400,
				axis lines=middle,
				xlabel={$x$},
				ylabel={$y$},
				width=9.2cm,
				axis equal image, 
				xtick=\empty, ytick=\empty,
				]
				\pgfmathsetmacro{\L}{5}
				\pgfmathsetmacro{\k}{\L / (sin(deg(pi*sqrt(\xo/(\L+\xo)))))} 
				\pgfmathsetmacro{\G}{(1/pi)*sqrt((\L+\xo)/\xo)}
				
				\pgfmathsetmacro{\tmp}{(\k*\k)/(\L*\L) - 1}
				\pgfmathsetmacro{\tmppos}{max(0,\tmp)}
				\pgfmathsetmacro{\yshift}{sqrt(\xo*(\L+\xo)*\tmppos)}
				
				\addplot [
				domain=0:2*\L,
				samples=400,
				fill=blue,
				fill opacity=0.25,
				draw=none
				]
				(
				{ -(\k/(2*\G*pi+2))*sin((\G*pi+1)*asin((x-\L)/\k))
					-(\k/(2*\G*pi-2))*sin((\G*pi-1)*asin((x-\L)/\k)) },
				{ (\k/(2*\G*pi+2))*cos((\G*pi+1)*asin((x-\L)/\k))
					+(\k/(2*\G*pi-2))*cos((\G*pi-1)*asin((x-\L)/\k))
					+\yshift }
				);
				
				\addplot [
				domain=0:2*\L,
				samples=400,
				thick,
				black
				]
				(
				{ -(\k/(2*\G*pi+2))*sin((\G*pi+1)*asin((x-\L)/\k))
					-(\k/(2*\G*pi-2))*sin((\G*pi-1)*asin((x-\L)/\k)) },
				{ (\k/(2*\G*pi+2))*cos((\G*pi+1)*asin((x-\L)/\k))
					+(\k/(2*\G*pi-2))*cos((\G*pi-1)*asin((x-\L)/\k))
					+\yshift }
				);
			\end{axis}
			
		\end{tikzpicture}
		\caption{Two equilibrium configurations with same boundary condition and different enclosed area. The configuration on the left-hand side is closer to the treshold \eqref{intro_treshold}}
		\label{fig:curve}
	\end{figure}
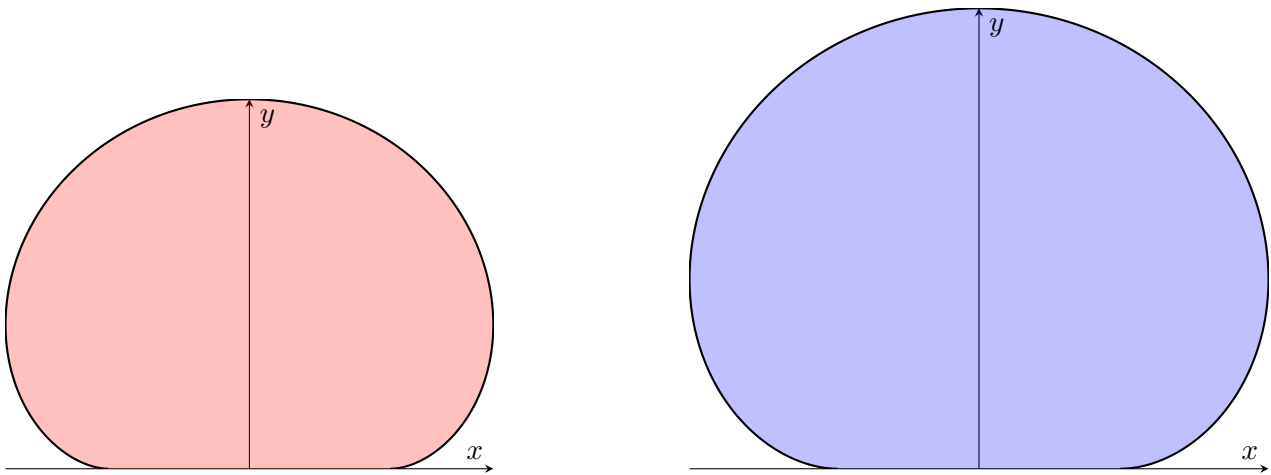
\bigskip
	 The characterization of critical points, as well as the derivation of \eqref{intro_treshold}, follows by imposing the vanishing of the first variation of $\F$, which we compute in \Cref{firstvariationofimc}, under area-preserving admissible variations. This leads to the \emph{Euler-Lagrange equation}
	\begin{equation}\label{intro_equazionepuntocriticoinlemma}
		2+\Delta^\gamma(H^{-2})=\lambda\qquad\text{ on $\gamma$},
	\end{equation}
	where $\Delta^\gamma$ is the Laplace-Beltrami operator on $\ga$ and $\lambda$ is the Lagrange multiplier emerging by the area constraint, and to the first-order boundary condition \begin{equation}\label{intr_firstorderbc}
		\gamma\text{ meets the horizontal axis tangentially}.
	\end{equation}
	It is worth noting that, although circular arcs solve \eqref{intro_equazionepuntocriticoinlemma} for $\lambda=2$, they are ruled out by the boundary tangency condition prescribed by \eqref{intr_firstorderbc}. 
	Moreover, the correct Lagrange multiplier is uniquely determined by the relation 
	\begin{equation}\label{intr_lagmolt}
		\lambda=\frac{2L}{L+x_0},
	\end{equation}
	and the first-order variational analysis of $\F$ is equivalent to that of the \emph{penalized functional}
	\begin{equation*}
		\F-\lambda\A
	\end{equation*}
	under arbitrary admissible variations, as we show in \Cref{uniquecriticalthm}. This fact is consistent with what happens to the $n$-dimensional area functional under volume constraints (cf. \cite{MR731682}). The characterization of critical points further exploits a Steiner-type symmetrization argument, which we explain in \Cref{sec_steiner}: optimal configurations must be symmetric with respect to the vertical axis. Our critical points yield a class of counterexamples to the validity of \eqref{HK-Euclidean} when inward corners are allowed. Indeed, the domain $\Om$ whose boundary consists of $\gamma(x_0,\A_0)$ and its reflection across the horizontal axis, satisfies
	\begin{equation*}
		\A(\Om)= \frac{1}{\lambda}\left(\frac{2(\pi^2-\sigma^2)\sigma+(\pi^2+\sigma^2)\sin 2\sigma}{2(\pi^2-\sigma^2)\sigma+(\pi^2-\sigma^2)\sin 2\sigma}\right)\int_{\ptl\bar\Om}\frac{1}{H}>\frac{1}{2}\int_{\ptl\Om}\frac{1}{H},
	\end{equation*}
	because \eqref{intr_lagmolt} implies that $\lambda<2$ (\Cref{rem_funzionalisupunticritici}). This fact prevents the generalization of \eqref{HK-Euclidean} to sets with strictly positive mean curvature everywhere but in a discrete set. 
	Subsequently, we address the problem of the stability of critical points, that is, the second-order properties of the total inverse curvature. The starting point is the computation of the second variations of $\F$ (\Cref{propsecvaroff}) and of its penalized version $\F-\lambda\A$ (\Cref{corollstbofg}), evaluated at the equilibrium configurations. Precisely, let $\mathcal V$ be any admissible variation of a fixed critical point. We denote by 
	\begin{equation*}
		X:=\left.\frac{\partial \mathcal V}{\partial t}\right|_{t=0}\qquad\text{and}  \qquad X':=\left.\frac{\partial^2 \mathcal V}{\partial t^2}\right|_{t=0}
	\end{equation*}
	its \emph{velocity vector} and \emph{acceleration vector} respectively. Assume that $\gamma$ is parametrized by arc-length. Let $\varphi,\,\varphi_\tau,\,\psi,\,\psi_\tau$ be such that 
	\begin{equation}\label{intro_quattrofunzioni}
		X=\varphi N+\varphi_\tau\dot\gamma\qquad\text{and}\qquad X'=\psi N+\psi_\tau\dot\gamma,
	\end{equation}
	where $N=\dot \ga^\perp$.  We show that the second variations of $\F$ and $\F-\lambda\A$ are given respectively by
	\begin{equation}\label{intro_secvarf}
		\left.\frac{d^2\F(\mathcal V(\cdot,t))}{dt^2}\right|_{t=0}=\int_0^{2L}\left(2H\varphi^2-\frac{2\dot\varphi^2}{H}+\frac{2\ddot\varphi^2}{H^3}\right)\,ds+\lambda\int_0^{2L}\left(\psi+H\varphi_\tau^2+2\varphi\dot\varphi_\tau\right)\,ds+\left[\frac{\dot\psi}{H^2}\right]_{0}^{2L}
	\end{equation}
	and
	\begin{equation}\label{intor_secvar2}
		\left.\frac{d^2(\F-\lambda \A)(\mathcal V(\cdot,t))}{dt^2}\right|_{t=0}=\int_0^{2L}\left((2-\lambda)H\varphi^2-\frac{2\dot\varphi^2}{H}+\frac{2\ddot\varphi^2}{H^3}\right)\,ds+\left[\frac{\dot\psi}{H^2}\right]_{0}^{2L}.
	\end{equation}
	Remarkably, the second variation of the penalized functional, apart from the boundary contribution, depends solely on the normal component of the velocity $X$, in contrast to what happens for $\F$. This highlights a further analogy with the area functional (cf. \cite{MR731682}). The correct notions of stability for $\F$ and for $\F-\lambda\A$ are then formulated by means of \eqref{intro_secvarf} and \eqref{intor_secvar2} (\Cref{stabilitydefinition}). Namely we impose, respectively, the non-negativity of \eqref{intro_secvarf} under area-preserving admissible variations, and of \eqref{intor_secvar2} under arbitrary admissible variations. Yet, as with the area functional (cf. \cite{MR731682}), the second-order behaviors of $\F$ and 
	$\F-\lambda\A$ are no longer equivalent. In fact, as we prove in \Cref{stabilityequivalent}, the stability of $\F$ under area-preserving variations coincides with the stability of $\F-\lambda\A$ under admissible variations satisfying 
	\begin{equation*}
		\int_0^{2L}\varphi\,ds=0,
	\end{equation*}
	which we call \emph{first-order area-preserving} admissible variations. As a byproduct of this equivalence, we deduce that the stability of $\F-\lambda\A$ under arbitrary variations is indeed a more restrictive condition. Nevertheless, we are able to show that the equilibrium configurations are stable in this strengthened sense (\Cref{teoremastabilitaperg}). This provides an interesting contrast with the behavior of the area functional under volume constraints, where circles are indeed \emph{unstable} critical points for the relevant penalized functional  (cf. \cite{MR731682}).
	By \eqref{intor_secvar2}, the study of the sign of the second variation of $\F-\lambda\A$ relies on the validity of a sharp weighted Hardy-type inequality of the form 
	\begin{equation}\label{intro_sharplowerbound}
		\mu_{\mathcal W_1}\int_0^{2L}\frac{2\dot \varphi^2}{H}\,ds\leq \int_0^{2L}\left(\frac{2\ddot \varphi^2}{H^3}+(2-\lambda)H \varphi^2\right)\,ds,
	\end{equation}
	for some $\mu_{\mathcal W_1}>0,$ where $\varphi$ belongs to the functional space
	\begin{equation*}
		\mathcal W_1=\left\{\varphi\in W^{2,2}(0,2L)\,:\,\varphi(0)=\varphi(2L)=0,\,\dot\varphi(0)=\dot\varphi(2L)=0\right\}
	\end{equation*} 
	We refer to \cite{Beesack_1971,Flork_Lasarz_2001,Kwong_Zettl_2006,Leighton_1970,Talenti_1975} for some accounts on weighted Hardy-type inequalities. The space $\mathcal W_1$ arises naturally imposing that variations are admissible. We prove the well-posedness of \eqref{intro_sharplowerbound} in \Cref{exminstab}, and we reduce the problem to determine the smallest positive constant $\mu$ for which the problem 
	\begin{equation}\label{intro_ELperstabilita}
		\left\{
		\begin{aligned}
			&\left(\frac{2}{H^3} \ddot \varphi\right)''+\mu \left(\frac{2}{H} \dot \varphi\right)' + (2-\lambda)H\varphi = 0\qquad\text{on }(0,2L), \\
			&\varphi(0) = \varphi(2L) = 0, \\
			&\dot \varphi (0) = \dot \varphi(2L) = 0
		\end{aligned}
		\right.
	\end{equation}
	admits a non-trivial solution (\Cref{characteigentwosided}). This equivalence follows from a variational analysis of \eqref{intro_sharplowerbound}, from which \eqref{intro_ELperstabilita} emerges as the relevant Euler–Lagrange equation. In \Cref{teoremastabilitaperg}, we show that $\mu_{\mathcal W_{1}}>1$. The proof of this fact relies on the explicit characterization of the general solution to the first equation in \eqref{intro_ELperstabilita} (\Cref{sec_soluzioniiiiiiiiiii}). In turn, the behavior of a solution depends on whether $\mu$
	is less than, equal to, or greater than the threshold value
	\begin{equation*}
		\frac{x_0}{L+x_0}+2\sqrt{\frac{x_0}{L+x_0}}.
	\end{equation*}
	Consequently, the analysis of \eqref{intro_ELperstabilita} must take these different cases into account. Combining \eqref{intor_secvar2} with \eqref{intro_sharplowerbound}, the fact that $\mu_{\mathcal W_1}>1$ and the fact that $H$ is strictly positive and bounded on $[0,2L]$, the second variation of $\F-\lambda\A$ admits the uniform lower bound 
	\begin{equation*}
		\left.\frac{d^2(\F-\lambda \A)(\mathcal V(\cdot,t))}{dt^2}\right|_{t=0}\geq  {\mathcal C}\|\varphi\|^2_{W^{2,2}(0,2L)},
	\end{equation*}
	for some $ C= C(x_0,\A_0)$. Our
	results can therefore be summarized in the following statement.  
	\begin{theorem}\label{intro_mainthm}
		Let $x_0,\A_0>0$ be fixed. The following facts hold.
		\begin{itemize}
			\item [(i)] $\mathcal F$ admits a critical point under area-preserving admissible variations if and only if \eqref{intro_treshold} holds.
			In this case, it is unique, and it is explicitly given by \eqref{intro_criticalpoint}. We denote it by $\gamma(x_0,\A_0)$.
			\item [(ii)] There exists $\tilde {\mathcal C}=\tilde {\mathcal C}(x_0,\A_0)$ such that, if $\mathcal V$ is an admissible variation of $\gamma(x_0,\A_0)$, then
			\begin{equation*}
				\left.\frac{d^2(\F-\lambda \A)(\mathcal V(\cdot,t))}{dt^2}\right|_{t=0}\geq  \tilde {\mathcal C}\|\varphi\|^2_{W^{2,2}(0,2L)},
			\end{equation*}
			where $\lambda$ is as in \eqref{intr_lagmolt} and $\varphi$ is as in \eqref{intro_quattrofunzioni}. In particular,  $\gamma(x_0,\A_0)$ is a stable critical point of $\F-\lambda \A$ under arbitrary admissible variations.
			\item [(iii)] If $\mathcal V$ is an area-preserving admissible variation of $\gamma(x_0,\A_0)$ and $\varphi$ is as above, then 
			\begin{equation*}
				\left.\frac{d^2\F(\mathcal V(\cdot,t))}{dt^2}\right|_{t=0}\geq  \tilde {\mathcal C}\|\varphi\|^2_{W^{2,2}(0,2L)}.
			\end{equation*}
			In particular,  $\gamma(x_0,\A_0)$ is a stable critical point of $\F$ under area-preserving admissible variations.
		\end{itemize}
	\end{theorem}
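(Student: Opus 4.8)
The statement gathers the first- and second-order variational analysis of $\F$ carried out in the body of the paper; the plan is to assemble those results, so I indicate the role of each.

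\textbf{Item (i).} The first move is to reduce criticality to a differential problem. By the first variation formula (\Cref{firstvariationofimc}), a curve is critical for $\F$ under area-preserving admissible variations exactly when the Euler--Lagrange equation \eqref{intro_equazionepuntocriticoinlemma} holds for some constant $\lambda$ --- the Lagrange multiplier generated by the linearized area constraint $\int_0^{2L}\varphi\,ds=0$ --- together with the boundary tangency condition \eqref{intr_firstorderbc}; by \Cref{uniquecriticalthm} this is equivalent to criticality of $\F-\lambda\A$ under arbitrary admissible variations, which moreover forces $\lambda$ to be \eqref{intr_lagmolt}. Next I would invoke the Steiner-type symmetrization of \Cref{sec_steiner} to conclude that any critical curve is symmetric about the vertical axis. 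Parametrizing it by arc length with midpoint $s=L$, the one-dimensional Laplace--Beltrami operator is $d^2/ds^2$, so \eqref{intro_equazionepuntocriticoinlemma} becomes $(H^{-2})''=\lambda-2<0$: thus $H^{-2}$ is an even quadratic polynomial in $s-L$ with negative leading coefficient. Integrating the Frenet system with this prescribed curvature and imposing that the endpoints be $(\pm x_0,0)$ with horizontal tangent pins down $\sigma$, $\k$ and the vertical shift, producing exactly \eqref{intro_criticalpoint}; the one remaining parameter $L$ is then fixed by $\A(\gamma)=\A_0$. Finally I would verify that \eqref{intro_criticalpoint} defines a genuine admissible curve --- contained in $\rr^2_+$ and joining the prescribed endpoints --- precisely for the values of $L$ for which the enclosed area, an explicit function of $L$ (with $x_0$ fixed) whose range over the relevant interval of $L$ is $(\tfrac32\pi x_0^2,\infty)$, satisfies $\A(\gamma)>\tfrac32\pi x_0^2$; this gives the equivalence with \eqref{intro_treshold}, and the strict monotonicity of $L\mapsto\A(\gamma)$ gives uniqueness.

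\textbf{Item (ii).} The starting point is the second variation formula \eqref{intor_secvar2} of \Cref{corollstbofg}. The first step is to remove the boundary term: from the boundary conditions built into an admissible variation (fixed endpoints and preserved tangency) one gets $X(0)=X'(0)=0$ at $s=0,2L$, and the second-order boundary data of $\mathcal V$ entering $\dot\psi$ there are horizontal vectors while $N$ is vertical, so $\big[\dot\psi/H^2\big]_0^{2L}=0$; hence, with $\varphi\in\mathcal W_1$,
\[
\left.\frac{d^2(\F-\lambda \A)(\mathcal V(\cdot,t))}{dt^2}\right|_{t=0}=\mathcal I(\varphi)-\int_0^{2L}\frac{2\dot\varphi^2}{H}\,ds,\qquad\mathcal I(\varphi):=\int_0^{2L}\Big(\frac{2\ddot\varphi^2}{H^3}+(2-\lambda)H\varphi^2\Big)\,ds.
\]
The second step is to feed in the sharp weighted Hardy inequality \eqref{intro_sharplowerbound} together with the strict bound $\mu_{\mathcal W_1}>1$ from \Cref{teoremastabilitaperg}: since \eqref{intro_sharplowerbound} reads $\int_0^{2L}\tfrac{2\dot\varphi^2}{H}\,ds\le\mu_{\mathcal W_1}^{-1}\,\mathcal I(\varphi)$, it yields
\[
\left.\frac{d^2(\F-\lambda \A)(\mathcal V(\cdot,t))}{dt^2}\right|_{t=0}\ge\Big(1-\frac{1}{\mu_{\mathcal W_1}}\Big)\mathcal I(\varphi).
\]
The third step is coercivity of $\mathcal I$: since $\lambda=2L/(L+x_0)<2$ and $H$ is continuous and bounded away from $0$ and $\infty$ on $[0,2L]$, there is $c>0$ with $\mathcal I(\varphi)\ge c\big(\|\ddot\varphi\|_{L^2(0,2L)}^2+\|\varphi\|_{L^2(0,2L)}^2\big)$, and since $\varphi$ and $\dot\varphi$ vanish at both endpoints a Poincaré inequality on $\mathcal W_1$ upgrades this to $\mathcal I(\varphi)\ge c'\|\varphi\|_{W^{2,2}(0,2L)}^2$. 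Chaining the three steps gives item (ii) with $\tilde{\mathcal C}=(1-\mu_{\mathcal W_1}^{-1})c'>0$, and the coercive lower bound yields the asserted stability.

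\textbf{Item (iii) and the main obstacle.} Item (iii) is then immediate: if $\mathcal V$ is area-preserving, $t\mapsto\A(\mathcal V(\cdot,t))$ is constant, hence $\tfrac{d^2}{dt^2}\big|_{t=0}(\lambda\A)(\mathcal V(\cdot,t))=0$, so $\tfrac{d^2\F}{dt^2}\big|_{t=0}=\tfrac{d^2(\F-\lambda\A)}{dt^2}\big|_{t=0}$ and item (ii) applies verbatim; equivalently one may quote \Cref{stabilityequivalent} after noting that area preservation forces $\int_0^{2L}\varphi\,ds=0$. The hard part is the strict inequality $\mu_{\mathcal W_1}>1$ underlying \Cref{teoremastabilitaperg}: the general theory of weighted Hardy inequalities only gives \emph{some} $\mu_{\mathcal W_1}>0$, which is not enough to absorb the negative term $-\int_0^{2L}2\dot\varphi^2/H\,ds$, so stability genuinely requires computing the first eigenvalue of the fourth-order problem \eqref{intro_ELperstabilita}. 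This forces one to integrate the ODE in \eqref{intro_ELperstabilita} explicitly (\Cref{sec_soluzioniiiiiiiiiii}), to split into cases according to whether $\mu$ is smaller than, equal to, or larger than the threshold $\tfrac{x_0}{L+x_0}+2\sqrt{\tfrac{x_0}{L+x_0}}$, and to check in each case that \eqref{intro_ELperstabilita} has no nontrivial solution for $\mu\le1$ (\Cref{characteigentwosided}); this case analysis is the technical core. A subsidiary point that needs care is the vanishing of the boundary term in \eqref{intor_secvar2}, which must be traced to the precise definition of admissible variation.
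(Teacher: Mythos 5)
Your plan follows the paper's own route: item (i) is \Cref{scoprirepunticritici}, \Cref{uniquecriticalthm} and \Cref{volbijlength}; item (ii) is \Cref{corollstbofg} combined with the bound $\mu_{\mathcal W_1}>1$ of \Cref{teoremastabilitaperg}; item (iii) is the equivalence of \Cref{stabilityequivalent} (or, as you note, the observation that $\frac{d^2}{dt^2}\A(\mathcal V(\cdot,t))\big|_{t=0}=0$ for area-preserving variations). Two points deserve correction, though neither affects the conclusion. First, your claim that the boundary term $\left[\dot\psi/H^2\right]_0^{2L}$ in \eqref{intor_secvar2} \emph{vanishes} is not justified and is false in general: $X'$ need not have horizontal $s$-derivative at the endpoints (consider a variation lifting the curve near an endpoint at order $t^2$). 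What the paper proves in \Cref{propsecvaroff} is only $\dot\psi(0)\leq 0$ and $\dot\psi(2L)\geq 0$, coming from the fact that $t\mapsto \dot y_t(0)$ and $t\mapsto\dot y_t(2L)$ have an interior minimum, respectively maximum, at $t=0$ because of the one-sided constraint $y_t>0$; this gives $\left[\dot\psi/H^2\right]_0^{2L}\geq 0$, which is all that is needed since the term enters with a favorable sign, so your chain of estimates survives once ``$=$'' is replaced by ``$\geq$''. Second, in item (i) you assert $(H^{-2})''=\lambda-2<0$ as if immediate; the sign of the Lagrange multiplier is not known a priori, and ruling out $\lambda\geq 2$ is a genuine step (Step 2 of \Cref{scoprirepunticritici}, where $\lambda>2$ is excluded by showing it forces $x(2L)-x(0)>0$, and $\lambda=2$ by the tangency condition), so this should be argued rather than stated.
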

	Finally, as an application of \Cref{intro_mainthm}, we prove a local minimization property of equilibrium configurations when compared to sufficiently $C^2$-small normal graphs (\Cref{locminthm}).\\
	
	The paper is organized as follows. In \Cref{sec_preliminaries} we collect some preliminaries about planar curves (\Cref{subsec_Planarcurves}), and we provide the first variation formula for the total inverse mean curvature (\Cref{subsec_firstvarform}). In \Cref{sec_criticalpoints} we characterize critical points. Namely, we introduce admissible curves and variations (\Cref{sec_admissiblecurves}), we perform the Steiner's symmetrization argument (\Cref{sec_steiner}), we find out the equilibrium configurations (\Cref{sec_candidates}), we prove their stationarity (\Cref{sec_station}) and we prove the bijection between their length and their enclosed area (\Cref{sec_prescribedvolume}). In \Cref{sec_sectionstab} we compute the second variation formulas for $\F$ and $\F-\lambda\A$ (\Cref{sec_secvarform}), and we introduce and relate the relevant notions of stability (\Cref{sec_difnotstab}). In \Cref{sec_stability} we show the stability of critical points. Namely, we prove the existence of an optimal constant as in \eqref{intro_sharplowerbound} (\Cref{sec_exminprob}), we deduce \eqref{intro_ELperstabilita} (\Cref{sec_elforstab}), we find the general solution of the first equation in \eqref{intro_ELperstabilita} (\Cref{sec_soluzioniiiiiiiiiii}) and we show that $\mu_{\mathcal W_1}>1$ (\Cref{subsec:stab}). Finally, In \Cref{sec_locmin} we prove the local minimality property. 
	
	\section{Preliminaries}\label{sec_preliminaries}
	In the following, we let $\rr^2_+=\{(x,y)\in\rr^2\,:\,y>0\}$ and $\partial\rr^2_+=\{(x,y)\in\rr^2\,:\,y=0\}.$
	If $(\xi,\eta)\in\rr^2$, we set $(\xi,\eta)^\perp=(\eta,-\xi).$ If $a>0$ and $f:[0,a]\to \rr$, we may denote its derivative by $\dot f$, $f'$, $f^{(1)}$ or $\frac{df}{ds}$. We denote the right derivative of $f$ at $0$ by $\left.\frac{df}{ds}\right|_{s=0^+}$.
	\subsection{Planar curves}\label{subsec_Planarcurves}
	Let $L>0$ be fixed. We say that $\gamma=(x,y)\in C^1\left([0,2L],\rr^2\right)$ is a \emph{regular parametrization}, or a \emph{regular curve}, if $|\dot\gamma(s)|>0$ for any $s\in[0,2L]$. The support of a regular parametrization, $\gamma([0,2L])$, is an immersed curve of class $C^1$. Notice that $\gamma$ is an embedding if and only if it is injective. In the following, we shall identify a regular curve with its parametrization. A curve is \emph{parametrized by arc-length} if $|\dot\gamma|\equiv 1$. The tangent and normal bundles of $\gamma$ are spanned respectively by
	\begin{equation}\label{tangennnormmmm}
		\tau(s):=\frac{\dot\gamma(s)}{|\dot\gamma(s)|}\qquad\text{and}\qquad N(s):=\frac{\dot\gamma(s)^\perp}{|\dot\gamma(s)|}\qquad\text{for any $s\in[0,2L]$.}
	\end{equation}
	The \emph{curvature} of $\gamma$ is defined by
	\begin{equation*}  H(s):=\frac{\big\langle\dot\gamma(s),\ddot\gamma^\perp(s)\big\rangle}{|\dot\gamma(s)|^3}\qquad\text{for any $s\in[0,2L]$.}
	\end{equation*}
	We say that a parametrization is \emph{counterclockwise} if $H(s)\geq 0$ for any $s\in [0,2L]$, and that it is \emph{strictly counterclockwise} if $H(s)> 0$ for any $s\in [0,2L]$. We write $d\gamma$ to denote integration with respect to the $1$-dimensional Hausdorff measure. We recall that
	\begin{equation}\label{ibpformula}
		\int_\gamma \left(f\circ\gamma\right)\,d\gamma=\int_0^{2L}f(s)|\dot\gamma(s)|\,ds\qquad\text{for any $f\in C^1[0,2L]$. }
	\end{equation}
	The following lemma collects some formulas which will be needed in the following. 
	\begin{lemma}\label{lemmaconcontisugammadddot}
		Let $\gamma\in C^3([0,2L],\rr^2)$ be a regular curve parametrized by arc-length. Then, the following holds.
		\begin{equation}\label{dotgammaeddotgamma}
			\ddot \gamma=-H\dot\gamma^\perp\qquad\text{and}\qquad\dddot\gamma=-H^2\dot\gamma-\dot H\dot\gamma^\perp.
		\end{equation}
	\end{lemma}
	\begin{proof}
		As $|\dot\gamma|\equiv 1$, we recall that $\big\langle\dot\gamma,\ddot\gamma\big\rangle=0$ and $H=\big\langle\dot\gamma,\ddot\gamma^\perp\big\rangle$. In particular, $\ddot\gamma=-H\dot\gamma^\perp$. Moreover,
		\begin{equation*}
			\big\langle\dddot\gamma,\dot\gamma\big\rangle=\frac{d}{ds}\big\langle\ddot\gamma,\dot\gamma\big\rangle-|\ddot\gamma|^2=-|\ddot\gamma|^2=-H^2\qquad\text{and}\big\langle\dddot\gamma,\dot\gamma^\perp\big\rangle=\frac{d}{ds}\big\langle\ddot\gamma,\dot\gamma^\perp\big\rangle-\big\langle\ddot\gamma,\ddot\gamma^\perp\big\rangle=-\dot H,
		\end{equation*}
		whence \eqref{dotgammaeddotgamma} follows. 
	\end{proof}
	\subsection{A general first variation formula}\label{subsec_firstvarform}
	Let $S\subseteq\rr^n$ be a smooth, embedded, two-sided hypersurface with boundary $\partial S$. 
	We say that $S$ is \emph{strictly mean-convex} if $H>0$ on $S$, where $H$ is the non-averaged mean curvature of $S$. If $S$ is strictly mean-convex, we denote by
	\begin{equation*}
		\F(S)=\int_S\frac{1}{H}\,dS
	\end{equation*}
	the \emph{inverse total mean curvature} of $S$, where $dS$ denotes integration with respect to the $(n-1)$-dimensional Hausdorff measure. We denote by $\nabla^S$ and $\Delta^S$ the Levi-Civita connection and the Laplace-Beltrami operator of $S$, respectively. When $S$ encloses a bounded region $\Om$, we denote by $N$ its outer unit normal, and by $V(S)$ the Lebesgue measure of $|\Om|$. A smooth map $\mathcal V:\rr^n\times[-t_{\mathcal V},t_{\mathcal V}]\to\rr^n$
	is called a \emph{variation} of $S$ if it satisfies the following properties:
	\medskip
	\begin{enumerate}
		\item[(i)] $\mathcal V(p,0)=p$ for any $p\in S$;
		\medskip
		\item[(ii)] the set $S_t=\mathcal V(S,t)$ is a smooth, embedded, two-sided hypersurface for any $t\in [-t_{\mathcal V},t_{\mathcal V}]$;
		\medskip
		\item[(iii)] $\mathcal V(p,t)=p$ for any $p\in\partial S$ and any $t\in [-t_{\mathcal V},t_{\mathcal V}]$.
	\end{enumerate}
	Here, a variation fixes the boundary. For any $p\in S$ and any $t\in [-t_{\mathcal V},t_{\mathcal V}]$, we define 
	\begin{equation*}
		X(p)=\left.\frac{\partial \mathcal V(p,t)}{\partial t}\right|_{t=0}.
	\end{equation*}
	Clearly, $X\equiv 0$ on $\partial S$. If $S$ encloses a bounded region, then also $S_t$ encloses a bounded region for any sufficiently small $t$.  In this case, it is well-known (cf. \cite{MR4676392}) that 
	\begin{equation}\label{firstvarvol}
		\left. \frac{dV(S_t)}{dt}\right|_{t=0}=\int_S\langle X,N\rangle\,dS.
	\end{equation}
	In particular, when $V(S_t)\equiv V(S)$,
	\begin{equation}\label{vpthenfovp}
		\int_S\langle X,N\rangle\,dS=0.
	\end{equation}
	With respect to the first variation of $\F$, the following holds.
	\begin{proposition}\label{firstvariationofimc}
		Let $S\subseteq\rr^n$ be a smooth, embedded, two-sided, strictly mean-convex hypersurface with boundary $\partial S$. Let $\mathcal V$ be a variation of $S$. Then
		\begin{equation}\label{firsvarformulageneral}
			\left.\frac{d\F(S_t)}{dt}\right|_{t=0}=\int_S\left(\Delta^S(H^{-2})+\frac{|h|^2}{H^2}+1\right)\langle X,N\rangle\,dS+ \int_{\partial S}H^{-2}\langle\nabla^S\langle X,N\rangle,\eta\rangle\,d\partial S,
		\end{equation}
		where $d\partial S$ denotes integration with respect to the $(n-2)$-dimensional Hausdorff measure, $|h|$ is the norm of the second fundamental form of $S$ and $\eta$ is the conormal outward unit vector of $S$.
	\end{proposition}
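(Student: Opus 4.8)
The plan is to compute the first variation of $\F(S_t)=\int_{S_t} H_t^{-1}\,dS_t$ by differentiating under the integral sign at $t=0$, which requires two ingredients: the first variation of the area element $dS_t$ and the first variation of the mean curvature $H_t$. Both are classical (see e.g. \cite{MR4676392}): writing $X=\varphi N+X^{\mathsf T}$ with $\varphi=\langle X,N\rangle$ and $X^{\mathsf T}$ tangential, one has $\left.\partial_t (dS_t)\right|_{t=0}=(\dive X)\,dS=(-H\varphi+\divv_S X^{\mathsf T})\,dS$ and $\left.\partial_t H_t\right|_{t=0}=-\Delta^S\varphi-|h|^2\varphi+X^{\mathsf T}(H)$, where $\divv_S$ is the tangential divergence and $|h|^2$ the squared norm of the second fundamental form. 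Hence, since $\partial_t(H_t^{-1})=-H_t^{-2}\partial_tH_t$,
\begin{equation*}
\left.\frac{d\F(S_t)}{dt}\right|_{t=0}=\int_S \frac{1}{H^2}\big(\Delta^S\varphi+|h|^2\varphi-X^{\mathsf T}(H)\big)\,dS+\int_S\frac{1}{H}\big(-H\varphi+\divv_S X^{\mathsf T}\big)\,dS.
\end{equation*}

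Next I would separate the tangential and normal contributions and show the tangential part reorganizes into the stated form. For the tangential terms, integrate by parts on $S$: $\int_S \frac{1}{H^2}\divv_S X^{\mathsf T}\,dS=-\int_S \langle\nabla^S(H^{-2}),X^{\mathsf T}\rangle\,dS+\int_{\partial S}H^{-2}\langle X^{\mathsf T},\eta\rangle\,d\partial S$, and the boundary term vanishes because $X\equiv 0$ on $\partial S$. But $\langle\nabla^S(H^{-2}),X^{\mathsf T}\rangle=X^{\mathsf T}(H^{-2})=-\tfrac{2}{H^3}X^{\mathsf T}(H)$, so $-\int_S\langle\nabla^S(H^{-2}),X^{\mathsf T}\rangle\,dS=\int_S\frac{2}{H^3}X^{\mathsf T}(H)\,dS$, which exactly cancels the $-\int_S H^{-2}X^{\mathsf T}(H)\,dS$ term only up to a factor — more precisely the two tangential $X^{\mathsf T}(H)$ contributions combine to $\int_S(-H^{-2}+2H^{-3}\cdot\tfrac{?}{})$; the cleaner route is to note directly that $\divv_S(H^{-1}X^{\mathsf T})=\langle\nabla^S(H^{-1}),X^{\mathsf T}\rangle+H^{-1}\divv_S X^{\mathsf T}$, so that the full tangential piece $\int_S\big(\frac{1}{H}\divv_S X^{\mathsf T}-\frac{1}{H^2}X^{\mathsf T}(H)\big)\,dS=\int_S\divv_S(H^{-1}X^{\mathsf T})\,dS=\int_{\partial S}H^{-1}\langle X^{\mathsf T},\eta\rangle\,d\partial S=0$ by the divergence theorem on $S$ and $X|_{\partial S}=0$. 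Thus all tangential terms drop out, and
\begin{equation*}
\left.\frac{d\F(S_t)}{dt}\right|_{t=0}=\int_S \Big(\frac{\Delta^S\varphi}{H^2}+\frac{|h|^2}{H^2}\varphi-\varphi\Big)\,dS.
\end{equation*}

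Finally I would move the Laplacian off $\varphi$ and onto $H^{-2}$ using Green's second identity on the manifold with boundary $S$: $\int_S\big(\frac{1}{H^2}\Delta^S\varphi-\varphi\,\Delta^S(H^{-2})\big)\,dS=\int_{\partial S}\big(\frac{1}{H^2}\langle\nabla^S\varphi,\eta\rangle-\varphi\langle\nabla^S(H^{-2}),\eta\rangle\big)\,d\partial S$, and the second boundary term vanishes since $\varphi=\langle X,N\rangle\equiv 0$ on $\partial S$. Substituting $\int_S\frac{1}{H^2}\Delta^S\varphi\,dS=\int_S\varphi\,\Delta^S(H^{-2})\,dS+\int_{\partial S}H^{-2}\langle\nabla^S\varphi,\eta\rangle\,d\partial S$ yields exactly \eqref{firsvarformulageneral} with $\varphi=\langle X,N\rangle$. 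The only genuinely delicate point is the correct sign and form of the first variation of the mean curvature $H_t$ (the Jacobi-type operator $\Delta^S+|h|^2$ acting on the normal component, plus the tangential derivative term), which must be quoted carefully from the literature; everything else is integration by parts on $S$ together with the boundary condition $X|_{\partial S}=0$. I would also remark that regularity of $S$ (smoothness) ensures all the integrations by parts and the differentiation under the integral are justified, the latter via dominated convergence on the compact $S$.
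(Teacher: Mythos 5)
Your overall strategy is the same as the paper's: differentiate under the integral using the known first variations of the area element and of $H$, absorb the tangential contributions into $\divv^S(H^{-1}X^{T})$ and discard them via the divergence theorem because $X\equiv 0$ on $\partial S$, then move $\Delta^S$ from $\langle X,N\rangle$ onto $H^{-2}$ (your Green identity is exactly the paper's double integration by parts, and the surviving boundary term $\int_{\partial S}H^{-2}\langle\nabla^S\langle X,N\rangle,\eta\rangle$ appears for the same reason). So the architecture is fine.

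There is, however, a genuine sign inconsistency that prevents your computation from landing on the stated formula. The two variational formulas you quote belong to opposite sign conventions for $H$. If $H=\divv^S N$ (the convention under which the quoted evolution $\left.\partial_t H\right|_{t=0}=-\Delta^S\varphi-|h|^2\varphi+X^{T}H$ is correct — check it on an expanding sphere — and the convention the paper uses, consistently with $\ddot\gamma=-H\dot\gamma^\perp$ and $N$ the outer normal), then the normal part of the area-element variation is $\divv^S(\varphi N)=+H\varphi$, not $-H\varphi$. With your mixed choice the zeroth-order terms combine to $-\varphi$ instead of $+\varphi$, and your intermediate identity
\begin{equation*}
\left.\frac{d\F(S_t)}{dt}\right|_{t=0}=\int_S\Big(\frac{\Delta^S\varphi}{H^2}+\frac{|h|^2}{H^2}\varphi-\varphi\Big)\,dS
\end{equation*}
leads, after the Green identity, to $\int_S\big(\Delta^S(H^{-2})+\tfrac{|h|^2}{H^2}-1\big)\langle X,N\rangle\,dS$ plus the boundary term, i.e. $-1$ where the proposition has $+1$; the assertion that this "yields exactly" \eqref{firsvarformulageneral} is therefore false as written. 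This is not cosmetic for this paper: the $+1$ is what produces the $+2$ in the curve case and hence the Euler--Lagrange equation $2+\Delta^\gamma(H^{-2})=\lambda$ used throughout. The fix is simply to use one consistent convention, e.g. $\left.\partial_t(dS_t)\right|_{t=0}=(H\varphi+\divv^S X^{T})\,dS$ together with the quoted formula for $\partial_t H$; the rest of your argument then goes through verbatim and coincides with the paper's proof.
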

	\begin{proof}
		We know by \cite[Lemma 1.26]{MR4676392} that 
		\begin{equation}\label{derivativeofHinfirstgeneral}
			\left. \frac{\partial H(\mathcal V(\cdot,t))}{\partial t}\right|_{t=0}=X^TH-\Delta^S\langle X,N\rangle-|h|^2\langle X,N\rangle,
		\end{equation}
		where $X^T=X-\langle X,N\rangle N$. Denote by $\jac(\mathcal V(\cdot,t)|_S)$ the Jacobian of $\mathcal V(\cdot,t)|_S$. Recall (cf. \cite{MR4676392}) that 
		\begin{equation}\label{derijacgenfirstvar}
			\left. \frac{\partial \jac(\mathcal V(\cdot,t)|_S)}{\partial t}\right|_{t=0}=\divv^S X,
		\end{equation}
		where $\divv ^S$ is the tangential divergence of $S$.
		The area formula and the divergence theorem yield
		\begin{equation*}
			\begin{split}
				\left.\frac{d\F(S_t)}{dt}\right|_{t=0}&= \left.\frac{d}{dt}\right|_{t=0}\int_S\frac{\jac(\mathcal V(\cdot,t)|_S)}{H(\mathcal V(\cdot,t))}\,dS\\
				&=\int_S\left(\left.\frac{\partial(H(\mathcal V(\cdot,t))^{-1})}{\partial t}\right|_{t=0}+\frac{1}{H}\left.\frac{\partial (\jac(\mathcal V(\cdot,t)|_S))}{\partial t}\right|_{t=0}\right)\,dS\\
				&\overset{\eqref{derijacgenfirstvar}}{=}
				\int_S\left(-\frac{1}{H^2}\left.\frac{\partial(H(\mathcal V(\cdot,t))}{\partial t}\right|_{t=0}+\frac{1}{H}\divv^S X\right)\,dS\\
				&\overset{\eqref{derivativeofHinfirstgeneral}}{=}
				\int_S\left(-\frac{1}{H^2}X^TH+\frac{1}{H^2}\Delta^S\langle X,N\rangle+\frac{|h|^2}{H^2}\langle X,N\rangle+\frac{1}{H}\divv^S X\right)\,dS\\
				&=
				\int_S\left(\divv^S\left(H^{-2}\nabla^S\langle X,N\rangle\right)-\langle\nabla^S(H^{-2}),\nabla^S\langle X,N\rangle\rangle+\frac{|h|^2}{H^2}\langle X,N\rangle\right)\,dS\\
				&\quad+
				\int_S\left(X^T(H^{-1})+H^{-1}\divv^S X^T+H^{-1}\divv^S \left(\langle X,N\rangle N\right)\right)\,dS\\
				&=
				\int_{\partial S}H^{-2}\langle\nabla^S\langle X,N\rangle,\eta\rangle\,d\partial S-\int_S\divv^S\left(\langle X,N\rangle\nabla^S(H^{-2})\right)\,dS\\
				&\qquad+\int_S\left(\Delta^S(H^{-2})+\frac{|h|^2}{H^2}+1\right)\langle X,N\rangle\,dS+\int_S\divv^S\left(H^{-1}X^T\right)\,dS\\
				&=\int_S\left(\Delta^S(H^{-2})+\frac{|h|^2}{H^2}+1\right)\langle X,N\rangle\,dS+ \int_{\partial S}H^{-2}\langle\nabla^S\langle X,N\rangle,\eta\rangle\,d\partial S. \qedhere
			\end{split}
		\end{equation*}
	\end{proof}
	
	\begin{remark}
		Recall that, when $n=2$, $|h|^2=H^2$. In particular, if $\gamma:[0,2L]\to\rr^2$ is a a smooth, embedded curve parametrized by arc-length and with $H>0$, then \eqref{firsvarformulageneral} reads as
		\begin{equation*}
			\left.\frac{d\F(\gamma_t)}{dt}\right|_{t=0}=\int_0^{2L}\left(\frac{d^2\left(H^{-2}\right)}{ds^2}+2\right)\langle X,N\rangle\,ds+ H^{-2}(2L)\left.\frac{d\langle X,N\rangle}{ds}\right|_{s=2L}-H^{-2}(0)\left.\frac{d\langle X,N\rangle}{ds}\right|_{s=0}.
		\end{equation*}
	\end{remark}
	\section{Critical points}\label{sec_criticalpoints}
	\subsection{Admissible curves and variations}\label{sec_admissiblecurves}
	Let $x_0>0$ and $L>0$ be fixed.
	We say that a curve $\gamma=(x,y)$ is \emph{admissible} if it satisfies the following properties:
	\medskip
	\begin{enumerate}
		\item [(i)]$\gamma\in C^\infty([0,2L],\rr^2)$ is a regular, strictly counterclockwise embedding;
		\medskip
		\item [(ii)]$\gamma(0)=(x_0,0)$ and $\gamma(2L)=(-x_0,0)$;
		\medskip
		\item [(iii)]$y(s)>0$ for any $s\in(0,2L)$.
	\end{enumerate}
	\medskip
	Any admissible curve encloses a bounded, simply connected, open region in $\rr^2_+$, that we denote by $\Omega$. The boundary of $\Omega$, $\partial\Om$, is the union of $\gamma$ with the segment $\left(-x_0,x_0\right)\times\{0\}$. As $\gamma$ is counterclockwise, then $N$ defined as in \eqref{tangennnormmmm} is the outer unit normal to $\Om\cap \rr^2_+$, and $H(s)$ is the curvature of $\partial\Om$ at $\gamma(s)$. We denote by $\A(\gamma)$ the area of $\Omega$. The divergence theorem implies that
	\begin{equation}\label{volumediomegaincurva}
		\A(\gamma)=\int_{0}^{2L}x(s)\dot y(s)\,ds=\frac{1}{2}\int_{0}^{2L}\left(x(s)\dot y(s)-\dot x(s) y(s)\right)\,ds.
	\end{equation}
	Finally, we say that an admissible curve is \emph{vertically symmetric} if it is symmetric with respect to the vertical axis.        
	The notion of admissible curve carries a natural notion of admissible variation. Precisely, a smooth map $\mathcal V:[0,2L]\times[-t_{\mathcal V},t_{\mathcal V}]\to\rr^2$ is called an \emph{admissible variation} of $\gamma$ if the following properties hold:
	\medskip
	\begin{enumerate}
		\item[(i)] $\mathcal V(\cdot,0)=\gamma$;
		\medskip
		\item[(ii)] $\gamma_t=(x_t,y_t):=\mathcal V(\cdot,t)$ is an admissible curve for any $t\in[-t_{\mathcal V},t_{\mathcal V}].$
		\medskip
	\end{enumerate}
	Let $\mathcal V(s,t)$ be an admissible variation.  We define the \emph{velocity} and \emph{acceleration} of $\mathcal V$ repectively by
	\begin{equation*}
		X(s)=\left.\frac{\partial \mathcal V(s,t)}{\partial t}\right|_{t=0},\qquad X'(s)=\left.\frac{\partial^2 \mathcal V(s,t)}{\partial t^2}\right|_{t=0} \qquad\text{for any $s\in[0,2L]$.}
	\end{equation*}
	We say that a variation is:
	\medskip
	\begin{enumerate}
		\item [(i)] \emph{normal} if $X(s)=\langle X(s),N(s)\rangle N(s)$ for any $s\in[0,2L]$;
		\medskip
		\item [(ii)]  \emph{geodesic} if $\mathcal V(s,t)=\gamma(s)+t X(s)$;
		\medskip
		\item [(iii)] \emph{area-preserving} if $\A(\gamma_t)=\A(\gamma)$ for any $t$;
		\medskip
		\item [(iv)] \emph{first-order area-preserving} if $\int_{\partial\Om}\langle X,N\rangle=0.$
		\medskip
	\end{enumerate}
	If in the above definitions the interval $[-t_{\mathcal V},t_{\mathcal V}]$ is replaced by $[0,t_{\mathcal V}]$, we call $\mathcal V$ a \emph{one-sided admissible variation}. One-sided variations are motivated by the fact that admissible curves must lie in the upper half-plane. 
	We introduce two relevant classes of admissible geodesic variations, which will suffice to characterize critical points of $\F$  under area-preserving variations. We need the following lemma.
	\begin{lemma}\label{lem}
		Let $\gamma$ be an admissible curve parametrized by arc-length. Let $t_{\mathcal V}>0$. Set
		\begin{equation}\label{variation}
			\mathcal V(s,t)=\gamma_t(s)=\gamma(s)+tX(s)\qquad\text{$s\in[0,2L]$, $t\in[-t_{\mathcal V},t_{\mathcal V}]$},
		\end{equation}
		where $X\in C^\infty([0,2L],R^2)$ satisfies $X(0)=X(2L)=0$. If $t_{\mathcal V}>0$ is sufficiently small, $\mathcal V$ is an admissible variation provided that $y_t>0$ on $(0,2L)$ for any $t\in [-t_{\mathcal V},t_{\mathcal V}]$.
	\end{lemma}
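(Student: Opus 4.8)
The plan is to verify, one at a time, the two defining conditions of an admissible variation. Condition (i), $\mathcal V(\cdot,0)=\gamma$, is immediate from \eqref{variation}, as is the $C^\infty$ regularity of $\mathcal V$ and of every $\gamma_t$, since the map is affine in $t$ with smooth coefficients. For condition (ii) I must show that each $\gamma_t$ is an admissible curve: the boundary values $\gamma_t(0)=(x_0,0)$ and $\gamma_t(2L)=(-x_0,0)$ follow at once from $X(0)=X(2L)=0$, and the positivity $y_t>0$ on $(0,2L)$ is exactly the standing hypothesis. So the whole statement reduces to proving that, for $t_{\mathcal V}$ small enough (depending on $\gamma$ and on $\|X\|_{C^2}$), $\gamma_t$ is a regular, strictly counterclockwise embedding.

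Regularity and the sign of the curvature I would handle with a uniform-continuity argument on the compact interval $[0,2L]$. Since $|\dot\gamma|\equiv1$ and $\dot X$ is bounded, for $|t|\le t_{\mathcal V}$ one has $|\dot\gamma_t(s)|\ge 1-t_{\mathcal V}\|\dot X\|_\infty$, which is positive, and bounded below uniformly in $s$ and $t$, once $t_{\mathcal V}<\|\dot X\|_\infty^{-1}$; this gives regularity. Writing the curvature of $\gamma_t$ as $H_t=\langle\dot\gamma_t,\ddot\gamma_t^\perp\rangle\,|\dot\gamma_t|^{-3}$ with $\dot\gamma_t=\dot\gamma+t\dot X$ and $\ddot\gamma_t=\ddot\gamma+t\ddot X$, both numerator and denominator converge to their $t=0$ counterparts uniformly in $s$ as $t\to0$, so $H_t\to H$ uniformly on $[0,2L]$. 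Since $\gamma$ is admissible, $\min_{[0,2L]}H>0$, hence $H_t>0$ everywhere once $t_{\mathcal V}$ is small. Thus $\gamma_t$ is a regular, strictly counterclockwise curve for $t$ small.

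The hard part is the injectivity of $\gamma_t$, i.e. that $\gamma_t$ is an embedding; here I would argue by contradiction. If no admissible $t_{\mathcal V}$ existed there would be $t_k\to0$ and $s_k\ne s_k'$ in $[0,2L]$ with $\gamma_{t_k}(s_k)=\gamma_{t_k}(s_k')$. By compactness, up to subsequences $s_k\to s_*$ and $s_k'\to s_*'$; since $\gamma_{t_k}\to\gamma$ uniformly, passing to the limit gives $\gamma(s_*)=\gamma(s_*')$, so $s_*=s_*'$ because $\gamma$ is an embedding. Now I would use the standard local fact that a $C^1$ immersion is locally bi-Lipschitz: there are $\delta>0$ and $c>0$ with $|\gamma(s)-\gamma(s')|\ge c|s-s'|$ whenever $|s-s_*|,|s'-s_*|\le\delta$. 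Since $\gamma_{t_k}\to\gamma$ in $C^1$, the same inequality holds for $\gamma_{t_k}$ with constant $c/2$ on that neighborhood for $k$ large; but for $k$ large $s_k,s_k'$ both lie in $[s_*-\delta,s_*+\delta]$, contradicting $\gamma_{t_k}(s_k)=\gamma_{t_k}(s_k')$. Equivalently, one may simply invoke that $C^1$ embeddings of a compact manifold with boundary form an open subset of $C^1([0,2L],\rr^2)$. Separating this near-diagonal behavior, governed by the immersion property, from the global behavior, governed by injectivity of the limit, is where the real content lies; everything else is bookkeeping. Once injectivity is in hand, all the requirements for $\gamma_t$ to be an admissible curve are met, hence $\mathcal V$ is an admissible variation, which concludes the argument.
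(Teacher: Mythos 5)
Your argument is correct and follows essentially the same route as the paper: boundary values from $X(0)=X(2L)=0$, uniform smallness of the perturbation giving $|\dot\gamma_t|>0$ and $H_t>0$ on the compact interval, and persistence of the embedding property for small $t$. The only difference is that the paper simply asserts the last point ("since $\gamma$ is an embedding, $\gamma_t$ is an embedding for small $t$"), whereas you supply the standard compactness/local bi-Lipschitz justification — a welcome filling-in of detail, not a change of method.
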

	\begin{proof}
		Notice that $\mathcal V(0,t)=(x_0,0)$ and $\mathcal V(2L,t)=(-x_0,0)$. Moreover,
		\begin{equation}\label{gammadotcomp}
			|\dot\gamma_t|^2=\langle\dot \gamma+t\dot X,\dot \gamma+t\dot X\rangle=1+2t\langle\dot X,\dot \gamma\rangle+t^2|\dot X|^2
		\end{equation}
		and
		\begin{equation}\label{bohnonmiricordo}
			\langle \dot\gamma_t,\ddot\gamma_t^\perp\rangle=\langle\dot \gamma+t\dot X,\ddot \gamma^\perp+t\ddot X^\perp\rangle=H+t\left(\langle\dot\gamma,\ddot X^\perp\rangle+\langle\ddot\gamma^\perp,\dot X\rangle\right)+t^2\langle\dot X,\ddot X^\perp\rangle.
		\end{equation}
		In particular, if $t$ is sufficiently small, $\gamma_t$ is a regular, smooth, strictly counterclockwise parametrization. 
		Since $\gamma$ is an embedding, then $\gamma_t$ is an embedding when $t$ is small. 
		Therefore, $\mathcal V$ is an admissible variation provided that $y_t>0$ on $(0,2L)$.
	\end{proof}
	
	In the above generality, it may happen that an arbitrary choice of $X$ leads to non-admissible variations. There are two relevant choices that lead to admissible variations.
	\begin{example}\label{comptsuppareadmissible}
		Choose $X\in C^\infty_c(0,2L)$. Then $\mathcal V$ is admissible. By the above considerations, it suffices to show that $y_t> 0$ on $(0,2L)$. If $X\equiv 0$ the claim is trivial. Assume $\|X\|_{L^\infty([0,2L],\rr^2)}>0$. Let $\eps>0$ be such that $\supp X\subseteq[\eps,2L-\eps]$. Since $y$ is continuous and positive on  $(0,2L)$, there exists $c_3>0$ such that $y(s)\geq c_3$ for any $s\in [\eps,2L-\eps]$. Therefore
		\begin{equation*}
			y_t=y+tX\geq c_3-|t|\|X\|_{L^\infty([0,2L],\rr^2)},
		\end{equation*}
		and the claim follows when $t$ is small.
	\end{example}
	\begin{example}\label{verticalvar}
		Choose $X\equiv(f,0)$ for any $f\in C^2[0,2L]$ such that $f(0)=f(2L)=0$. Then $y_t=y>0$ on $(0,2L)$, and $\mathcal V$ is trivially admissible. 
	\end{example}
	Following \cite{MR731682}, we can construct area-preserving variations sufficiently close to first-order area-preserving variations. 
	Although the following result is essentially \cite[Lemma 2.1]{MR731682}, we prove it for the sake of completeness.
	\begin{lemma}\label{docarmo}
		Fix $x_0>0$ and $L>0$. Let $\gamma:[0,2L]\to\rr^2$ be an admissible curve parametrized by arc-length. Let $\mathcal V$ be a first-order area-preserving admissible variation of $\gamma$. Denote by $X$ and $X'$ its velocity and acceleration respectively.  There exists an area-preserving admissible variation $ \mathcal {\tilde V}$, with velocity $\tilde X$ and acceleration $\tilde X'$, such that 
		\begin{equation}\label{docarmopropultimateequation}
			\tilde X=X\text{ on $[0,2L]$}\qquad\text{and}\qquad\tilde X'=X'\text{ on }\left[0,\frac{L}{2}\right]\cup\left[\frac{3L}{2},2L\right].
		\end{equation}
	\end{lemma}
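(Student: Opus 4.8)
\emph{Proof plan.} The strategy is the classical correction argument of Barbosa--do Carmo \cite[Lemma 2.1]{MR731682}: keep the given variation $\mathcal V$ and superimpose on it a small deformation, supported strictly inside $(L/2,3L/2)$, tuned via the implicit function theorem so that the composite variation has \emph{exactly} constant enclosed area, while changing neither the velocity nor the acceleration outside the support of the correction.

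First I would fix the correcting field. By \Cref{comptsuppareadmissible} any $Y\in C^\infty_c(L/2,3L/2)$ generates an admissible geodesic variation; choose in particular $\phi\in C^\infty_c(L/2,3L/2)$ with $\phi\geq 0$ and $\int_0^{2L}\phi\,ds=1$, and set $Y=\phi N$, so that $\langle Y,N\rangle=\phi$ and hence $\int_0^{2L}\langle Y,N\rangle\,ds=1$; note $Y\equiv 0$ on $[0,L/2]\cup[3L/2,2L]$ and near the endpoints. Then consider the smooth two-parameter map $\Phi(s,t,\mu)=\mathcal V(s,t)+\mu\,Y(s)$ and the smooth function $g(t,\mu)=\A\big(\Phi(\cdot,t,\mu)\big)$, defined via \eqref{volumediomegaincurva}, with $g(0,0)=\A(\gamma)$. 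Since $\Phi(\cdot,0,\mu)=\gamma+\mu Y$ is exactly the geodesic variation of $\gamma$ with velocity $Y$, differentiating \eqref{volumediomegaincurva} (equivalently, using \eqref{firstvarvol}) gives $\partial_\mu g(0,0)=\int_0^{2L}\langle Y,N\rangle\,ds=1\neq 0$. The implicit function theorem then produces $t_{\tilde{\mathcal V}}\in(0,t_{\mathcal V}]$ and a smooth $\mu\colon[-t_{\tilde{\mathcal V}},t_{\tilde{\mathcal V}}]\to\rr$ with $\mu(0)=0$ and $g(t,\mu(t))\equiv\A(\gamma)$. I then \emph{define} $\tilde{\mathcal V}(s,t)=\mathcal V(s,t)+\mu(t)Y(s)$.

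It remains to verify three things. First, \emph{admissibility}: one has $\tilde{\mathcal V}(\cdot,0)=\gamma$, and $\tilde{\mathcal V}(0,t)=\gamma(0)$, $\tilde{\mathcal V}(2L,t)=\gamma(2L)$ because $Y$ vanishes near the endpoints; since $\mu(t)\to 0$ as $t\to 0$ and $\gamma_t=\mathcal V(\cdot,t)$ is, for each $t$, a regular strictly counterclockwise embedding, the smallness arguments already carried out in \Cref{lem} and \Cref{comptsuppareadmissible} — together with a uniform positive lower bound for $y_t$ on the compact set $\supp Y\subset(0,2L)$, and $\tilde y_t=y_t$ near the endpoints — show that, after shrinking $t_{\tilde{\mathcal V}}$ if necessary, $\tilde{\mathcal V}(\cdot,t)$ is admissible; and it is area-preserving by the choice of $\mu$. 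Second, \emph{the velocity is unchanged}: differentiating $g(t,\mu(t))\equiv\A(\gamma)$ at $t=0$ yields $\partial_t g(0,0)+\partial_\mu g(0,0)\mu'(0)=0$, and $\partial_t g(0,0)=\tfrac{d}{dt}\big|_{t=0}\A(\mathcal V(\cdot,t))=\int_0^{2L}\langle X,N\rangle\,ds=0$ precisely because $\mathcal V$ is first-order area-preserving; hence $\mu'(0)=0$, and since $Y$ does not depend on $t$, $\tilde X=X+\mu'(0)Y=X$ on $[0,2L]$. Third, \emph{the acceleration is unchanged off the middle region}: $\tilde X'=X'+\mu''(0)Y$, and $Y\equiv 0$ on $[0,L/2]\cup[3L/2,2L]$, so $\tilde X'=X'$ there no matter the value of $\mu''(0)$. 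This establishes \eqref{docarmopropultimateequation}.

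All the computations involved are routine; the only point that deserves a little care is propagating the admissibility of $\mathcal V$ through the small perturbation $\mu(t)Y$ in the first verification, which is nonetheless handled exactly as in the already-established \Cref{lem} and \Cref{comptsuppareadmissible}.
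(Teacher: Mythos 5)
Your proof is correct and takes essentially the same route as the paper: a normal correction field compactly supported in the middle interval, the implicit function theorem applied to the (smooth) area functional, and the observation that $\mu'(0)=0$ follows from $\mathcal V$ being first-order area-preserving, so the velocity is untouched while the acceleration is only modified where the correction is supported. The only difference is that you superimpose the correction on the original variation $\mathcal V(\cdot,t)$ whereas the paper perturbs the geodesic variation $\gamma+tX+g(t)\hat\psi N$; this is a harmless variant (if anything, it makes the identity $\tilde X'=X'$ on $\left[0,\frac{L}{2}\right]\cup\left[\frac{3L}{2},2L\right]$ immediate for an arbitrary acceleration $X'$), and your admissibility check is at the same level of detail as the paper's.
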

	\begin{proof}
		Denote by $\Om$ the open set enclosed by $\gamma$.  Let $\hat\psi\in C^\infty_c\left[\frac{L}{2},\frac{3L}{2}\right]$ be such that 
		$\int_0^{2L}\hat\psi\,ds\neq 0$. For $t,u$ small, set $\mathcal V(\cdot,t,u)=\gamma+t X+u\hat\psi N$. Notice that
		\begin{equation*}
			\A\left(\mathcal V(\cdot,t,u)\right)=\frac{1}{2}\int_{0,2L}\left\langle\gamma+t X+u\hat\psi N,N+t\dot X^\perp+u\frac{d}{ds}\left(\hat\psi N^\perp\right)\right\rangle\,ds,
		\end{equation*}
		whence
		$\A\left(\mathcal V(\cdot,0,0)\right)=|\Om|$. Moreover, recalling that $X(0)=X(2L)=0$ and since $\mathcal V$ is first-order area-preserving,
		\begin{equation*}
			\left.\frac{\partial \A(\mathcal V(\cdot,t,u))}{\partial t}\right|_{(t,u)=(0,0)}=\int_0^{2L}\langle X,N\rangle\,ds=0  
		\end{equation*}
		and 
		\begin{equation*}
			\left.\frac{\partial \A(\mathcal V(\cdot,t,u))}{\partial u}\right|_{(t,u)=(0,0)}=\int_0^{2L}\hat \psi\,ds\neq 0.
		\end{equation*}
		Therefore, the implicit function theorem implies the existence of $\delta>0$ and $g\in C^\infty[-\delta,\delta]$ such that $g(0)=0$, $\dot g(0)=0$ and $\A(\mathcal V(\cdot,t,g(t)))=|\Om|$ for any $t\in[-\delta,\delta]$. Set $ \mathcal {\tilde V}(\cdot, t)=\gamma+tX+g(t)\hat\psi N$. By construction, $\mathcal {\tilde V}$ is area-preserving. Moreover, \eqref{docarmopropultimateequation} holds. Te proof that $ \mathcal {\tilde V}$ is admissible, up to choosing $t$ small, follows arguing as in \Cref{comptsuppareadmissible}.
	\end{proof}
	\subsection{Total inverse curvature under Steiner's symmetrization}\label{sec_steiner}
	In the following, we prove that we can reduce to consider vertically symmetric critical points. 
	Let $\gamma$ be an admissible curve. Denote by $H_\gamma(x,y)$ its curvature at $(x,y)$. We denote by $\Om$ the bounded domain enclosed by $\gamma$ and set $$\bar y=\max\{y\in\rr\,:\,(x,y)\in\overline \Om\}.$$ Since $\gamma$ is admissible, $\Om$ is convex. Therefore, there exist two functions $f,g\in C^{\infty}(0,\bar y)$ such that $f$ is convex, $g$ is concave and 
	\begin{equation*}
		\Om=\{(x,y)\in\rr^2\,f(y)< x< g(y),\,0<y<\bar y\}.
	\end{equation*}
	We define the \emph{Steiner's symmetrization} $\tilde\Om$ of $\Om$ by
	\begin{equation*}
		\tilde\Om=\left\{(x,y)\in\rr^2,\,-h(y)< x< h(y),\,0<y<\bar y\right\},\qquad\text{ where $h=\frac{g-f}{2}$. }
	\end{equation*}
	Notice that $-h$ is convex and $h$ is concave, whence $\tilde\Om$ is convex. Let us consider a regular parametrization $\tilde\gamma$ of $\partial\Om\cap\rr^2_+$. By construction, $\tilde\gamma$ is admissible provided that is strictly counterclockwise. 
	To this aim, let $(x,y)\in\gamma$. We assume first that $x>0$. Being $\tilde \gamma$ a graph over the vertical axis, 
	\begin{equation*}
		H_{\tilde \gamma}(x,y)=\frac{\Ddot f(y)-\Ddot g(y)}{2\left(1+\left(\frac{\Dot f(y)-\Dot g(y)}{2}\right)^2\right)^{\frac{3}{2}}}\geq\min\left\{  \frac{\Ddot f(y)}{(1+\Dot f(y)^2)^{\frac{3}{2}}},\frac{-\Ddot g(y)}{\left(1+\Dot g(y)^2\right)^{\frac{3}{2}}}\right\}=\min\{H_{\gamma}(-x,y),H_{\gamma}(x,y)\},
	\end{equation*}
	and in particular $H_{\tilde \gamma}(0,\bar y)\geq H_\gamma(0,\bar y)$.
	Arguing similarly when $x<0$, we conclude that $H_{\tilde\gamma}>0$, whence $\tilde\gamma$ is an admissible curve. Moreover, it is vertically symmetric by construction.
	We show that Steiner's symmetrization preserves the volume and decreases the total inverse curvature.
	
	\begin{proposition}
		Fix $x_0>0$. Let $\gamma$ be an admissible curve. Let $\tilde\gamma$ be its Steiner's symmetrization. Then $\A(\tilde\gamma)=\A(\gamma)$. Moreover, $\F(\tilde \gamma)< \F(\gamma)$ unless $\tilde \gamma=\gamma$.
	\end{proposition}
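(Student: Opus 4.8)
The plan is to recast both $\A$ and $\F$ as one–dimensional integrals over the height variable $y\in(0,\bar y)$ and then reduce matters to a pointwise algebraic inequality. I use throughout the representations $\Om=\{f(y)<x<g(y),\ 0<y<\bar y\}$ and $\tilde\Om=\{-h(y)<x<h(y),\ 0<y<\bar y\}$ recalled above, where $h=\tfrac{g-f}{2}$. Since $\gamma$ is strictly counterclockwise one has $f''>0$, $-g''>0$ and $g-f>0$ on $(0,\bar y)$, while the endpoints of $\gamma$ force $g(0^+)=x_0$ and $f(0^+)=-x_0$. The area identity is then immediate from Fubini's theorem: $\A(\gamma)=|\Om|=\int_0^{\bar y}(g-f)\,dy=\int_0^{\bar y}2h\,dy=|\tilde\Om|=\A(\tilde\gamma)$.

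For the total inverse curvature I would first note that the right and left portions of $\gamma$ are exactly the smooth graphs $x=g(y)$ and $x=f(y)$ over $(0,\bar y)$. Parametrizing each by $y$ with the orientation induced by $\gamma$, the curvature reads $-g''(1+g'^2)^{-3/2}$, resp.\ $f''(1+f'^2)^{-3/2}$, and $d\gamma=\sqrt{1+(\cdot)'^2}\,dy$, so that
\[
\F(\gamma)=\int_0^{\bar y}\left(\frac{(1+g'^2)^2}{-g''}+\frac{(1+f'^2)^2}{f''}\right)dy .
\]
Since the two halves $x=\pm h(y)$ of $\tilde\gamma$ contribute equally, the same computation gives
\[
\F(\tilde\gamma)=2\int_0^{\bar y}\frac{(1+h'^2)^2}{-h''}\,dy
=\int_0^{\bar y}\frac{4\bigl(1+\bigl(\tfrac{g'-f'}{2}\bigr)^2\bigr)^2}{f''-g''}\,dy .
\]
Both integrals converge, possibly as improper integrals at $y=0$ and $y=\bar y$, because $H$ is continuous and positive on the compact parameter intervals, whence $\F(\gamma)<\infty$ and $\F(\tilde\gamma)<\infty$.

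The crux is the pointwise inequality between the two integrands. Fixing $y$ and writing $a=f''(y)>0$, $b=-g''(y)>0$, $p=f'(y)$, $q=g'(y)$, this amounts to
\[
\frac{4\bigl(1+\tfrac{(p-q)^2}{4}\bigr)^2}{a+b}\ \le\ \frac{(1+p^2)^2}{a}+\frac{(1+q^2)^2}{b},
\]
which I would establish in two elementary steps: the Cauchy--Schwarz inequality in Engel form gives $\frac{(1+p^2)^2}{a}+\frac{(1+q^2)^2}{b}\ge \frac{\bigl((1+p^2)+(1+q^2)\bigr)^2}{a+b}$, and $(1+p^2)+(1+q^2)=2+p^2+q^2\ge 2+\tfrac{(p-q)^2}{2}=2\bigl(1+\tfrac{(p-q)^2}{4}\bigr)$ because the difference equals $\tfrac{(p+q)^2}{2}\ge 0$. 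Integrating over $(0,\bar y)$ yields $\F(\tilde\gamma)\le\F(\gamma)$. For the strict inequality, if $\F(\tilde\gamma)=\F(\gamma)$ then equality must hold a.e.\ in the pointwise estimate; in particular $2+p^2+q^2=2(1+\tfrac{(p-q)^2}{4})$ a.e., i.e.\ $f'+g'\equiv 0$ on $(0,\bar y)$ by continuity, so $f=-g+C$, and $f(0^+)=-x_0=-g(0^+)$ forces $C=0$. Then $f=-g$, that is $\Om=\tilde\Om$ and $\gamma=\tilde\gamma$.

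The only genuinely delicate point is the choice of comparison: comparing $1/H$ pointwise via the curvature inequality already established in the text is \emph{not} enough, since the arc–length weights $\sqrt{1+(\cdot)'^2}$ also differ; the right move is to pass to height integrals, where curvature and arc length combine into the single expression above and Cauchy--Schwarz in Engel form handles both simultaneously. Everything else — the area computation, the passage to height integrals, the convergence of the (possibly improper) integrals, and the rigidity analysis — is routine.
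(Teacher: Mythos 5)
Your proof is correct, and its overall architecture coincides with the paper's: rewrite $\A$ and $\F$ as integrals over the height variable $y$ (with exactly the integrands you obtain) and reduce everything to a pointwise inequality between the integrands, with a rigidity analysis for the equality case. The only genuine difference is how the pointwise inequality is established. The paper observes that the integrand of $\F(\tilde\gamma)$ equals $2\Phi\bigl(\tfrac{\dot f-\dot g}{2},\tfrac{\ddot f-\ddot g}{2}\bigr)$ with $\Phi(z,w)=(1+z^2)^2/w$, computes the Hessian of $\Phi$, and invokes strict convexity; strict convexity then gives both the strict inequality and the rigidity at once, since equality forces $(\dot f,\ddot f)=(-\dot g,-\ddot g)$. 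You instead split the estimate into Cauchy--Schwarz in Engel form plus the elementary bound $2+p^2+q^2\ge 2\bigl(1+\tfrac{(p-q)^2}{4}\bigr)$, and extract rigidity only from the second step ($f'+g'\equiv 0$, then $f=-g$ via the boundary values $f(0^+)=-x_0$, $g(0^+)=x_0$). Both routes are sound; yours avoids the Hessian computation and is more elementary, while the paper's convexity argument is more compact and handles equality in a single stroke. Your explicit treatment of the boundary-value step pinning down the constant $C=0$, and your remark on the improper convergence of the height integrals (justified by $H$ being bounded away from zero on the compact curve), are details the paper leaves implicit, and your warning that a naive pointwise comparison of $1/H$ alone would not suffice is well taken.
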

	\begin{proof}
		By Fubini's theorem, $\A(\tilde\gamma)=\A(\gamma)$. To conclude, we are left to show that, if $\tilde \gamma\neq \gamma,$ then
		\begin{equation*}
			\F(\tilde\gamma)=\int_0^{\bar y}\frac{4}{\Ddot f(y)-\Ddot g(y)}\left(1+\left(\frac{\Dot f(y)-\Dot g(y)}{2}\right)^2\right)^2\,d y<   \int_0^{\bar y}\left(\frac{(1+\Dot f(y)^2)^2}{\Ddot f(y)}-\frac{(1+\Dot g(y)^2)^2}{\Ddot g(y)}\right)\,dy=\F(\gamma).
		\end{equation*}
		To this aim, we consider the function $\Phi:\rr\times(0,+\infty)\longrightarrow\rr$ defined by
		\begin{equation*}
			\Phi(z,w)=\frac{(1+z^2)^2}{w}\qquad\text{ for any $(z,w)\in\rr\times(0,+\infty)$.}
		\end{equation*}
		A simple computation reveals that, for any $(z,w)\in \rr\times(0,+\infty)$, 
		\begin{equation*}
			\trace D^2\Phi(z,w)=\frac{4+12z^2}{w}+\frac{2(1+z^2)^2}{w^3}\qquad\text{and}\qquad\det D^2\Phi(z,w)=\frac{8(1+z^2)^3}{w^4}.
		\end{equation*}
		Since $w>0$, $D^2\Phi$ is positive definite, and therefore $\Phi$ is strictly convex. Hence, the thesis follows.
	\end{proof}
	\subsection{A family of candidate critical points}\label{sec_candidates}
	In this section we detect the candidate critical points. We begin with the following rigidity statement.
	\begin{proposition}\label{scoprirepunticritici}
		Let $x_0>0$ and $L>0$ be fixed. Then 
		there exist a unique $\lambda>0$ and a unique vertically symmetric admissible curve $\gamma=(x,y)$ parametrized by arc-length, such that $\dot y(0)=\dot y(2L)=0$ and        \begin{equation}\label{equazionepuntocriticoinlemma}
			2+\frac{d^2(H^{-2})}{ds^2}=\lambda\qquad\text{ on $[0,2L]$,}
		\end{equation} 
		if and only if  $3x_0<L$.
		In these cases,
		\begin{equation}\label{definitionoflambda}
			\lambda=\lambda(x_0,L)=\frac{2L}{L+x_0},
		\end{equation}
		and moreover
		\begin{align*}
			x(s)&=-\frac{\k}{2}\left(\frac{\sigma}{\pi+\sigma}\sin\left(\frac{\pi+\sigma}{\sigma}\arcsin\left(\frac{s-L}{\k}\right)\right)+\frac{\sigma}{\pi-\sigma}\sin\left(\frac{\pi-\sigma}{\sigma}\arcsin\left(\frac{s-L}{\k}\right)\right)\right)\\
			y(s)&=\frac{\k}{2}\left(\frac{\sigma}{\pi+\sigma}\cos\left(\frac{\pi+\sigma}{\sigma}\arcsin\left(\frac{s-L}{\k}\right)\right)+\frac{\sigma}{\pi-\sigma}\cos\left(\frac{\pi-\sigma}{\sigma}\arcsin\left(\frac{s-L}{\k}\right)\right)\right)+\frac{\pi x_0}{\sigma\tan\sigma}
		\end{align*}
		for any $s\in[0,2L]$, where 
		\begin{equation}\label{keg}
			\k=\frac{L}{ \sin\left(\pi\sqrt{\frac{x_0}{L+x_0}}\right)}\qquad\text{and}\qquad \sigma=\pi\sqrt{\frac{x_0}{L+x_0}}.
		\end{equation}
		In the following, we denote such admissible curves by $\gamma(x_0,L)$.
		\begin{proof}
			We divide the proof into some steps.\\
			\textbf{Step 1.} Assume that there exist $\lambda$ and $\gamma$ as in the statement. Let $A\in\rr$ be such that $2+2A=\lambda$. 
			Then 
			\begin{equation*}
				\frac{d^2}{ds^2}\left(\frac{1}{H^2}\right)=2A,
			\end{equation*}
			whence there exist $B,C\in\rr$ such that
			\begin{equation*}
				H(s)=\frac{1}{\sqrt{As^2+Bs+C}}\qquad\text{ for any $s\in [0,2L]$.}
			\end{equation*}  
			By assumption, $\gamma$ is vertically symmetric, so that     \begin{equation}\label{condizsimmetria}
				H(s)=H(2L-s)\qquad \text{for any $s\in[0,L]$}.
			\end{equation}   
			Notice that
			\begin{equation*}
				\begin{split}
					H(s)^{-2}-H(2L-s)^{-2}&=As^2+Bs-A(2L-s)^2-B(2L-s)\\
					&=A(4Ls-4L^2)+B(2s-2L)\\
					&=2AL(2s-2L)+B(2s-2L)\\
					&=(2s-2L)(2AL+B).
				\end{split}
			\end{equation*}
			Therefore, \eqref{condizsimmetria} holds if and only if $2AL+B=0$. In this case,
			\begin{equation*}
				H(s)=\frac{1}{\sqrt{As^2-2ALs+C}}.
			\end{equation*}
			Notice that $A\neq 0$,
			since otherwise $\gamma$ would be a part of a circle with center on the vertical axis, and in this case $\dot y(0)\neq 0$. Moreover, $C>0$, since $H(0)>0$.\\
			\textbf{Step 2.}
			We claim that $A<0$. Indeed, assume by contradiction that $A>0$. As $H(L)>0$, we have $        C-AL^2>0.$
			Notice that
			\begin{equation*}
				As^2-2ALs+C=A\left(s^2-2Ls+\frac{C}{A}\right)=A\left((s-L)^2+\frac{C-AL^2}{A}\right).
			\end{equation*}
			Since $\frac{C-AL^2}{A}>0$, there exists a unique $\k>0$ such that $\k^2=\frac{C-AL^2}{A}$. Conversely, given $A>0$, $L>0$ and $\k>0$, there exists a unique $C>0$ such that $\k^2=\frac{C-AL^2}{A}$. Therefore we may adopt the parameters $(\k,A,L)$ without loss of generality. In this way, 
			\begin{equation*}
				H(s)=\frac{1}{\sqrt{A}}\cdot\frac{1}{\sqrt{(s-L)^2+\k^2}}\qquad\text{  for any $s\in[0,2L]$. }
			\end{equation*}
			Set $$\theta(s)=\int_0^s H(l)\,dl.$$ Then 
			\begin{equation*}
				\theta(s)=\frac{1}{\sqrt{A}}\log\left(\frac{s-L+\sqrt{(s-L)^2+\k^2}}{-L+\sqrt{L^2+\k^2}}\right)\qquad\text{  for any $s\in[0,2L]$. }
			\end{equation*}
			As $\gamma$ is parametrized by arc-length, it is well-known that 
			\begin{equation}\label{itiswellknownthat}
				\dot x(s)=\cos\theta(s)\qquad\text{and}\qquad\dot y(s)=\sin\theta(s)\qquad\text{for any $s\in[0,2L]$.}
			\end{equation}
			Since $\gamma$ is vertically symmetric, then $\dot y(L)=0$, so that $\theta(L)=\pi$.
			%
			In particular, 
			\begin{equation*}
				\pi=\theta(L)=\frac{1}{\sqrt{A}}\log\left(\frac{k}{-L+\sqrt{L^2+k^2}}\right),
			\end{equation*}
		whence
		\begin{equation*}
			\begin{split}
				\theta(s)=\left(\frac{\log\left(s-L+\sqrt{(s-L)^2+\k^2}\right)-\log \left(-L+\sqrt{L^2+\k^2}\right)}{\log \k-\log\left(-L+\sqrt{L^2+\k^2}\right)}\right)\pi=\left(1+\frac{\arcsinh\left(\frac{s-L}{\k}\right)}{\arcsinh\left(\frac{L}{\k}\right)}\right)\pi
			\end{split}
		\end{equation*}
		for any $s\in[0,2L]$. Set $\tau=\arcsinh\left(\frac{L}{\k}\right).$ Then
		\begin{equation*}
			\cos\theta(s)=\cos\left(\pi+\frac{\pi}{\tau}\arcsinh\left(\frac{s-L}{\k}\right)\right)=-\cos\left(\frac{\pi}{\tau}\arcsinh\left(\frac{s-L}{\k}\right)\right).
		\end{equation*}
		Via the change of variable
		\begin{equation}\label{changevarpre}
			\frac{s-L}{\k}=\sinh l,\qquad ds=\k\cosh l,
		\end{equation}
		we infer that
		\begin{equation*}
			\begin{split}
				x(2L)-x(0)&=\int_0^{2L}\cos\theta(s)\,ds\\
				&\overset{\eqref{changevarpre}}{=}-\k\int_{-\tau}^{\tau}\cos\left(\frac{\pi}{\tau} l\right)\cosh l\,dl\\
				&=-\k \left[\frac{
					\frac{\pi}{\tau} \cosh(l) \sin(\frac{\pi}{\tau} l)
				}{
					1 + \frac{\pi^2}{\tau^2}
				}+\frac{\cos(\frac{\pi}{\tau} l) \sinh(l) }{1 + \frac{\pi^2}{\tau^2}}\right]_{-\tau}^{\tau}\\
				&=\frac{2L}{1 + \frac{\pi^2}{\tau^2}}.
		\end{split}
	\end{equation*}
	On the other hand, $x(2L)-x(0)=-2x_0<0$, a contradiction.\\
	\textbf{Step 3.}
	By the previous step, $A<0$. Since $C>0$, then $\frac{AL^2-C}{A}>0$, and there exists a unique $\k>L$ such that $\k^2=\frac{AL^2-C}{A}$. Conversely, given $A<0$, $L>0$ and $\k>L$, there exists a unique $C>0$ such that $\k^2=\frac{AL^2-C}{A}$. Arguing as above, we change parameters accordingly. Notice that
	\begin{equation*}
		As^2-2ALs+C=A\left((s-L)^2+\frac{C-AL^2}{A}\right)=(-A)\left(k^2-(s-L)^2\right)\qquad\text{ for any $s\in[0,2L]$.}
	\end{equation*}
	In this way, 
	\begin{equation*}
		H(s)=\frac{1}{\sqrt{-A}}\cdot\frac{1}{\sqrt{k^2-(s-L)^2}}\qquad\text{ for any $s\in[0,2L]$.}
	\end{equation*}
	Set again  $$\theta(s)=\int_0^s H(l)\,dl.$$ Then
	\begin{equation*}
		\theta(s)=\frac{1}{\sqrt{-A}}\arcsin\left(\frac{s-L}{k}\right)+\frac{1}{\sqrt{-A}}\arcsin\left(\frac{L}{k}\right)\qquad\text{ for any $s\in[0,2L]$.}
	\end{equation*}
	Arguing as above, $\theta(L)=\pi$, whence
	\begin{equation*}
		\frac{1}{\sqrt{-A}}=\frac{\pi}{\arcsin\left(\frac{L}{k}\right)}
	\end{equation*}
	and
	\begin{equation*}
		\theta(s)=\left(1+\frac{\arcsin\left(\frac{s-L}{k}\right)}{\arcsin\left(\frac{L}{k}\right)}\right)\pi\qquad\text{ for any $s\in[0,2L]$.}
	\end{equation*}
	Set $\sigma=\arcsin\left(\frac{L}{k}\right).$ Then
	\begin{equation*}
		\cos\theta(s)=\cos\left(\pi+\frac{\pi}{\sigma}\arcsin\left(\frac{s-L}{\k}\right)\right)=-\cos\left(\frac{\pi}{\sigma}\arcsin\left(\frac{s-L}{\k}\right)\right).
	\end{equation*}
	Since $\gamma$ is vertically symmetric, $x(L)=0$. Recalling \eqref{itiswellknownthat}, and via the change of variable
	\begin{equation}\label{changevar}
		\frac{s'-L}{\k}=\sin l,\qquad ds'=\k\cos l,
	\end{equation}
	we infer that
	\begin{equation*}
		\begin{split}
			x(s)&=\int_L^s\cos\theta(s')\,ds'\\
			&=-\k\int_0^{\arcsin\left(\frac{s-L}{\k}\right)}\cos \left(\frac{\pi}{\sigma} l\right)\cos l\,d l\\
			&=-\frac{\k}{2}\int_0^{\arcsin\left(\frac{s-L}{\k}\right)}\cos\left(\left(\frac{\pi}{\sigma}+1\right)l\right)\,dl-\frac{\k}{2}\int_0^{\arcsin\left(\frac{s-L}{\k}\right)}\cos\left(\left(\frac{\pi}{\sigma}-1\right)l\right)\,d l\\
			&=-\frac{\k}{2}\left[\frac{\sigma}{\pi+\sigma}\sin\left(\left(\frac{\pi+\sigma}{\sigma}\right)l\right)+\frac{\sigma}{\pi-\sigma}\sin\left(\left(\frac{\pi-\sigma}{\sigma}\right)l\right)\right]_0^{\arcsin\left(\frac{s-L}{\k}\right)}\\
			&-\frac{\k}{2}\left(\frac{\sigma}{\pi+\sigma}\sin\left(\frac{\pi+\sigma}{\sigma}\arcsin\left(\frac{s-L}{\k}\right)\right)+\frac{\sigma}{\pi-\sigma}\sin\left(\frac{\pi-\sigma}{\sigma}\arcsin\left(\frac{s-L}{\k}\right)\right)\right).
		\end{split}
	\end{equation*}
	Therefore, imposing the boundary condition $x(0)=x_0$ and noticing that
	\begin{align*}
		\sin\left(\frac{\pi+\sigma}{\sigma}\arcsin\left(\frac{-L}{\k}\right)\right)&=-\sin\left(\pi+\sigma\right)=\frac{L}{\k},\\
		\sin\left(\frac{\pi-\sigma}{\sigma}\arcsin\left(\frac{-L}{\k}\right)\right)&=-\sin\left(\pi-\sigma\right)=-\frac{L}{\k},
	\end{align*}
	we deduce that
	\begin{equation}\label{pertrovkinok}
		x_0=x(0)=\frac{L\sigma^2}{\pi^2-\sigma^2}.
	\end{equation}
	Since $0<L<k,$  \eqref{pertrovkinok} is equivalent to
	\begin{equation*}
		\arcsin\left(\frac{L}{\k}\right)=\pi\sqrt{\frac{x_0}{ L+x_0 }},
	\end{equation*}
	which in turn admits a solution if and only if $3x_0<L$. 
	In such cases, 
	\begin{equation*}
		\k=\frac{L}{ \sin\left(\pi\sqrt{\frac{x_0}{L+x_0}}\right)},\qquad \sigma=\pi\sqrt{\frac{x_0}{L+x_0}}\qquad\text{and}\qquad    \lambda=\frac{2L}{L+x_0}.
	\end{equation*}
	We proved that, assuming that $0<3x_0<L$, there exists at most one couple $\gamma$ and $\lambda$ as in the statement.\\
	\textbf{Step 4.} To prove existence it suffices to verify that the curve of the previous step is a solution. To this aim, we just need to find $y$. Notice that 
	\begin{equation*}
		\sin\theta(s)=\sin\left(\pi+\frac{\pi}{\sigma}\arcsin\left(\frac{s-L}{\k}\right)\right)=-\sin\left(\frac{\pi}{\sigma}\arcsin\left(\frac{s-L}{\k}\right)\right).
	\end{equation*}
	Changing variables as in \eqref{changevar}, and recalling that $y(0)=0$,
	\begin{equation*}
		\begin{split}
			y(s)&=\int_0^s\sin\theta(s')\,ds'\\
			&=-\k\int^{\arcsin\left(\frac{s-L}{\k}\right)}_{-\sigma}\sin \left(\frac{\pi}{\sigma}l\right)\cos l\,dl\\
			&=-\frac{\k}{2}\int^{\arcsin\left(\frac{s-L}{\k}\right)}_{-\sigma}\sin\left(\left(\frac{\pi}{\sigma}+1\right)l\right)\,d l-\frac{\k}{2}\int^{\arcsin\left(\frac{s-L}{\k}\right)}_{-\sigma}\sin\left(\left(\frac{\pi}{\sigma}-1\right)l\right)\,d l\\
			&=\frac{\k}{2}\left[\frac{\sigma}{\pi+\sigma}\cos\left(\left(\frac{\pi+\sigma}{\sigma}\right)l\right)+\frac{\sigma}{\pi-\sigma}\cos\left(\left(\frac{\pi-\sigma}{\sigma}\right)l\right)\right]_{-\sigma}^{\arcsin\left(\frac{s-L}{\k}\right)}\\
			&=\frac{\k}{2}\left(\frac{\sigma}{\pi+\sigma}\cos\left(\frac{\pi+\sigma}{\sigma}\arcsin\left(\frac{s-L}{\k}\right)\right)+\frac{\sigma}{\pi-\sigma}\cos\left(\frac{\pi-\sigma}{\sigma}\arcsin\left(\frac{s-L}{\k}\right)\right)\right)+\frac{\pi x_0}{\sigma\tan\sigma}.
		\end{split}
	\end{equation*}
	The remaining properties are straightforward, and the thesis follows.
\end{proof}
\end{proposition}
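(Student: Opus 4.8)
The plan is to convert the Euler--Lagrange equation \eqref{equazionepuntocriticoinlemma} into an ordinary differential equation for the curvature, and then reconstruct $\gamma$ from $H$ via its turning angle. Writing $\lambda = 2 + 2A$, equation \eqref{equazionepuntocriticoinlemma} becomes $\frac{d^2}{ds^2}(H^{-2}) = 2A$, so $H^{-2}(s) = As^2 + Bs + C$ for some $B,C \in \rr$, and $H(s) = (As^2 + Bs + C)^{-1/2}$ with $As^2 + Bs + C > 0$ on $[0,2L]$. The requested vertical symmetry of $\gamma$ translates into $H(s) = H(2L-s)$; since $H^{-2}(s) - H^{-2}(2L-s) = (2s-2L)(2AL+B)$, this is equivalent to $B = -2AL$, so that $H^{-2}(s) = A(s-L)^2 + (C - AL^2)$. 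The case $A = 0$ is excluded because it forces $\gamma$ to be a circular arc centred on the vertical axis, contradicting $\dot y(0) = 0$; and $C > 0$ because $H(0) > 0$.

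Next I would determine the sign of $A$, which is the crux of the argument. Suppose first $A > 0$. Reparametrizing through $(\kappa, A, L)$ with $\kappa^2 = (C - AL^2)/A > 0$ gives $H(s) = \frac{1}{\sqrt A}\big((s-L)^2 + \kappa^2\big)^{-1/2}$, and the turning angle $\theta(s) := \int_0^s H$ is an explicit $\arcsinh$ expression. Using that for an arc-length parametrization $\dot x = \cos\theta$ and $\dot y = \sin\theta$, the symmetry condition $\dot y(L) = 0$ forces $\theta(L) = \pi$ — here one has to check that $\pi$ is the only admissible value, using that $\theta$ is strictly increasing with $\theta(0) = 0$ and that $y > 0$ on $(0,2L)$ — and this pins down $\sqrt A$ in terms of $\kappa$ and $L$. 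A direct integration then yields $x(2L) - x(0) = \int_0^{2L}\cos\theta\,ds = \frac{2L}{1 + \pi^2/\tau^2} > 0$ with $\tau = \arcsinh(L/\kappa)$, contradicting $x(2L) - x(0) = -2x_0 < 0$. Hence $A < 0$.

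With $A < 0$, write $\kappa^2 = (AL^2 - C)/A > 0$; since $H$ must be finite on all of $[0,2L]$, necessarily $\kappa > L$, and $H(s) = \frac{1}{\sqrt{-A}}\big(\kappa^2 - (s-L)^2\big)^{-1/2}$, whose turning angle is now an $\arcsin$ expression. Again $\theta(L) = \pi$ gives $\frac{1}{\sqrt{-A}} = \frac{\pi}{\sigma}$ with $\sigma := \arcsin(L/\kappa) \in (0, \pi/2)$, whence $\lambda = 2 + 2A = 2 - 2\sigma^2/\pi^2$. Integrating $\dot x = \cos\theta$ from $L$ to $s$ with product-to-sum formulas produces the stated formula for $x(s)$; imposing $x(0) = x_0$ and simplifying through $\sin(\pi\pm\sigma) = \mp\sin\sigma$ gives $x_0 = \frac{L\sigma^2}{\pi^2 - \sigma^2}$, i.e. $\sigma = \pi\sqrt{x_0/(L+x_0)}$. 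The requirement $\sigma \in (0,\pi/2)$ is then exactly $\frac{x_0}{L+x_0} < \frac{1}{4}$, i.e. $3x_0 < L$; substituting back gives the formulas for $\kappa$, $\sigma$ and $\lambda = \frac{2L}{L+x_0}$, and uniqueness is automatic because every step above was forced. For the converse direction I would integrate $\dot y = \sin\theta$ from $0$ to recover $y(s)$, the additive constant $\frac{\pi x_0}{\sigma\tan\sigma}$ coming from $y(0) = 0$, and then verify directly that the resulting curve is admissible — regular, strictly counterclockwise (immediate from $H > 0$), injective, with $y > 0$ on $(0,2L)$ and $x(L) = 0$ — and satisfies \eqref{equazionepuntocriticoinlemma} with this $\lambda$.

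The step I expect to be the main obstacle is the sign analysis of $A$ together with the bookkeeping around the turning angle: one must rule out all multiples of $\pi$ other than $\theta(L) = \pi$ (using monotonicity of $\theta$ and positivity of $y$ in the interior), and then carry the two explicit integrations through without sign errors, since the whole compatibility threshold $3x_0 < L$ rests on the range condition $\arcsin(L/\kappa) < \pi/2$. Checking that the explicitly constructed curve is genuinely admissible — in particular injectivity and positivity of $y$ — is also not completely automatic and has to be done by hand.
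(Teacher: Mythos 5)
Your proposal is correct and follows essentially the same route as the paper's proof: reduce \eqref{equazionepuntocriticoinlemma} to a quadratic expression for $H^{-2}$, use vertical symmetry to kill the linear coefficient, rule out $A=0$ and $A>0$ (the latter via the sign of $x(2L)-x(0)$), then in the case $A<0$ impose $\theta(L)=\pi$ and $x(0)=x_0$ to obtain $\sigma=\pi\sqrt{x_0/(L+x_0)}$, with the range condition $\sigma<\pi/2$ giving exactly $3x_0<L$, and finally recover $y$ by integrating $\sin\theta$. The extra checks you flag (uniqueness of the value $\theta(L)=\pi$, admissibility of the reconstructed curve) are handled only briefly in the paper as well, so there is no substantive divergence.
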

\subsection{Two equivalent notions of stationarity}\label{sec_station}
Fix $x_0>0$ and $L>0$. Let $\gamma$ be an admissible curve. We recall that 
\begin{equation*}
\F(\gamma)=\int_\gamma\frac{1}{H}\,d\gamma.
\end{equation*}
For any $\lambda\in\rr$, we also introduce the penalized functional 
\begin{equation*}
\F(\gamma)-\lambda \A(\gamma).
\end{equation*}
\begin{definition}
Fix $x_0>0$. Let $\gamma$ be an admissible curve.

\begin{enumerate}
	\item We say that $\gamma$ is an \emph{area-preserving critical point} of $\F$ if
	\begin{equation*}
		\left.\frac{d\F(\mathcal V(\cdot,t))}{dt}\right|_{t=0}=0\qquad(\text{respectively $\geq 0$})
	\end{equation*}
	for any area-preserving admissible (respectively one-sided admissible) variation $\mathcal V$ of $\gamma$.
	\item We say that $\gamma$ is a \emph{critical point} of $\F-\lambda\A$ if
	\begin{equation*}
		\left.\frac{d(\F-\lambda\A)(\mathcal V(\cdot,t))}{dt}\right|_{t=0}=0\qquad(\text{respectively $\geq 0$})
	\end{equation*}
	for any admissible (respectively one-sided admissible) variation $\mathcal V$ of $\gamma$.
\end{enumerate}
\end{definition}
The next result shows that the admissible curves of \Cref{scoprirepunticritici} are the unique critical points of both problems.
\begin{theorem}\label{uniquecriticalthm}
Fix $x_0>0$ and $L>0$. The following holds.
\begin{enumerate}
	\item [(i)]$\F$ admits a vertically symmetric area-preserving critical point $\gamma$ of length $2L$ if and only if $3x_0<L$. In these cases, $\gamma=\gamma(x_0,L)$ and it is unique.
	\item [(ii)]$\F-\lambda\A$ admits a vertically symmetric  critical point $\gamma$ of length $2L$ if and only if $3x_0<L$ and $\lambda=\lambda(x_0,L)$. In these cases, $\gamma=\gamma(x_0,L)$ and it is unique.
\end{enumerate}
\end{theorem}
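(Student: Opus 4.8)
The plan is to reduce both statements to \Cref{scoprirepunticritici} by showing that the first-order variational problems in the definition of ``area-preserving critical point of $\F$'' and of ``critical point of $\F - \lambda\A$'' are equivalent to the Euler--Lagrange system consisting of \eqref{equazionepuntocriticoinlemma} together with the tangency condition $\dot y(0) = \dot y(2L) = 0$. Once this is done, \Cref{scoprirepunticritici} immediately gives both the threshold $3x_0 < L$, the uniqueness, the explicit formula for $\gamma(x_0,L)$, and the value $\lambda(x_0,L)$ in \eqref{definitionoflambda}. I would also invoke the symmetrization result of \Cref{sec_steiner} only where a statement concerns general (not a priori symmetric) critical points; since the theorem as stated speaks of \emph{vertically symmetric} critical points, symmetrization is not strictly needed here, but it is the tool that upgrades ``vertically symmetric critical point'' to ``critical point'' in the eventual minimization statement.

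For part (ii), I would start from the first variation formula for $\F$ specialized to planar curves (the Remark after \Cref{firstvariationofimc}) and combine it with \eqref{firstvarvol} to get
\[
\left.\frac{d(\F-\lambda\A)(\mathcal V(\cdot,t))}{dt}\right|_{t=0}
= \int_0^{2L}\Big(\tfrac{d^2(H^{-2})}{ds^2} + 2 - \lambda\Big)\langle X, N\rangle\, ds
+ \Big[H^{-2}\tfrac{d\langle X,N\rangle}{ds}\Big]_0^{2L}.
\]
The key technical point is to analyze the boundary term. Since $X$ vanishes at the endpoints but $\mathcal V$ must keep $\gamma_t$ admissible (endpoints fixed at $(\pm x_0,0)$, curve in $\rr^2_+$), one shows that $\langle X,N\rangle$ and its tangential derivative at $s=0, 2L$ are not both free: admissibility forces a relation whose effect is that the boundary term is controlled by $\dot y(0), \dot y(2L)$ through $H^{-2}(0), H^{-2}(2L)$. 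Using \Cref{verticalvar} (horizontal variations $X = (f,0)$ with $f$ vanishing at the endpoints but $\dot f$ arbitrary there) as test variations isolates the boundary contribution and forces $\gamma$ to meet the axis tangentially, i.e. $\dot y(0) = \dot y(2L) = 0$; using \Cref{comptsuppareadmissible} (compactly supported $X$) then makes the boundary term vanish and forces the integrand to be zero, i.e. \eqref{equazionepuntocriticoinlemma} with $\lambda$ the given constant. Conversely, if $\gamma = \gamma(x_0,L)$ and $\lambda = \lambda(x_0,L)$, the integrand vanishes identically and the tangency $\dot y(0)=\dot y(2L)=0$ kills the boundary term for \emph{every} admissible variation, so $\gamma(x_0,L)$ is indeed a critical point; the one-sided case follows since the first variation is then identically $0$, hence $\geq 0$.

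For part (i), the additional subtlety is the area constraint: one only tests against area-preserving admissible variations, so \eqref{vpthenfovp} gives $\int_0^{2L}\langle X,N\rangle\,ds = 0$, and stationarity of $\F$ means the first variation vanishes on the subspace $\{\langle X,N\rangle : \int = 0\}$. By a standard Lagrange-multiplier argument (using \Cref{docarmo} to guarantee that the linear constraint is realized by genuine area-preserving variations, not merely first-order ones — this is exactly the role of that lemma, following \cite{MR731682}), there exists $\lambda \in \rr$ with $\tfrac{d^2(H^{-2})}{ds^2} + 2 - \lambda \equiv 0$ plus the same boundary analysis yielding $\dot y(0)=\dot y(2L)=0$; then \Cref{scoprirepunticritici} pins down $\lambda = \lambda(x_0,L)$ and forces $3x_0 < L$ and $\gamma = \gamma(x_0,L)$. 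Conversely $\gamma(x_0,L)$ satisfies the Euler--Lagrange equation with this $\lambda$, so its first variation against any area-preserving variation is $\lambda\int_0^{2L}\langle X,N\rangle\,ds + (\text{boundary term}) = 0$, giving criticality; the one-sided inequality again follows from the vanishing. The main obstacle is the careful bookkeeping of the boundary term under the admissibility constraints — making precise why the endpoint behavior of $\langle X, N\rangle$ and $\langle \nabla^S\langle X,N\rangle, \eta\rangle$ reduces exactly to the tangency condition, and why no further boundary constraint survives — so that the problem genuinely decouples into ``PDE on the interior'' plus ``tangency at the boundary.''
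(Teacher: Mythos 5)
Your overall architecture coincides with the paper's: derive \eqref{equazionepuntocriticoinlemma} (with the Lagrange multiplier produced via \Cref{docarmo} and interior zero-mean normal variations as in \Cref{comptsuppareadmissible}), derive the tangency $\dot y(0)=\dot y(2L)=0$ from horizontal variations as in \Cref{verticalvar}, and then invoke \Cref{scoprirepunticritici} to obtain the threshold $3x_0<L$, uniqueness, the explicit curve and the value $\lambda(x_0,L)$; for sufficiency you likewise reduce the first variation at $\gamma(x_0,L)$ to a pure boundary term. This is exactly how the paper argues (it writes out (i) in full and treats (ii) as a straightforward adaptation).

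There is, however, a genuine gap at the point you yourself flag as ``the main obstacle'': why the boundary term $\left[H^{-2}\tfrac{d\langle X,N\rangle}{ds}\right]_0^{2L}$ vanishes at $\gamma(x_0,L)$. Tangency alone does not kill it: since $X(0)=X(2L)=0$, this term equals $H^{-2}(2L)\big(\partial_t\partial_s y_t|_{(t,s)=(0,0)}-\partial_t\partial_s y_t|_{(t,s)=(0,2L)}\big)$, a quantity depending on the variation and not on $\gamma$ alone. The paper's mechanism is a constrained-extremum argument: admissibility forces $y_t(0)=y_t(2L)=0$ and $y_t>0$ on $(0,2L)$, hence $\dot y_t(0)\ge 0$ and $\dot y_t(2L)\le 0$ for every $t$, while tangency says these quantities vanish at $t=0$; for a two-sided variation $t=0$ is therefore an interior minimum of $t\mapsto\dot y_t(0)$ and maximum of $t\mapsto\dot y_t(2L)$, so both mixed derivatives vanish and the boundary term is zero. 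For one-sided variations this argument yields only sign information ($\partial_t\dot y_t(0)|_{t=0^+}\ge 0$, $\partial_t\dot y_t(2L)|_{t=0^+}\le 0$), so the first variation is merely $\ge 0$: your claim that it is ``identically $0$, hence $\ge 0$'' in the one-sided case is false (a one-sided variation lifting the curve near an endpoint produces a strictly positive boundary term), even though the inequality $\ge 0$ demanded by the definition of critical point still holds via the sign argument. Supplying this constrained-extremum step is necessary both for the two-sided vanishing and for the one-sided inequality; without it the sufficiency half of both (i) and (ii) is not established.
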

\begin{proof}
We start proving (i). Assume that $3x_0<L$. Then we can consider the curve $\gamma(x_0,L)$ given by \Cref{scoprirepunticritici}. Let $\mathcal V(\cdot,t)=\gamma_t=(x_t,y_t)$ be an area-preserving admissible variation of $\gamma(x_0,L)$. By \Cref{firstvariationofimc} and \Cref{scoprirepunticritici}, it holds that
\begin{equation*}
	\begin{split}
		\left.\frac{d\F(\mathcal V(\cdot,t))}{dt}\right|_{t=0}&=\int_0^{2L}\left(2+\frac{d^2(H^{-2})}{ds^2}\right)\langle X,N\rangle\,ds+ H^{-2}(2L)\left.\frac{d\langle X,N\rangle}{ds}\right|_{s=2L}-H^{-2}(0)\left.\frac{d\langle X,N\rangle}{ds}\right|_{s=0}\\
		&        =\lambda(x_0,L)\int_0^{2L}\langle X,N\rangle\,ds+ H^{-2}(2L)\left(\left.\frac{d\langle X,N\rangle}{ds}\right|_{s=2L}-\left.\frac{d\langle X,N\rangle}{ds}\right|_{s=0}\right)\\
		&\overset{\eqref{vpthenfovp}}{=} H^{-2}(2L)\left(\left.\frac{d\langle X,N\rangle}{ds}\right|_{s=2L}-\left.\frac{d\langle X,N\rangle}{ds}\right|_{s=0}\right)\\
		&\overset{X(0)=X(2L)=0}{=} H^{-2}(2L)\left(\left\langle\dot X(2L),N(2L)\right\rangle-\left\langle\dot X(0),N(0)\right\rangle\right)\\
		&\overset{\dot y(0)=\dot y(2L)=0}{=} H^{-2}(2L)\left(\left.\frac{\partial^2y_t(s)}{\partial t\partial s}\right|_{(t,s)=(0,0)}-\left.\frac{\partial^2y_t(s)}{\partial t\partial s}\right|_{(t,s)=(0,2L)}\right).
	\end{split} 
\end{equation*}
Notice that $y_t(0)=y_t(2L)=0$ and $y_t(s)>0$ for any $s\in[0,2L]$ and any $t\in[-t_{\mathcal V},t_{\mathcal V}]$. Therefore $\dot y_t(0)\geq 0$ and $\dot y_t(2L)\leq 0$ for any $t\in[-t_{\mathcal V},t_{\mathcal V}]$. Since $\dot y_0(0)=\dot y_0(2L)=0$, we conclude that 
\begin{equation*}
	\left.\frac{\partial^2y_t(s)}{\partial t\partial s}\right|_{(t,s)=(0,0)}=\left.\frac{\partial^2y_t(s)}{\partial t\partial s}\right|_{(t,s)=(0,2L)}=0,
\end{equation*}
whence 
\begin{equation*}
	\left.\frac{d\F(\mathcal V(\cdot,t))}{dt}\right|_{t=0}=0.
\end{equation*}
If $\mathcal V$ is just an area-preserving one-sided admissible variation, we proceed as above, noticing that in this case we can just conclude that 
\begin{equation*}
	\left.\frac{\partial^2y_t(s)}{\partial t\partial s}\right|_{(t,s)=(0,0)}\geq 0\qquad\text{and}\qquad\left.\frac{\partial^2y_t(s)}{\partial t\partial s}\right|_{(t,s)=(0,2L)}\leq 0,
\end{equation*}
so that 
\begin{equation*}
	\left.\frac{d\F(\mathcal V(\cdot,t))}{dt}\right|_{t=0}\geq 0.
\end{equation*}
Therefore, $\gamma(x_0,L)$ is an area-preserving critical point of $\F$. To show uniqueness, let $\ga$ be a vertically symmetric area-preserving critical point of length $2L$. Let $\varphi\in C^\infty_c(0,2L)$ be such that $\int_0^{2L}\varphi\,ds=0$. By \Cref{docarmo} and \Cref{comptsuppareadmissible}, we find an area-preserving admissible variation such that $X=\varphi N$. Then
\begin{equation}\label{intinproofcriticalpoints}
	0=  \left.\frac{d\F(\mathcal V(\cdot,t))}{dt}\right|_{t=0}=\int_0^{2L}\left(2+\frac{d^2(H^{-2})}{ds^2}\right)\varphi\,ds.
\end{equation}
By \eqref{intinproofcriticalpoints}, there exists $\lambda\in\rr$ such that
\begin{equation*}
	2+\frac{d^2(H^{-2})}{ds^2}=\lambda
\end{equation*}
on $[0,2L]$. Let $f\in C^2[0,2L]$ be such that $f(0)=0$, $f(2L)=0$, $\dot f(0)=1$ and $\dot f(2L)=0$. By \Cref{docarmo} and \Cref{verticalvar}, there exists an area-preserving admissible variation such that $X=(f,0)$. Then, arguing as above and by \eqref{intinproofcriticalpoints}, \begin{equation*}
	0=  \left.\frac{d\F(\mathcal V(\cdot,t))}{dt}\right|_{t=0}=H^{-2}(2L)\left(\left\langle\dot X(2L),N(2L)\right\rangle-\left\langle\dot X(0),N(0)\right\rangle\right)=-\dot y(0).
\end{equation*}
Therefore $\dot y(0)=0$. Similarly, $\dot y(2L)=0$. By \Cref{scoprirepunticritici}, we conclude that $\gamma=\gamma(x_0,L)$ and that 
$3x_0<L$ is necessary, and (i) follows. The proof of (ii) is a straightforward adaptation of the above.
\end{proof}
\subsection{Uniqueness of critical points with prescribed area}\label{sec_prescribedvolume}
The statement of \Cref{uniquecriticalthm} ensures existence of critical points with prescribed length $2L$, provided that $0<3x_0<L$. In this subsection, we prove that it is equivalent to prescribe the enclosed area. Indeed, for critical points, these two quantities are in bijection, as the next proposition shows.
\begin{proposition}\label{volbijlength}
Fix $0<3x_0<L$. Let $\A_0(x_0,L)$ be the area enclosed by $\gamma(x_0,L)$.   Then
\begin{equation}\label{volumeofcriticalpointsinstatement}
	\A_0(x_0,L)=\frac{ L+x_0 }{2\pi}\left(\frac{\pi\sqrt{\frac{x_0}{ L+x_0 }}}{\sin\left(\pi\sqrt{\frac{x_0}{ L+x_0 }}\right)}\right)^2\left( L+(2 L+x_0 )\frac{\sin\left(2\pi\sqrt{\frac{x_0}{ L+x_0 }}\right)}{2\pi\sqrt{\frac{x_0}{ L+x_0 }}}\right).
\end{equation}
In particular, 
\begin{equation*}
	\frac{\partial \A_0(x_0,L)}{\partial L}>0, \qquad\inf_{L>3x_0}\A_0(x_0,L)=\lim_{L\to 3x_0}\A_0(x_0,L)=\frac{3}{2}\pi x_0^2.
\end{equation*}
In conclusion, for any $\A_0>\frac{3}{2}\pi x_0^2$, there exists a unique $L=L(x_0,\A_0)>3x_0$ such that $\A_0=\A_0(x_0,L)$.
\end{proposition}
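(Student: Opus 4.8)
The strategy is to evaluate $\A_0(x_0,L)$ in closed form from \eqref{volumediomegaincurva} and the explicit parametrization of $\gamma(x_0,L)$ given in \Cref{scoprirepunticritici}, and then to analyse the resulting function of $L$. For the computation, recall that with $\sigma=\pi\sqrt{x_0/(L+x_0)}\in(0,\pi/2)$ and $\k=L/\sin\sigma$ the curve $\gamma(x_0,L)=(x,y)$ satisfies $\dot x=\cos\theta$, $\dot y=\sin\theta$, $\theta(s)=\pi+\tfrac{\pi}{\sigma}\arcsin\tfrac{s-L}{\k}$. I would insert this into $\A_0=\int_0^{2L}x(s)\dot y(s)\,ds$ and use the change of variables $\tfrac{s-L}{\k}=\sin l$, $ds=\k\cos l\,dl$, $l\in[-\sigma,\sigma]$, which turns the integrand into a product of $\sin\tfrac{(\pi\pm\sigma)l}{\sigma}$, $\sin\tfrac{\pi l}{\sigma}$ and $\cos l$; expanding with product-to-sum formulas yields a sum of terms $\cos(al)$ with constant $a$, each integrating elementarily over $[-\sigma,\sigma]$. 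Collecting the terms and substituting $\sin\sigma=L/\k$, $\sigma=\pi\sqrt{x_0/(L+x_0)}$ and the relation $x_0=L\sigma^2/(\pi^2-\sigma^2)$ from the proof of \Cref{scoprirepunticritici} produces \eqref{volumeofcriticalpointsinstatement}. (A slightly shorter variant uses $\A_0=\tfrac12\int_0^{2L}(x\dot y-\dot x y)\,ds$: under the same substitution $x\dot y-\dot x y$ collapses to $\tfrac{\pi\k\sigma}{\pi^2-\sigma^2}\cos l+\tfrac{\pi x_0}{\sigma\tan\sigma}\cos\tfrac{\pi l}{\sigma}$, leaving only two integrals.)

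For the monotonicity I would fix $x_0$ and take $\sigma=\sigma(L)$ as the variable. Since $3x_0<L\iff\sigma<\pi/2$, the map $L\mapsto\sigma$ is a smooth strictly decreasing bijection of $(3x_0,\infty)$ onto $(0,\pi/2)$, with inverse $L=x_0(\pi^2-\sigma^2)/\sigma^2$. Substituting this into \eqref{volumeofcriticalpointsinstatement} gives
\[
\A_0=\pi x_0^2\,g(\sigma),\qquad g(\sigma)=\frac{P(\sigma)}{4\sigma^3\sin^2\sigma},\qquad P(\sigma)=2\sigma(\pi^2-\sigma^2)+(2\pi^2-\sigma^2)\sin 2\sigma ,
\]
with $P>0$ on $(0,\pi/2)$; hence $\partial_L\A_0>0$ is equivalent to $g'(\sigma)<0$, i.e. to $P'(\sigma)<P(\sigma)\bigl(\tfrac3\sigma+2\cot\sigma\bigr)$ on $(0,\pi/2)$. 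The key observation is that on expanding $P\bigl(\tfrac3\sigma+2\cot\sigma\bigr)-P'$ with $\sin 2\sigma=2\sin\sigma\cos\sigma$, $\cos 2\sigma=2\cos^2\sigma-1$ and $\cot\sigma\,\sin 2\sigma=2\cos^2\sigma$, all the $\cos^2\sigma$ (equivalently $\cos 2\sigma$) contributions cancel, leaving
\[
P\Bigl(\tfrac3\sigma+2\cot\sigma\Bigr)-P'=8\pi^2-2\sigma^2+\frac{3(2\pi^2-\sigma^2)\sin 2\sigma}{\sigma}+2\sigma\sin 2\sigma+4\sigma(\pi^2-\sigma^2)\cot\sigma ,
\]
which on $(0,\pi/2)$ is a sum of strictly positive terms (since there $\sigma^2<\pi^2/4$, $0<2\sigma<\pi$, $\cot\sigma>0$ and $\pi^2-\sigma^2>0$). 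Hence $g'<0$ on $(0,\pi/2)$, that is $\partial_L\A_0(x_0,L)>0$ for every $L>3x_0$.

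To pin down the range, from the closed form for $g$ one reads off $\lim_{\sigma\to(\pi/2)^-}g(\sigma)=\tfrac32$ and $\lim_{\sigma\to 0^+}g(\sigma)=+\infty$, i.e. $\lim_{L\to 3x_0^+}\A_0(x_0,L)=\tfrac32\pi x_0^2$ and $\lim_{L\to\infty}\A_0(x_0,L)=+\infty$. Together with the strict monotonicity just proved this yields $\inf_{L>3x_0}\A_0(x_0,L)=\tfrac32\pi x_0^2$ (not attained) and shows that $\A_0(x_0,\cdot)\colon(3x_0,\infty)\to(\tfrac32\pi x_0^2,+\infty)$ is a continuous strictly increasing bijection. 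In particular, for every $\A_0>\tfrac32\pi x_0^2$ there is a unique $L=L(x_0,\A_0)>3x_0$ with $\A_0(x_0,L)=\A_0$.

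The evaluation of the area integral is lengthy but routine. The real obstacle is the sign of $g'$: a priori $g'(\sigma)$ is an awkward mixture of polynomial and trigonometric terms, and what makes the argument short is precisely the cancellation of the $\cos^2\sigma$ contributions in $P\bigl(\tfrac3\sigma+2\cot\sigma\bigr)-P'$, after which positivity on $(0,\pi/2)$ is immediate; without spotting this cancellation one is pushed toward much less transparent estimates (such as power-series comparisons on $(0,\pi/2)$).
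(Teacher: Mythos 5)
Your closed-form computation follows the paper's own route (the change of variables \eqref{changevar} applied to \eqref{volumediomegaincurva} plus product-to-sum expansions), so the genuine difference lies in the monotonicity step. There the paper simply differentiates the closed formula in $L$ and displays $\partial_L\A_0$ as an explicit quotient all of whose terms are positive because $\sigma=\pi\sqrt{x_0/(L+x_0)}\in(0,\pi/2)$; you instead pass to $\sigma$ as the variable, write $\A_0=\pi x_0^2\,g(\sigma)$ with $g=P/(4\sigma^3\sin^2\sigma)$, and reduce to $g'<0$. I checked your reduction ($g'<0\iff P'<P(\tfrac3\sigma+2\cot\sigma)$, using $\sigma'(L)<0$) and your cancellation identity; both are correct for your $P$, and the remaining expression is indeed a sum of terms that are positive on $(0,\pi/2)$. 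Your variant is more transparent than the paper's brute-force derivative (it is a one-variable computation with a visible mechanism for why the sign comes out), and it also makes the surjectivity onto $(\tfrac32\pi x_0^2,+\infty)$ explicit via $g(0^+)=+\infty$, which the paper leaves implicit in "the last claim is clear".

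One point does need fixing, although it originates in the paper rather than in your scheme. Carrying out the integral actually yields the first line of \eqref{computvolume},
\[
\A_0=\frac{\pi\k^2\sigma^2}{2(\pi^2-\sigma^2)}+\frac{\pi\k^2\sigma(\pi^2+\sigma^2)}{4(\pi^2-\sigma^2)^2}\sin2\sigma,
\]
which in your normalization corresponds to $P(\sigma)=2\sigma(\pi^2-\sigma^2)+(\pi^2+\sigma^2)\sin2\sigma$, i.e.\ to the coefficient $L+2x_0$ rather than $2L+x_0$; this is inconsistent with the printed \eqref{volumeofcriticalpointsinstatement}, and a numerical check (e.g.\ $x_0=1$, $L=8$, $\sigma=\pi/3$: the integral gives $\approx 25.4$, the printed formula $\approx 31.5$) shows the printed coefficient is the typo. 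So your assertion that the computation "produces \eqref{volumeofcriticalpointsinstatement}", and hence your $P$ with the factor $2\pi^2-\sigma^2$, is not accurate as stated; had you carried the integral out you would have caught the mismatch. Fortunately your argument is insensitive to this: the $\cos2\sigma$-cancellation is structural (the term $2c\cos2\sigma$ in $P'$ is reproduced exactly by $2\cot\sigma\cdot c\sin2\sigma$, whatever the coefficient $c$ of $\sin2\sigma$ in $P$), and with the correct $P$ one finds
\[
P\Bigl(\tfrac3\sigma+2\cot\sigma\Bigr)-P'=6\pi^2+2\sigma^2+\frac{(3\pi^2+\sigma^2)\sin2\sigma}{\sigma}+4\sigma(\pi^2-\sigma^2)\cot\sigma>0
\]
on $(0,\pi/2)$, while the limit values are unchanged because the discrepant term carries $\sin2\sigma$, which vanishes at $\sigma=\pi/2$ (both versions give $g(\pi/2^-)=\tfrac32$ and $g(0^+)=+\infty$). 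With $P$ replaced by the coefficient coming from the actual integral, your proof is complete and correct.
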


\begin{proof}
First we prove \eqref{volumeofcriticalpointsinstatement}.
\begin{equation*}
	\begin{split}
		\A_0(x_0,L)&\overset{\eqref{volumediomegaincurva}}{=}\int_0^{2L} x(s)\dot y(s)\,ds\\
		&=-\frac{\k\sigma}{2\pi+2\sigma}\int_0^{2L}\sin\left(\frac{\pi+\sigma}{\sigma}\arcsin\left(\frac{s-L}{\k}\right)\right)\sin\left(\pi+\frac{\pi}{\sigma}\arcsin\left(\frac{s-L}{\k}\right)\right)\,ds\\
		&\quad-\frac{\k\sigma}{2\pi-2\sigma}\int_0^{2L}\sin\left(\frac{\pi-\sigma}{\sigma}\arcsin\left(\frac{s-L}{\k}\right)\right)\sin\left(\pi+\frac{\pi}{\sigma}\arcsin\left(\frac{s-L}{\k}\right)\right)\,ds\\
		&=\frac{\k\sigma}{2\pi+2\sigma}\int_0^{2L}\sin\left(\frac{\pi+\sigma}{\sigma}\arcsin\left(\frac{s-L}{\k}\right)\right)\sin\left(\frac{\pi}{\sigma}\arcsin\left(\frac{s-L}{\k}\right)\right)\,ds\\
		&\quad+\frac{\k\sigma}{2\pi-2\sigma}\int_0^{2L}\sin\left(\frac{\pi-\sigma}{\sigma}\arcsin\left(\frac{s-L}{\k}\right)\right)\sin\left(\frac{\pi}{\sigma}\arcsin\left(\frac{s-L}{\k}\right)\right)\,ds,\\
	\end{split}
\end{equation*}
where $\sigma$ is defined in \eqref{keg}. We change variables as in \eqref{changevar}, and obtain that
\begin{equation*}
	\begin{split}
		\A_0(x_0,L)&=\frac{\k^2\sigma}{2\pi+2\sigma}\int_{-\sigma}^\sigma   \sin\left(\left(\frac{\pi+\sigma}{\sigma}\right)l\right)\sin\left(\frac{\pi}{\sigma}l\right)\cos l\,dl\\
		&\quad+\frac{\k^2\sigma}{2\pi-2\sigma}\int_{-\sigma}^\sigma   \sin\left(\left(\frac{\pi-\sigma}{\sigma}\right)l\right)\sin\left(\frac{\pi}{\sigma}l\right)\cos l\,dl\\
		&=\frac{\k^2\sigma}{4\pi+4\sigma}\left(\int_{-\sigma}^\sigma  \sin^2\left(\left(\frac{\pi+\sigma}{\sigma}\right)l\right)\,dl+\int_{-\sigma}^\sigma   \sin\left(\left(\frac{\pi+\sigma}{\sigma}\right)l\right)\sin\left(\left(\frac{\pi-\sigma}{\sigma}\right)l\right)\,dl\right)\\
		&\quad+\frac{\k^2\sigma}{4\pi-4\sigma}\left(\int_{-\sigma}^\sigma  \sin^2\left(\left(\frac{\pi-\sigma}{\sigma}\right)l\right)\,dl+\int_{-\sigma}^\sigma   \sin\left(\left(\frac{\pi+\sigma}{\sigma}\right)l\right)\sin\left(\left(\frac{\pi-\sigma}{\sigma}\right)l\right)\,dl\right)\\
		&=\frac{\k^2\sigma^2}{2(\pi+\sigma)^2}\int_{0}^{\pi+\sigma}\sin^2\left(\zeta\right)\,d\zeta+\frac{\k^2\sigma^2}{2(\pi-\sigma)^2}\int_{0}^{\pi-\sigma}\sin^2\left(\zeta\right)\,d\zeta\\
		&\quad+\frac{\pi \k^2\sigma}{2(\pi^2-\sigma^2)}\int_{-\sigma}^\sigma   \sin\left(\left(\frac{\pi+\sigma}{\sigma}\right)l\right)\sin\left(\left(\frac{\pi-\sigma}{\sigma}\right)l\right)\,dl.
	\end{split}
\end{equation*}
First,
\begin{equation*}
	\begin{split}
		\frac{\k^2\sigma^2}{2(\pi+\sigma)^2}\int_{0}^{\pi+\sigma}\sin^2\left(\zeta\right)\,d\zeta     
		&=\frac{\k^2\sigma^2}{4\pi+4\sigma}-\frac{k^2\sigma^2}{8(\pi+\sigma)^2}\sin2\sigma\\
	\end{split}
\end{equation*}
Similarly,
\begin{equation*}
	\begin{split}
		\frac{\k^2\sigma^2}{2(\pi-\sigma)^2}\int_{0}^{\pi-\sigma}\sin^2\left(\zeta\right)\,d\zeta     
		&=\frac{\k^2\sigma^2}{4\pi-4\sigma}+\frac{k^2\sigma^2}{8(\pi-\sigma)^2}\sin2\sigma\\
	\end{split}
\end{equation*}
Finally,
\begin{equation*}
	\begin{split}
		\frac{\pi \k^2\sigma}{2(\pi^2-\sigma^2)}\int_{-\sigma}^\sigma   &\sin\left(\left(\frac{\pi+\sigma}{\sigma}\right)l\right)\sin\left(\left(\frac{\pi-\sigma}{\sigma}\right)l\right)\,dl\\
		&=   \frac{\pi \k^2\sigma}{4(\pi^2-\sigma^2)}\int_{-\sigma}^\sigma\cos 2l\,dl
		- \frac{\pi \k^2\sigma}{4(\pi^2-\sigma^2)}\int_{-\sigma}^\sigma\cos \left(\frac{2\pi}{\sigma}l\right)\,dl\\
		&= \frac{\pi \k^2\sigma}{4(\pi^2-\sigma^2)}\sin2\sigma.
	\end{split}
\end{equation*}
In conclusion,
\begin{equation}\label{computvolume}
	\begin{split}
		\A_0(x_0,L)
		&=\frac{ \pi \k^2\sigma^2}{2(\pi^2-\sigma^2)}+\frac{\pi \k^2\sigma(\pi^2+\sigma^2)}{4(\pi^2-\sigma^2)^2}\sin2\sigma\\
		&=\frac{ L+x_0 }{2\pi}\left(\frac{\pi\sqrt{\frac{x_0}{ L+x_0 }}}{\sin\left(\pi\sqrt{\frac{x_0}{ L+x_0 }}\right)}\right)^2\left( L+(2 L+x_0 )\frac{\sin\left(2\pi\sqrt{\frac{x_0}{ L+x_0 }}\right)}{2\pi\sqrt{\frac{x_0}{ L+x_0 }}}\right).
	\end{split}
\end{equation}
In particular,
\begin{equation*}
	\frac{\partial \A_0(x_0,L)}{\partial L}=\frac{\left(
		4 L \pi^2 \frac{\cos\!\left(\pi \sqrt{\tfrac{x_0}{L + x_0}}\right)}{\sin\!\left(\pi \sqrt{\tfrac{x_0}{L + x_0}}\right)}
		+ (3L + 4x_0)\left( \left(\frac{ L+x_0 }{x_0}\right)\sin\!\left(2 \pi \sqrt{\tfrac{x_0}{L + x_0}}\right) 
		+ 2 \pi \sqrt{\frac{ L+x_0 }{x_0}} \right)
		\right)}
	{8\left(\frac{L+x_0}{x_0}\right)^{\frac{3}{2}}\, \sin^2\!\left(\pi \sqrt{\tfrac{x_0}{L + x_0}}\right)}.
\end{equation*}
By $L>3x_0$, $\sin\left(\pi\sqrt{\frac{x_0}{ L+x_0 }}\right)>0,$ $\cos\left(\pi\sqrt{\frac{x_0}{ L+x_0 }}\right)>0$ and $\sin\left(2\pi\sqrt{\frac{x_0}{ L+x_0 }}\right)>0$, whence $\frac{\partial \A_0(x_0,L)}{\partial L}>0$. Therefore, $\inf_{L>3x_0}\A_0(x_0,L)=\lim_{L\to 3x_0}\A_0(x_0,L)$, and the expression of the latter follows from \eqref{volumeofcriticalpointsinstatement}. The last claim is clear from the above.
\end{proof}
Thanks to \Cref{volbijlength}, we can re-state \Cref{uniquecriticalthm} as follows. \begin{corollary}\label{uniquecriticalthmconvolume}
Fix $x_0>0$ and $\A_0>0$. The following facts hold.
\begin{enumerate}
	\item $\F$ admits a vertically symmetric area-preserving critical point $\gamma$ with enclosed area $\A_0$ if and only if $\A_0>\frac{3}{2}\pi x_0^2$. In these cases, $\gamma=\gamma(x_0,L(x_0,\A_0))$ and it is unique.
	\item $\F-\lambda\A$ admits a vertically symmetric  critical point $\gamma$with enclosed area $\A$ if and only if $\A_0>\frac{3}{2}\pi x_0^2$. In these cases, $\gamma=\gamma(x_0,L(x_0,\A_0))$ and it is unique.
\end{enumerate}
\end{corollary}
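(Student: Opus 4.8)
The plan is to deduce the statement by assembling \Cref{uniquecriticalthm} with the length--area bijection of \Cref{volbijlength}, so that essentially no new argument is required. First I would note that any vertically symmetric critical point (of $\F$ or of $\F-\lambda\A$) has a well-defined length, which we write $2L$, and that prescribing its enclosed area is merely an alternative normalization to prescribing $L$: the two are linked by the function $L\mapsto\A_0(x_0,L)$ computed in \eqref{volumeofcriticalpointsinstatement}. So the task is to transport the existence/uniqueness statements from the parametrization by $L$ to the parametrization by $\A_0$.

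For part (1): by \Cref{uniquecriticalthm}(i), $\F$ admits a vertically symmetric area-preserving critical point of length $2L$ precisely when $3x_0<L$, and in that range this critical point is $\gamma(x_0,L)$ and is unique. By \Cref{volbijlength}, $L\mapsto\A_0(x_0,L)$ is strictly increasing on $(3x_0,+\infty)$ with infimum $\frac{3}{2}\pi x_0^2$, not attained; I would add the observation that it is also unbounded above, since as $L\to+\infty$ one has $\sigma=\pi\sqrt{x_0/(L+x_0)}\to 0$, hence $\sigma/\sin\sigma\to 1$ and $\sin(2\sigma)/(2\sigma)\to 1$ in \eqref{volumeofcriticalpointsinstatement}, so $\A_0(x_0,L)\sim\frac{3}{2\pi}L^2\to+\infty$. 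Thus $L\mapsto\A_0(x_0,L)$ is a strictly increasing bijection of $(3x_0,+\infty)$ onto $\bigl(\tfrac{3}{2}\pi x_0^2,+\infty\bigr)$ — a fact already recorded in the last sentence of \Cref{volbijlength}. Consequently, if $\A_0\le\frac{3}{2}\pi x_0^2$ no admissible curve of this type encloses area $\A_0$, so no such critical point exists; while if $\A_0>\frac{3}{2}\pi x_0^2$ there is a unique $L=L(x_0,\A_0)>3x_0$ with $\A_0(x_0,L)=\A_0$, whence $\gamma(x_0,L(x_0,\A_0))$ is the unique vertically symmetric area-preserving critical point of $\F$ with enclosed area $\A_0$.

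For part (2) I would repeat the argument verbatim, invoking \Cref{uniquecriticalthm}(ii) in place of (i): a vertically symmetric critical point of $\F-\lambda\A$ of length $2L$ exists precisely when $3x_0<L$ and $\lambda=\lambda(x_0,L)=\frac{2L}{L+x_0}$, in which case it equals $\gamma(x_0,L)$ and is unique. Transporting this through the same bijection $L\leftrightarrow\A_0(x_0,L)$ yields the claim, with the Lagrange multiplier forced to be $\lambda(x_0,L(x_0,\A_0))$. There is no genuine obstacle in this proof: it is a bookkeeping consequence of the two preceding results, and the only point not literally contained in \Cref{volbijlength} is the unboundedness of $L\mapsto\A_0(x_0,L)$ as $L\to+\infty$, which is the elementary asymptotic estimate indicated above.
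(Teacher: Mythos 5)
Your proposal is correct and follows exactly the route the paper takes: the corollary is stated there as an immediate restatement of \Cref{uniquecriticalthm} through the length--area bijection of \Cref{volbijlength}, which already asserts that for every $\A_0>\tfrac{3}{2}\pi x_0^2$ there is a unique $L(x_0,\A_0)>3x_0$ with $\A_0=\A_0(x_0,L)$. Your added asymptotic check that $\A_0(x_0,L)\to+\infty$ as $L\to+\infty$ is a harmless (and correct) way of justifying the surjectivity that the paper leaves implicit.
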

\begin{remark}
Fix $x_0>0$. In view of \Cref{volbijlength}, in the following we may equivalently fix $L>3x_0$ or $\A_0>\frac{3}{2}\pi x_0^2$. In both cases, we tacitly mean that $\A_0=\A_0(x_0,L)$ and $L=L(x_0,\A_0)$ respectively.
\end{remark}
\begin{remark}\label{rem_funzionalisupunticritici}
Fix $0<3x_0<L$.
Recall that
\begin{equation*}
	H(s)=\frac{\pi}{\sigma\sqrt{\k^2-(s-L)^2}}.
\end{equation*}
Therefore
\begin{equation}\label{computeh}
	\begin{split}
		\F(\gamma(x_0,L))=\frac{ \k\sigma}{\pi}\int_{0}^{2L}\sqrt{1-\left(\frac{s-L}{\k}\right)^2}\,ds\overset{\eqref{changevar}}{=} \frac{\k^2\sigma}{\pi}\int_{-\sigma}^{\sigma}\cos^2 l\,dl 
		=\frac{\k^2\sigma}{2\pi}\left(2\sigma+\sin 2\sigma\right).
	\end{split}
\end{equation}
In particular,
\begin{equation*}
	\begin{split}
		\A(\gamma(x_0,L))&\overset{\eqref{computvolume}}{=}
		\frac{ \pi \k^2\sigma^2}{2(\pi^2-\sigma^2)}+\frac{\pi \k^2\sigma(\pi^2+\sigma^2)}{4(\pi^2-\sigma^2)^2}\sin2\sigma\\
		& \overset{\eqref{computeh}}{=}\frac{1}{\lambda}\left(\frac{2(\pi^2-\sigma^2)\sigma+(\pi^2+\sigma^2)\sin 2\sigma}{2(\pi^2-\sigma^2)\sigma+(\pi^2-\sigma^2)\sin 2\sigma}\right)\F(\gamma(x_0,L)).
	\end{split}
\end{equation*}
\end{remark}
\section{Second variation and stability}\label{sec_sectionstab}
In this section we introduce various notions of stability for the functionals $\F$ and $\F-\lambda\A$.  
\subsection{Second variation formulas}\label{sec_secvarform}
We begin computing the second variation formula for $\F$ when evaluated at critical points.
\begin{proposition}\label{propsecvaroff}
Fix $0<3x_0<L$. Let $\gamma=\gamma(x_0,L)$ be the unique critical point of \Cref{uniquecriticalthm}. Let $\lambda=\lambda(x_0,L)$ be as in \eqref{definitionoflambda}. Let $\mathcal V$ be a one-sided admissible variation of $\gamma$. Let $\varphi,\varphi_\tau,\psi,\psi_\tau\in C^\infty[0,2L]$ be such that 
\begin{equation}\label{quattrfunzioni}
	X(s)=\varphi(s)N(s)+\varphi_\tau(s)\dot\gamma(s)\qquad\text{and}\qquad X'(s)=\psi(s)N(s)+\psi_\tau(s)\dot\gamma(s)\qquad\text{ for any $s\in[0,2L]$. }
\end{equation}
Then
\begin{equation}\label{eq:sec}
	\begin{split}
		\left.\frac{d^2\F(\mathcal V(\cdot,t))}{dt^2}\right|_{t=0^+}&=\int_0^{2L}\left(2H\varphi^2-\frac{2\dot\varphi^2}{H}+\frac{2\ddot\varphi^2}{H^3}\right)\,ds\\
		&\quad+\lambda\int_0^{2L}\left(\psi+H\varphi_\tau^2+2\varphi\dot\varphi_\tau\right)\,ds+\left[\frac{\dot\psi-2\dot\varphi\dot\varphi_\tau}{H^2}\right]_{0}^{2L}.
	\end{split}
\end{equation}
If in addition $\mathcal V$ is an admissible variation, then $\dot\varphi(0)=\dot\varphi(2L)=0$, $\dot\psi(0)\leq 0$ and $\dot\psi(2L)\geq 0$, whence 
\begin{equation}\label{secvarFbetter}
	\begin{split}
		\left.\frac{d^2\F(\mathcal V(\cdot,t))}{dt^2}\right|_{t=0}&=\int_0^{2L}\left(2H\varphi^2-\frac{2\dot\varphi^2}{H}+\frac{2\ddot\varphi^2}{H^3}\right)\,ds+\lambda\int_0^{2L}\left(\psi+H\varphi_\tau^2+2\varphi\dot\varphi_\tau\right)\,ds+\left[\frac{\dot\psi}{H^2}\right]_{0}^{2L}\\
		&\geq \int_0^{2L}\left(2H\varphi^2-\frac{2\dot\varphi^2}{H}+\frac{2\ddot\varphi^2}{H^3}\right)\,ds+\lambda\int_0^{2L}\left(\psi+H\varphi_\tau^2+2\varphi\dot\varphi_\tau\right)\,ds.
	\end{split}
\end{equation}
\end{proposition}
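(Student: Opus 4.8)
The plan is to differentiate $\F(\gamma_t)$ twice under the integral sign and then integrate by parts, using throughout the Euler--Lagrange equation satisfied by $\gamma=\gamma(x_0,L)$. By \eqref{ibpformula} and the definition of the curvature,
\[
\F(\gamma_t)=\int_0^{2L}\frac{|\partial_s\gamma_t|}{H_t}\,ds=\int_0^{2L}G\big(\partial_s\gamma_t,\partial_s^2\gamma_t\big)\,ds,\qquad G(v,w):=\frac{|v|^4}{\langle v,w^\perp\rangle},
\]
where $H_t$ is the curvature of $\gamma_t$ (licit for small $|t|$, since $H_t>0$ there). Since $\partial_t\partial_s^k\gamma_t|_{t=0}=\partial_s^kX$ and $\partial_t^2\partial_s^k\gamma_t|_{t=0}=\partial_s^kX'$, differentiating twice in $t$ and evaluating at $t=0$ (where $\partial_s\gamma=\dot\gamma$, $\partial_s^2\gamma=\ddot\gamma$) produces an \emph{acceleration term} $\int_0^{2L}\big(D_vG\cdot\dot X'+D_wG\cdot\ddot X'\big)\,ds$ (involving only $X'$) together with a \emph{velocity term}, a quadratic form in $(\dot X,\ddot X)$ with coefficients the second derivatives of $G$ at $(\dot\gamma,\ddot\gamma)$. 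The same identity holds as a right derivative when $\mathcal V$ is only one-sided.

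The acceleration term equals $\left.\tfrac{d}{dr}\right|_{r=0}\F(\gamma+rX')$, i.e.\ the first variation of $\F$ at $\gamma$ along the acceleration $X'$. As $X'$ vanishes at the endpoints, \Cref{firstvariationofimc} (in the planar form of the remark following it) applies and yields $\int_0^{2L}\big(\tfrac{d^2(H^{-2})}{ds^2}+2\big)\langle X',N\rangle\,ds+\big[\tfrac{1}{H^2}\tfrac{d\langle X',N\rangle}{ds}\big]_0^{2L}$. Using $\langle X',N\rangle=\psi$ from \eqref{quattrfunzioni} and the Euler--Lagrange equation \eqref{equazionepuntocriticoinlemma}, namely $\tfrac{d^2(H^{-2})}{ds^2}+2=\lambda$, this term is exactly $\lambda\int_0^{2L}\psi\,ds+\big[\tfrac{\dot\psi}{H^2}\big]_0^{2L}$.

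For the velocity term I would expand, in the orthonormal frame $\{\dot\gamma,N\}$ and using \Cref{lemmaconcontisugammadddot} (so $\dot N=H\dot\gamma$, $\ddot\gamma=-HN$, $\dddot\gamma=-H^2\dot\gamma-\dot H\dot\gamma^\perp$), first $\dot X=(\dot\varphi-H\varphi_\tau)N+(H\varphi+\dot\varphi_\tau)\dot\gamma$ and then $\ddot X$, and substitute into the bilinear forms $D^2G(\dot\gamma,\ddot\gamma)$, whose entries are explicit rational expressions in $H$ (at $t=0$ one has $|\dot\gamma|=1$, $\langle\dot\gamma,\ddot\gamma^\perp\rangle=H$). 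Then integrate by parts repeatedly, moving all $s$-derivatives onto $\varphi$ so as to reduce everything to $\varphi,\dot\varphi,\ddot\varphi$ and $\varphi_\tau,\dot\varphi_\tau$. Boundary contributions either vanish because $X(0)=X(2L)=0$ forces $\varphi,\varphi_\tau$ to vanish there, or are collected; every term involving $\dot H$ or $\ddot H$ is rewritten through the fact that, by \eqref{equazionepuntocriticoinlemma}, $H^{-2}$ is a quadratic polynomial in $s$ (so $(H^{-2})''\equiv\lambda-2$, $(H^{-2})'''\equiv0$) — this is what produces the residual factor $\lambda$ on the tangential terms. After simplification the velocity term becomes $\int_0^{2L}\big(2H\varphi^2-\tfrac{2\dot\varphi^2}{H}+\tfrac{2\ddot\varphi^2}{H^3}\big)\,ds+\lambda\int_0^{2L}\big(H\varphi_\tau^2+2\varphi\dot\varphi_\tau\big)\,ds+\big[\tfrac{-2\dot\varphi\dot\varphi_\tau}{H^2}\big]_0^{2L}$, and adding the acceleration term gives \eqref{eq:sec}. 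I expect this to be the main obstacle: the curvature Hessian is heavy to write down and the bookkeeping of the boundary terms produced by the successive integrations by parts is delicate.

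Finally, for the refinement assume $\mathcal V$ is admissible (two-sided). By \Cref{scoprirepunticritici}, $\dot y(0)=\dot y(2L)=0$, and since $\gamma$ is a strictly counterclockwise arc-length embedding with $N$ outer to $\Om$ one has $\dot\gamma(0)=\dot\gamma(2L)=(1,0)$, hence $N(0)=N(2L)=(0,-1)$. As $X$ and $X'$ vanish at the endpoints, $\dot\varphi(0)=\langle\dot X(0),N(0)\rangle=-\partial_t\dot y_t(0)|_{t=0}$ and $\dot\psi(0)=\langle\dot X'(0),N(0)\rangle=-\partial_t^2\dot y_t(0)|_{t=0}$, with the analogous identities at $s=2L$. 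For each $t$ the function $y_t$ is nonnegative and vanishes at $s=0$, so $\dot y_t(0)\ge0$; it vanishes at $t=0$, so $t\mapsto\dot y_t(0)$ attains an interior minimum at $t=0$, whence $\partial_t\dot y_t(0)|_{t=0}=0$ and $\partial_t^2\dot y_t(0)|_{t=0}\ge0$, i.e.\ $\dot\varphi(0)=0$ and $\dot\psi(0)\le0$. Symmetrically, $y_t\ge0$ near $s=2L$ with $y_t(2L)=0$ forces $\dot y_t(2L)\le0$, so $t\mapsto\dot y_t(2L)$ has an interior maximum at $t=0$, giving $\dot\varphi(2L)=0$ and $\dot\psi(2L)\ge0$. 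Substituting $\dot\varphi(0)=\dot\varphi(2L)=0$ in \eqref{eq:sec} yields the identity in \eqref{secvarFbetter}, and $\dot\psi(0)\le0\le\dot\psi(2L)$ together with $H>0$ gives $\big[\tfrac{\dot\psi}{H^2}\big]_0^{2L}\ge0$, the claimed inequality.
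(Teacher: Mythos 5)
Your overall strategy is sound and, where you actually carry out the steps, it agrees with the paper. The one genuinely different ingredient is your treatment of the terms linear in the acceleration $X'$: you observe that, since the integrand of $\F$ depends only on $(\partial_s\gamma_t,\partial_s^2\gamma_t)$, the linear-in-$X'$ part of the second derivative is exactly the first variation of $\F$ at $\gamma$ along $X'$, and since $X'$ vanishes at the endpoints you may invoke \Cref{firstvariationofimc} (planar form) together with the Euler--Lagrange equation \eqref{equazionepuntocriticoinlemma} to get $\lambda\int_0^{2L}\psi\,ds+\left[\dot\psi/H^2\right]_0^{2L}$ at once. The paper instead computes these contributions by hand (its terms involving $\psi$ and $\psi_\tau$) and only then applies \eqref{equazionepuntocriticoinlemma}; your shortcut is correct and arguably cleaner, since it also absorbs the tangential component $\psi_\tau\dot\gamma$ without further computation. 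Your derivation of the refined boundary conditions ($\dot\varphi=0$ at the endpoints, $\dot\psi(0)\le 0\le\dot\psi(2L)$, via $y_t\ge 0$ and the min/max of $t\mapsto\dot y_t(0),\dot y_t(2L)$ at $t=0$, using $\dot\gamma(0)=\dot\gamma(2L)=(1,0)$, $N(0)=N(2L)=(0,-1)$) is precisely the paper's argument and is complete.

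The caveat is the quadratic (velocity) term: you describe the plan — expand $\dot X,\ddot X$ in the frame, plug into the Hessian of $G(v,w)=|v|^4/\langle v,w^\perp\rangle$ at $(\dot\gamma,\ddot\gamma)$, integrate by parts, and eliminate $\dot H,\ddot H$ via \eqref{equazionepuntocriticoinlemma} — but you do not execute it; you simply assert that the outcome is $\int_0^{2L}\bigl(2H\varphi^2-\tfrac{2\dot\varphi^2}{H}+\tfrac{2\ddot\varphi^2}{H^3}\bigr)ds+\lambda\int_0^{2L}\bigl(H\varphi_\tau^2+2\varphi\dot\varphi_\tau\bigr)ds-\bigl[\tfrac{2\dot\varphi\dot\varphi_\tau}{H^2}\bigr]_0^{2L}$. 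That assertion is correct (it matches the paper's computation, which is carried out in full through the expansions of $\dot\alpha_0,\ddot\alpha_0,\dot\beta_0,\ddot\beta_0$ and three separate integrations by parts), but it is exactly where all the work of the proof lies, so as written your argument is a correct plan with the central computation left unverified rather than a complete proof. To close it you would need to reproduce that Hessian expansion and the bookkeeping of boundary terms; nothing in your setup would obstruct this, as the paper's version of the same computation shows.
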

\begin{proof}
As usual, we set $\gamma_t=\mathcal V(\cdot,t)$ for any $t\in[0,t_{\mathcal V}]$. As $\gamma_t(0)=(x_0,0)$ and $\gamma_t(2L)=(-x_0,0)$,
\begin{equation}\label{zeroorderbcsecondvar}
	\varphi(0)=\varphi_\tau(0)=\psi(0)=\psi_\tau(0)=\varphi(2L)=\varphi_\tau(2L)=\psi(2L)=\psi_\tau(2L)=0.
\end{equation}
Define $\alpha(s,t)=\langle\dot\gamma_t(s),\dot\gamma_t(s)\rangle$ and $\beta(s,t)=\langle\dot\gamma_t(s),\ddot\gamma_t(s)^\perp\rangle$ for any $t\in[0,\tv]$. For the sake of notational simplicity, we set $\dot\alpha_0=\left.\frac{\partial \alpha(\cdot,t)}{\partial t}\right|_{t=0^+}$ and $\ddot\alpha_0=\left.\frac{\partial^2 \alpha(\cdot,t)}{\partial t^2}\right|_{t=0^+}$, and accordingly for $\beta.$
Notice that $\alpha(s,0)\equiv 1$ and $\beta (s,0)=H(s)$ for any $s\in[0,2L]$. Then
\begin{multline}\label{eq:sec1}
	\left.\frac{\partial^2\left(\alpha^2\beta^{-1}\right)}{\partial t^2}\right|_{t=0^+}=\left.\frac{\partial}{\partial t}\left(2\alpha\frac{\partial\alpha}{\partial t}\beta^{-1}-\alpha^2\frac{\partial \beta}{\partial t}\beta^{-2}\right)\right|_{t=0^+}\\
	\begin{split}
		&=\left.\left(2\left(\frac{\partial \alpha}{\partial t}\right)^2\beta^{-1}+2\alpha\frac{\partial^2 \alpha}{\partial t^2}\beta^{-1}-4\alpha\frac{\partial \alpha}{\partial t}\frac{\partial \beta}{\partial t}\beta^{-2}-\alpha^2\frac{\partial^2\beta}{\partial t^2}\beta^{-2}+2\alpha^2\left(\frac{\partial \beta}{\partial t}\right)^2\beta^{-3}\right)\right|_{t=0^+}\\
		&=\frac{2\dot\alpha_0^2}{H}+\frac{2\ddot\alpha_0}{H}-\frac{4\dot\alpha_0\dot\beta_0}{H^2}-\frac{\ddot\beta_0}{H^2}+\frac{2\dot\beta_0^2}{H^3}\\
		&=\frac{2\ddot\alpha_0}{H}-\frac{\ddot\beta_0}{H^2}+\frac{2}{H}\left(\dot\alpha_0-\frac{\dot\beta_0}{H}\right)^2,
	\end{split}
\end{multline}
so that
\begin{equation}\label{firststepsecondvar}
	\left.\frac{d^2\F(\mathcal V(\cdot,t))}{dt^2}\right|_{t=0^+}=\int_{0}^{2L}\left.\frac{\partial^2 }{\partial t^2}\left(\frac{\langle\dot\gamma_t,\dot\gamma_t\rangle^2}{\langle\dot\gamma_t,\ddot\gamma_t^\perp\rangle}\right)\right|_{t=0^+}\,ds\overset{\eqref{eq:sec1}}{=}\int_0^{2L}\left(\frac{2\ddot\alpha_0}{H}-\frac{\ddot\beta_0}{H^2}+\frac{2}{H}\left(\dot\alpha_0-\frac{\dot\beta_0}{H}\right)^2\right)\,ds.
\end{equation}
We compute the integrand appearing in \eqref{firststepsecondvar}. Notice that 
\begin{align*}
	\frac{\partial\alpha}{\partial t}&=2\left\langle\frac{\partial \dot\gamma_t}{\partial t},\dot\gamma_t\right\rangle,\\
	\frac{\partial^2\alpha}{\partial t^2}&=2\left\langle\frac{\partial^2\dot\gamma_t}{\partial t^2},\dot\gamma_t\right\rangle+2\left\langle\frac{\partial \dot\gamma_t}{\partial t},\frac{\partial \dot\gamma_t}{\partial t}\right\rangle,\\
	\frac{\partial\beta}{\partial t}&=\left\langle\frac{\partial \dot\gamma_t}{\partial t},\ddot\gamma_t^\perp\right\rangle+\left\langle\dot\gamma_t,\frac{\partial\ddot\gamma_t^\perp}{\partial t}\right\rangle,\\
	\frac{\partial^2\beta}{\partial t^2}&=\left\langle\frac{\partial^2 \dot\gamma_t}{\partial t^2},\ddot\gamma_t^\perp\right\rangle+2\left\langle\frac{\partial \dot\gamma_t}{\partial t},\frac{\partial\ddot\gamma_t^\perp}{\partial t}\right\rangle+\left\langle\dot\gamma_t,\frac{\partial^2\ddot\gamma_t^\perp}{\partial t^2}\right\rangle.
\end{align*}
Therefore, recalling that $N=\dot\gamma^\perp$ and $\ddot\gamma^\perp=\dot N=H\dot\gamma$ (cf. \eqref{dotgammaeddotgamma}),
\begin{equation}\label{eq:sec2}
	\begin{split}
		\dot\alpha_0&=2\big\langle\dot X,\dot\gamma\big\rangle,\\
		\ddot\alpha_0&=2\big\langle\dot X',\dot\gamma\big\rangle+2\big\langle\dot X,\dot X\big\rangle,\\
		\dot \beta_0&=H\big\langle\dot X,\dot\gamma\big\rangle-\big\langle\ddot X,N\big\rangle,\\
		\ddot\beta_0&=H\big\langle\dot X',\dot\gamma\big\rangle+2\big\langle\dot X,\ddot X^\perp\big\rangle-\big\langle\ddot X',N\big\rangle.
	\end{split}
\end{equation}
By \eqref{quattrfunzioni},
\begin{equation}\label{dotxexprinproof}
	\dot X=\varphi\dot N+\dot\varphi N+\varphi_\tau\ddot\gamma+\dot\varphi_\tau\dot\gamma=H\varphi\dot\gamma+\dot\varphi N-H\varphi_\tau N+\dot\varphi_\tau\dot\gamma=\left(\dot\varphi-H\varphi_\tau\right)N+\left(\dot\varphi_\tau+H\varphi\right)\dot\gamma
\end{equation}
and
\begin{equation}\label{eq:sec3}
	\begin{split}
		\ddot X&=\left(\dot\varphi-H\varphi_\tau\right)\dot N+\left(\ddot\varphi-\dot H\varphi_\tau-H\dot \varphi_\tau\right)N+\left(\dot\varphi_\tau+H\varphi\right)\ddot\gamma+\left(\ddot\varphi_\tau+\dot H\varphi+H\dot\varphi\right)\dot\gamma\\
		&=\left(H\dot\varphi-H^2\varphi_\tau\right)\dot \gamma+\left(\ddot\varphi-\dot H\varphi_\tau-H\dot \varphi_\tau\right)N-\left(H\dot\varphi_\tau+H^2\varphi\right)N+\left(\ddot\varphi_\tau+\dot H\varphi+H\dot\varphi\right)\dot\gamma\\
		&=\left(\ddot\varphi-2H\dot \varphi_\tau-\dot H\varphi_\tau-H^2\varphi\right)N+\left(\ddot\varphi_\tau+2H\dot\varphi+\dot H\varphi-H^2\varphi_\tau\right)\dot\gamma.
	\end{split}
\end{equation}
Similarly, we have 
\begin{equation}\label{eq:sec4}
	\begin{split}
		\dot X'&=\left(\dot\psi-H\psi_\tau\right)N+\left(\dot\psi_\tau+H\psi\right)\dot\gamma,\\
		\ddot X'&=\left(\ddot\psi-2H\dot \psi_\tau-\dot H\psi_\tau-H^2\psi\right)N+\left(\ddot\psi_\tau+2H\dot\psi+\dot H\psi-H^2\psi_\tau\right)\dot\gamma.
	\end{split}
\end{equation}
Inserting \eqref{dotxexprinproof}, \eqref{eq:sec3} and \eqref{eq:sec4} in \eqref{eq:sec2}, we get
\begin{align*}
	\dot\alpha_0&=2\dot\varphi_\tau+2H\varphi,\\
	\ddot\alpha_0&=2\dot\psi_\tau+2H\psi+2\left(\dot\varphi-H\varphi_\tau\right)^2+2\left(\dot\varphi_\tau+H\varphi\right)^2\\
	&=2\dot\psi_\tau+2H\psi+2\dot\varphi^2-4H\dot\varphi\varphi_\tau+2H^2\varphi_\tau^2+2\dot\varphi_\tau^2+4H\varphi\dot\varphi_\tau+2H^2\varphi^2,\\
	\dot\beta_0&=H\dot\varphi_\tau+H^2\varphi-\ddot\varphi+2H\dot\varphi_\tau+\dot H\varphi_\tau+H^2\varphi\\
	&=2H^2\varphi+3H\dot\varphi_\tau+\dot H\varphi_\tau-\ddot\varphi,\\
	\ddot\beta_0&=H\dot\psi_\tau+H^2\psi+2\left(\dot\varphi-H\varphi_\tau\right)\left(\ddot\varphi_\tau+2H\dot\varphi+\dot H\varphi-H^2\varphi_\tau\right)\\
	&\quad-2\left(\dot\varphi_\tau+H\varphi\right)\left(\ddot\varphi-2H\dot \varphi_\tau-\dot H\varphi_\tau-H^2\varphi\right)-\left(\ddot\psi-2H\dot \psi_\tau-\dot H\psi_\tau-H^2\psi\right)\\
	&=2H^2\psi+3H\dot\psi_\tau+\dot H\psi_\tau-\ddot\psi\\
	&\quad+2\dot\varphi\ddot\varphi_\tau+4H\dot\varphi^2+2\dot H\varphi\dot\varphi-6H^2\dot\varphi\varphi_\tau-2H\varphi_\tau\ddot\varphi_\tau+2H^3\varphi_\tau^2\\
	&\quad-2\ddot\varphi\dot\varphi_\tau+4H\dot\varphi_\tau^2+2\dot H\varphi_\tau\dot\varphi_\tau+6H^2\varphi\dot\varphi_\tau-2H\varphi\ddot\varphi+2H^3\varphi^2.
\end{align*}
In particular,
\begin{equation}\label{eq:sec5}       
	\begin{split}
		\frac{\partial^2\left(\alpha^2\beta^{-1}\right)}{\partial t^2}\Big|_{t=0}&=\frac{2\ddot\alpha_0}{H}-\frac{\ddot\beta_0}{H^2}+\frac{2}{H}\left(\dot\alpha_0-\frac{\dot\beta_0}{H}\right)^2\\
		&=\frac{4\dot\psi_\tau}{H}+4\psi+\frac{4\dot\varphi^2}{H}-8\dot\varphi\varphi_\tau+4H\varphi_\tau^2+\frac{4\dot\varphi_\tau^2}{H}+8\varphi\dot\varphi_\tau+4H\varphi^2-2\psi\\
		&\quad -\frac{3\dot\psi_\tau}{H}-\frac{\dot H\psi_\tau}{H^2}+\frac{\ddot\psi}{H^2}-\frac{2\dot\varphi\ddot\varphi_\tau}{H^2}-\frac{4\dot\varphi^2}{H}-\frac{2\dot H\varphi\dot\varphi}{H^2}+6\dot\varphi\varphi_\tau+\frac{2\varphi_\tau\ddot\varphi_\tau}{H}-2H\varphi_\tau^2\\
		&\quad+\frac{2\ddot\varphi\dot\varphi_\tau}{H^2}-\frac{4\dot\varphi_\tau^2}{H}-\frac{2\dot H\varphi_\tau\dot\varphi_\tau}{H^2}-6\varphi\dot\varphi_\tau+\frac{2\varphi\ddot\varphi}{H}-2H\varphi^2+\frac{2}{H}\left(\dot\varphi_\tau+\frac{\dot H\varphi_\tau}{H}-\frac{\ddot\varphi}{H}\right)^2\\
		&=2\psi+\frac{\ddot\psi}{H^2}+\frac{\dot\psi_\tau}{H}+\frac{d(H^{-1})}{ds}\psi_\tau+2H\varphi^2+2\frac{d(H^{-1})}{ds}\varphi\dot\varphi+\frac{2\varphi\ddot\varphi}{H}+\frac{2\ddot\varphi^2}{H^3}\\
		&\quad+2H\varphi_\tau^2+\frac{2\dot\varphi_\tau^2}{H}+\frac{2\varphi_\tau\ddot\varphi_\tau}{H}-2\frac{d(H^{-1})}{ds}\varphi_\tau\dot\varphi_\tau-\dot H\frac{d(H^{-2})}{ds}\varphi_\tau^2\\
		&\quad-2\dot\varphi\varphi_\tau+2\varphi\dot\varphi_\tau-\frac{2\dot\varphi\ddot\varphi_\tau}{H^2}-\frac{2\ddot\varphi\dot\varphi_\tau}{H^2}+2\frac{d(H^{-2})}{ds}\ddot\varphi\varphi_\tau.
	\end{split}
\end{equation}
Now we compute the integral of the right-hand side of \eqref{eq:sec5}. Regarding the terms involving $\psi$ and $\psi_\tau$,
\begin{equation}\label{eq:term1}
	\begin{split}
		\int_0^{2L}\left(2\psi+\frac{\ddot\psi}{H^2}\right)\,ds&=\int_0^{2L}\left(2\psi-\frac{d(H^{-2})}{ds}\dot\psi\right)\,ds+\left[\frac{\dot\psi}{H^2}\right]_0^{2L}\\
		&\overset{\eqref{zeroorderbcsecondvar}}{=}\int_0^{2L}\left(2+\frac{d^2(H^{-2})}{ds^2}\right)\psi\,ds+\left[\frac{\dot\psi}{H^2}\right]_0^{2L}\\
		&\overset{\eqref{equazionepuntocriticoinlemma}}{=}\lambda\int_0^{2L}\psi\,ds+\left[\frac{\dot\psi}{H^2}\right]_0^{2L},
	\end{split}
\end{equation}
and moreover,
\begin{equation}\label{eq:term2}
	\int_{0}^{2L}\left(\frac{\dot\psi_\tau}{H}+\frac{d(H^{-1})}{ds}\psi_\tau\right)\,ds=\int_0^{2L}\frac{d}{ds}\left(\frac{\psi_\tau}{H}\right)\,ds\overset{\eqref{zeroorderbcsecondvar}}{=}0.
\end{equation}
For what concerns $\varphi$ and $\varphi_\tau$, notice first that
\begin{equation}\label{eq:term3}
	\int_{0}^{2L}\left(2H\varphi^2+2\frac{d(H^{-1})}{ds}\varphi\dot\varphi+\frac{2\varphi\ddot\varphi}{H}+\frac{2\ddot\varphi^2}{H^3}\right)\,ds\overset{\eqref{zeroorderbcsecondvar}}{=}\int_0^{2L}\left(2H\varphi^2-\frac{2\dot\varphi^2}{H}+\frac{2\ddot\varphi^2}{H^3}\right)\,ds.
\end{equation}
Moreover, 
\begin{equation}\label{eq:term4}
	\begin{split}
		\int_{0}^{2L}\Big(2H\varphi_\tau^2+&\frac{2\dot\varphi_\tau^2}{H}+\frac{2\varphi_\tau\ddot\varphi_\tau}{H}-2\frac{d(H^{-1})}{ds}\varphi_\tau\dot\varphi_\tau-\dot H\frac{d(H^{-2})}{ds}\varphi_\tau^2\Big)\,ds\\
		&\overset{\eqref{zeroorderbcsecondvar}}{=}\int_{0}^{2L}\left(2H\varphi_\tau^2+\frac{4\dot\varphi_\tau^2}{H}+\frac{4\varphi_\tau\ddot\varphi_\tau}{H}-\dot H\frac{d(H^{-2})}{ds}\varphi_\tau^2\right)\,ds\\
		&\overset{\eqref{zeroorderbcsecondvar}}{=}\int_{0}^{2L}\left(\left(2+\frac{d^2(H^{-2})}{ds^2}\right)H\varphi_\tau^2+\frac{4}{H}\frac{d(\varphi_\tau\dot\varphi_\tau)}{ds}+ 2H\frac{d(H^{-2})}{ds}\varphi_\tau\dot\varphi_\tau\right)\,ds\\
		&\overset{\eqref{equazionepuntocriticoinlemma}}{=}\lambda\int_{0}^{2L}H\varphi_\tau^2\,ds+\int_{0}^{2L}\left(\frac{4}{H}\frac{d(\varphi_\tau\dot\varphi_\tau)}{ds}+ 4\frac{d(H^{-1})}{ds}\varphi_\tau\dot\varphi_\tau\right)\,ds\\
		&=\lambda\int_{0}^{2L}H\varphi_\tau^2\,ds+\int_{0}^{2L}\frac{d}{ds}\left(\frac{4\varphi_\tau\dot\varphi_\tau}{H}\right)\,ds\\
		&\overset{\eqref{zeroorderbcsecondvar}}{=}\lambda\int_{0}^{2L}H\varphi_\tau^2\,ds.
	\end{split}
\end{equation}
Finally,
\begin{equation}\label{eq:term5}
	\begin{split}
		\int_0^{2L}\Big(-2\dot\varphi\varphi_\tau&+2\varphi\dot\varphi_\tau-\frac{2\dot\varphi\ddot\varphi_\tau}{H^2}-\frac{2\ddot\varphi\dot\varphi_\tau}{H^2}+2\frac{d(H^{-2})}{ds}\ddot\varphi\varphi_\tau\Big)\,ds\\
		&\overset{\eqref{zeroorderbcsecondvar}}{=}\int_0^{2L}\left(-4\dot\varphi\varphi_\tau+2\frac{d(H^{-2})}{ds}\dot\varphi\dot\varphi_\tau+2\frac{d(H^{-2})}{ds}\ddot\varphi\varphi_\tau\right)\,ds-\left[\frac{2\dot\varphi\dot\varphi_\tau}{H^2}\right]_{0}^{2L}\\
		&=\int_0^{2L}\left(-4\dot\varphi\varphi_\tau+2\frac{d(H^{-2})}{ds}\frac{d(\dot\varphi\varphi_\tau)}{ds}\right)\,ds-\left[\frac{2\dot\varphi\dot\varphi_\tau}{H^2}\right]_{0}^{2L}\\
		&\overset{\eqref{zeroorderbcsecondvar}}{=}-\int_0^{2L}2\dot\varphi\varphi_\tau\left(2+\frac{d^2(H^{-2})}{ds^2}\right)\,ds-\left[\frac{2\dot\varphi\dot\varphi_\tau}{H^2}\right]_{0}^{2L}\\
		&\overset{\eqref{zeroorderbcsecondvar},\eqref{equazionepuntocriticoinlemma}}{=}\lambda\int_0^{2L}2\varphi\dot\varphi_\tau\,ds-\left[\frac{2\dot\varphi\dot\varphi_\tau}{H^2}\right]_{0}^{2L}.
	\end{split}
\end{equation}
Substituting \eqref{eq:term1}, \eqref{eq:term2}, \eqref{eq:term3}, \eqref{eq:term4}, \eqref{eq:term5}  in \eqref{firststepsecondvar}, we get \eqref{eq:sec}. To conclude, assume that $\mathcal V$ is admissible. We know from the proof of \Cref{uniquecriticalthm} that $0$ is, respectively, minimum and maximum point of $\dot y_t(0)$ and of $\dot y_t(2L)$. 
Therefore
\begin{equation*}
	\left \langle \dot X(0),\frac{\partial}{\partial y}\right\rangle,\,  \left \langle \dot X(2L),\frac{\partial}{\partial y}\right\rangle=0,\qquad   \left \langle \dot X'(0),\frac{\partial}{\partial y}\right\rangle\geq 0\qquad\text{and}\qquad   \left \langle \dot X'(0),\frac{\partial}{\partial y}\right\rangle\leq 0.
\end{equation*}
Recalling that $\dot \gamma(0)=\dot\gamma(2L)=(1,0)$ and $N(0)=N(2L)=(0,-1)$, and by \eqref{zeroorderbcsecondvar}, we get $\dot\varphi(0)=\dot\varphi(2L)=0$, $\dot\psi(0)\leq 0$ and $\dot\psi(2L)\geq 0$, and the thesis follows.
\end{proof}
The second variation formula for the area is well-known. We prove it for the sake of completeness.
\begin{proposition}\label{secvarvolthm}
Fix $x_0>0$ and $L>0$. Let $\gamma:[0,2L]\to\rr^2$ be an admissible curve parametrized by arc-length. Let $\mathcal V$ be a one-sided admissible variation of $\gamma$. Let $\varphi,\varphi_\tau,\psi\in C^\infty[0,2L]$ be as in \eqref{quattrfunzioni}.
Then
\begin{equation}\label{secondvarvol}
	\left.\frac{d^2\A(\mathcal V(\cdot,t))}{dt^2}\right|_{t=0^+}=\int_0^{2L}H\varphi^2\,ds+\int_0^{2L}\left(\psi+H\varphi_\tau^2+2\varphi\dot\varphi_\tau\right)\,ds.
\end{equation}
\end{proposition}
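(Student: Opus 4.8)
The plan is to differentiate the signed-area representation \eqref{volumediomegaincurva} twice with respect to $t$. Writing $\gamma_t=\mathcal V(\cdot,t)$ and recalling that $(\xi,\eta)^\perp=(\eta,-\xi)$, \eqref{volumediomegaincurva} reads $\A(\gamma_t)=\tfrac12\int_0^{2L}\langle\gamma_t,\dot\gamma_t^\perp\rangle\,ds$, and since $\mathcal V$ is smooth one may differentiate under the integral sign. First I would compute the first derivative: mixed partials commute, so $\partial_t(\dot\gamma_t^\perp)=\partial_s\big((\partial_t\gamma_t)^\perp\big)$, and an integration by parts in $s$ — with vanishing boundary terms, because $\gamma_t(0),\gamma_t(2L)$ are fixed and hence $\partial_t\gamma_t=0$ at the endpoints — shows, using $\langle a,b^\perp\rangle=-\langle a^\perp,b\rangle$, that the two resulting terms coincide; this gives $\frac{d}{dt}\A(\gamma_t)=\int_0^{2L}\langle\partial_t\gamma_t,\dot\gamma_t^\perp\rangle\,ds$, in accordance with \eqref{firstvarvol}.

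Differentiating once more and evaluating at $t=0^+$, using again that $\perp$ is linear and commutes with $\partial_s$, I obtain
\[
\left.\frac{d^2\A(\gamma_t)}{dt^2}\right|_{t=0^+}=\int_0^{2L}\Big(\langle X',\dot\gamma^\perp\rangle+\langle X,\dot X^\perp\rangle\Big)\,ds.
\]
Since $\gamma$ is parametrized by arc-length, $\dot\gamma^\perp=N$, so the first term contributes $\int_0^{2L}\psi\,ds$. For the second term I would expand $\dot X$ in the moving frame $\{N,\dot\gamma\}$ as in \eqref{dotxexprinproof} (equivalently, re-deriving it from $\ddot\gamma=-HN$ and $\dot N=H\dot\gamma$, which follow from \Cref{lemmaconcontisugammadddot}), namely $\dot X=(\dot\varphi-H\varphi_\tau)N+(\dot\varphi_\tau+H\varphi)\dot\gamma$; then, since $N^\perp=-\dot\gamma$ and $\dot\gamma^\perp=N$, one gets $\dot X^\perp=(\dot\varphi_\tau+H\varphi)N-(\dot\varphi-H\varphi_\tau)\dot\gamma$, and pairing with $X=\varphi N+\varphi_\tau\dot\gamma$ and invoking the orthonormality of $\{N,\dot\gamma\}$ yields $\langle X,\dot X^\perp\rangle=H\varphi^2+H\varphi_\tau^2+\varphi\dot\varphi_\tau-\varphi_\tau\dot\varphi$.

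It then remains to observe that $\varphi\dot\varphi_\tau-\varphi_\tau\dot\varphi=2\varphi\dot\varphi_\tau-\frac{d}{ds}(\varphi\varphi_\tau)$, so that, since the variation fixes the endpoints (whence $\varphi(0)=\varphi(2L)=0$), one has $\int_0^{2L}(\varphi\dot\varphi_\tau-\varphi_\tau\dot\varphi)\,ds=\int_0^{2L}2\varphi\dot\varphi_\tau\,ds$; collecting the three contributions gives exactly \eqref{secondvarvol}. I do not expect any genuine obstacle here: the argument is a routine double differentiation under the integral, and the only points deserving care are the sign bookkeeping under the $\perp$-operation (in particular $(v^\perp)^\perp=-v$) and the — immediate — justification of differentiating under the integral, which follows from the smoothness of $\mathcal V$. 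The one-sided case is handled verbatim, with $t=0$ replaced by $t=0^+$ throughout.
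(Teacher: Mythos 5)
Your proposal is correct and follows essentially the same route as the paper: differentiate the signed-area representation \eqref{volumediomegaincurva} twice, reduce the integrand to $\langle X',N\rangle+\langle X,\dot X^\perp\rangle$, expand $\dot X$ in the frame $\{N,\dot\gamma\}$ via \eqref{dotxexprinproof}, and remove the $\varphi\dot\varphi_\tau-\dot\varphi\varphi_\tau$ asymmetry by a total derivative that vanishes at the fixed endpoints. The only cosmetic difference is that you perform the integration by parts in $s$ already at the first-variation stage, whereas the paper differentiates the symmetric form $\tfrac12\langle\gamma_t,\dot\gamma_t^\perp\rangle$ twice and integrates by parts the term $\tfrac12\langle\gamma,\dot X'^{\perp}\rangle$ afterwards; the computations coincide.
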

\begin{proof}
Notice that
\begin{equation*}
	\begin{split}
		\left.\frac{\partial^2}{\partial t^2}\left(\frac{1}{2}\big\langle \gamma_t,\dot\gamma_t^\perp\big\rangle\right)\right|_{t=0^+}&=\left.\frac{\partial}{\partial t}\left(\frac{1}{2}\left\langle \frac{\partial\gamma_t}{\partial t},\dot\gamma_t^\perp\right\rangle+\frac{1}{2}\left\langle \gamma_t,\frac{\partial\dot\gamma_t^\perp}{\partial t}\right\rangle\right)\right|_{t=0^+}\\
		&=\left.\left(\frac{1}{2}\left\langle \frac{\partial^2\gamma_t}{\partial t^2},\dot\gamma_t^\perp\right\rangle+\left\langle \frac{\partial\gamma_t}{\partial t},\frac{\partial\dot\gamma_t^\perp}{\partial t}\right\rangle+\frac{1}{2}\left\langle \gamma_t,\frac{\partial^2\dot\gamma_t^\perp}{\partial t^2}\right\rangle\right)\right|_{t=0^+}\\
		&=\frac{1}{2}\left\langle X',N \right\rangle+\left\langle X,\dot X^\perp\right\rangle+\frac{1}{2}\left\langle \gamma,\dot X'^\perp\right\rangle.
	\end{split}
\end{equation*}
Moreover,
\begin{equation*}
	\frac{1}{2}\int_0^{2L}\left\langle \gamma,\dot X'^\perp\right\rangle\,ds\overset{\eqref{zeroorderbcsecondvar}}{=}-\frac{1}{2}\int_{0}^{2L}\left\langle \dot\gamma, X'^\perp\right\rangle\,ds=\frac{1}{2}\int_{0}^{2L}\left\langle X',N\right\rangle\,ds.
\end{equation*}
Recalling \eqref{dotxexprinproof},
\begin{align*}
	\left \langle X',N\right\rangle&=\psi,\\
	\left\langle X,\dot X^\perp\right\rangle&=H\varphi^2+H\varphi_\tau^2+\varphi\dot\varphi_\tau-\dot\varphi\varphi_\tau,
\end{align*}
whence, from \eqref{volumediomegaincurva} we get
\begin{equation*}
	\begin{split}
		\left.\frac{d^2\A(\mathcal V(\cdot,t))}{dt^2}\right|_{t=0^+}&=\int_{0}^{2L}\left.\frac{\partial^2}{\partial t^2}\left(\frac{1}{2}\big\langle \gamma_t,\dot\gamma_t^\perp\big\rangle\right)\right|_{t=0^+}\,ds\\
		&=\int_{0}^{2L}\left(\left\langle X',N \right\rangle+\left\langle X,\dot X^\perp\right\rangle\right)\,ds\\
		&\overset{\eqref{zeroorderbcsecondvar}}{=}\int_0^{2L}H\varphi^2\,ds+\int_0^{2L}\left(\psi+H\varphi_\tau^2+2\varphi\dot\varphi_\tau\right)\,ds.\qedhere
	\end{split}
\end{equation*}
\end{proof}
\begin{remark}
\Cref{secvarvolthm} applies not only to critical points, but to any admissible curve.
\end{remark}
Combining \Cref{propsecvaroff} and \Cref{secvarvolthm}, we obtain the second variation of $\F-\lambda\A$ when evaluated at critical points.
\begin{corollary}\label{corollstbofg}
Fix $0<3x_0<L$. Let $\gamma=\gamma(x_0,L)$ be the unique critical point of \Cref{uniquecriticalthm}. Let $\lambda=\lambda(x_0,L)$ be as in \eqref{definitionoflambda}. Let $\mathcal V$ be an admissible variation of $\gamma$. Let $\varphi,\psi\in C^\infty[0,2L]$ be as in \eqref{quattrfunzioni}. Then 
\begin{equation} \label{secvarG}
	\begin{split}
		\left.\frac{d^2\left(\F-\lambda\A\right)(\mathcal V(\cdot,t))}{dt^2}\right|_{t=0}&=\int_0^{2L}\left((2-\lambda)H\varphi^2-\frac{2\dot\varphi^2}{H}+\frac{2\ddot\varphi^2}{H^3}\right)\,ds+\left[\frac{\dot\psi}{H^2}\right]_{0}^{2L}\\
		&\geq \int_0^{2L}\left((2-\lambda)H\varphi^2-\frac{2\dot\varphi^2}{H}+\frac{2\ddot\varphi^2}{H^3}\right)\,ds.
	\end{split}
\end{equation}
\end{corollary}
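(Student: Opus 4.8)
\textbf{Proof plan for \Cref{corollstbofg}.} The statement is a direct consequence of the additivity of the second variation together with \Cref{propsecvaroff} and \Cref{secvarvolthm}. The plan is as follows. First I would observe that, since $t\mapsto\F(\mathcal V(\cdot,t))$ and $t\mapsto\A(\mathcal V(\cdot,t))$ are smooth on $[0,t_{\mathcal V}]$, one has
\[
\left.\frac{d^2(\F-\lambda\A)(\mathcal V(\cdot,t))}{dt^2}\right|_{t=0}=\left.\frac{d^2\F(\mathcal V(\cdot,t))}{dt^2}\right|_{t=0}-\lambda\left.\frac{d^2\A(\mathcal V(\cdot,t))}{dt^2}\right|_{t=0}.
\]
Both terms on the right are expressed through the \emph{same} decomposition \eqref{quattrfunzioni} of $X$ and $X'$ along $N$ and $\dot\gamma$, and the boundary conditions \eqref{zeroorderbcsecondvar} hold for $\varphi,\varphi_\tau,\psi,\psi_\tau$ in both computations.

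Next I would substitute \eqref{secvarFbetter} (the admissible-variation form of \Cref{propsecvaroff}) and \eqref{secondvarvol} (from \Cref{secvarvolthm}, which applies to any admissible curve, in particular to $\gamma(x_0,L)$). The key cancellation is that the ``tangential/acceleration'' contribution
\[
\lambda\int_0^{2L}\left(\psi+H\varphi_\tau^2+2\varphi\dot\varphi_\tau\right)\,ds
\]
appears in $\left.\tfrac{d^2\F}{dt^2}\right|_{t=0}$ with a plus sign and in $-\lambda\left.\tfrac{d^2\A}{dt^2}\right|_{t=0}$ with a minus sign, so it disappears; what is left from the area term is $-\lambda\int_0^{2L}H\varphi^2\,ds$, which merges with the $2H\varphi^2$ summand coming from $\F$ to produce $(2-\lambda)H\varphi^2$. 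The boundary term $\bigl[\dot\psi/H^2\bigr]_0^{2L}$ is inherited verbatim from \eqref{secvarFbetter}, since the second variation of the area in \eqref{secondvarvol} carries no boundary contribution. This yields the claimed equality.

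Finally, for the inequality I would invoke the last part of \Cref{propsecvaroff}: since $\mathcal V$ is an admissible (two-sided) variation, $0$ is respectively a minimum of $t\mapsto\dot y_t(0)$ and a maximum of $t\mapsto\dot y_t(2L)$, which forces $\dot\psi(0)\le 0$ and $\dot\psi(2L)\ge 0$. Because $H$ is strictly positive (indeed bounded away from $0$) on $[0,2L]$ for the critical curve $\gamma(x_0,L)$, we get
\[
\left[\frac{\dot\psi}{H^2}\right]_0^{2L}=\frac{\dot\psi(2L)}{H(2L)^2}-\frac{\dot\psi(0)}{H(0)^2}\ge 0,
\]
which gives the stated lower bound. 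There is essentially no serious obstacle here: the only points requiring care are the sign bookkeeping in the cancellation of the tangential terms and the correct extraction of the boundary-term sign from admissibility, both of which are already supplied by \Cref{propsecvaroff}.
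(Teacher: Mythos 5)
Your proposal is correct and coincides with the paper's (implicit) argument: the corollary is obtained precisely by subtracting $\lambda$ times \eqref{secondvarvol} from \eqref{secvarFbetter}, so that the tangential term $\lambda\int_0^{2L}\left(\psi+H\varphi_\tau^2+2\varphi\dot\varphi_\tau\right)ds$ cancels and the remaining area contribution $-\lambda\int_0^{2L}H\varphi^2\,ds$ merges into $(2-\lambda)H\varphi^2$. The inequality from the sign conditions $\dot\psi(0)\le 0$, $\dot\psi(2L)\ge 0$ in \Cref{propsecvaroff} is likewise exactly what the paper uses.
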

\begin{remark}
Differently from \eqref{secvarFbetter}, the integral part of the second variation of $\F-\lambda \A$ depends uniquely on $\varphi$, the normal component of the velocity vector field of the variation. A similar phenomenon occurs when dealing with the area functional (cf. \cite{MR731682}).
\end{remark}
\subsection{Different notions of stability}\label{sec_difnotstab}
Owing to the second variation formulas of the previous section, we formulate the following notions of stability.
\begin{definition}\label{stabilitydefinition}
Fix $0<3x_0<L$. Let $\gamma=\gamma(x_0,L)$ be the unique critical point of \Cref{uniquecriticalthm}.

\begin{enumerate}
	\item [(i)] We say that $\gamma$ is an \emph{area-preserving stable critical point} of $\F$ if
	\begin{equation}\label{stabdef1}
		\left.\frac{d^2\F(\mathcal V(\cdot,t))}{dt^2}\right|_{t=0}\geq 0\quad\text{for any area-preserving admissible variation $\mathcal V$ of $\gamma$.}
	\end{equation}
	
	\item [(ii)]We say that $\gamma$ is a \emph{stable critical point} of $\F-\lambda\A$ if
	\begin{equation}\label{stabdef2}
		\left.\frac{d^2(\F-\lambda\A)(\mathcal V(\cdot,t))}{dt^2}\right|_{t=0}\geq 0\quad\text{for any admissible variation $\mathcal V$ of $\gamma$.}
	\end{equation}
	
	\item [(iii)] We say that $\gamma$ is a \emph{first-order area-preserving stable critical point} of $\F-\lambda\A$ if
	\begin{equation}\label{stabdef3}
		\left.\frac{d^2(\F-\lambda\A)(\mathcal V(\cdot,t))}{dt^2}\right|_{t=0}\geq 0\quad\text{for any first-order area-preserving admissible variation $\mathcal V$ of $\gamma$.}
	\end{equation}
	
\end{enumerate}
\end{definition}
Properties \eqref{stabdef1} and \eqref{stabdef2} of the above definition are, respectively, the natural notions of stability associated with $\F$ and $\F-\lambda\A$. On the other hand, property \eqref{stabdef2} is clearly stronger than property \eqref{stabdef3}, since stability is tested on a wider class of variations. With the next proposition, we show that properties \eqref{stabdef1} and \eqref{stabdef3} are actually equivalent.
\begin{proposition}\label{stabilityequivalent}
Fix $0<3x_0<L$. Let $\gamma=\gamma(x_0,L)$ be the unique critical point of \Cref{uniquecriticalthm}. The following are equivalent. 
\begin{enumerate}
	\item [(i)] $\gamma$ is an area-preserving stable critical point of $\F$.
	\item [(ii)] $\gamma$ is a first-order area-preserving stable critical point of $\F-\lambda\A$.
\end{enumerate}
\end{proposition}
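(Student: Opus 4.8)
The plan is to prove the two implications separately, leaning on three facts that are already available: the splitting $\F=(\F-\lambda\A)+\lambda\A$; the fact that along an area-preserving variation the second variation of $\A$ vanishes (and the first variation too, so such a variation is in particular first-order area-preserving, cf.\ \eqref{firstvarvol}, \eqref{vpthenfovp}); and \Cref{corollstbofg}, according to which $\left.\frac{d^2(\F-\lambda\A)(\mathcal V(\cdot,t))}{dt^2}\right|_{t=0}$ at the critical point $\gamma$ depends on the variation $\mathcal V$ only through the normal velocity $\varphi$ on $[0,2L]$ and through the boundary term $\left[\frac{\dot\psi}{H^2}\right]_0^{2L}$, i.e.\ only through the endpoint values $\dot\psi(0),\dot\psi(2L)$.

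For \textit{(ii) $\Rightarrow$ (i)}, I would take an arbitrary area-preserving admissible variation $\mathcal V$ of $\gamma$; by the remarks above it is first-order area-preserving, so (ii) applies to it, and moreover $\left.\frac{d^2\A(\mathcal V(\cdot,t))}{dt^2}\right|_{t=0}=0$. Hence
\[
\left.\frac{d^2\F(\mathcal V(\cdot,t))}{dt^2}\right|_{t=0}=\left.\frac{d^2(\F-\lambda\A)(\mathcal V(\cdot,t))}{dt^2}\right|_{t=0}+\lambda\left.\frac{d^2\A(\mathcal V(\cdot,t))}{dt^2}\right|_{t=0}=\left.\frac{d^2(\F-\lambda\A)(\mathcal V(\cdot,t))}{dt^2}\right|_{t=0}\geq 0,
\]
so $\gamma$ is an area-preserving stable critical point of $\F$.

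For \textit{(i) $\Rightarrow$ (ii)}, let $\mathcal V$ be a first-order area-preserving admissible variation of $\gamma$, with velocity $X=\varphi N+\varphi_\tau\dot\gamma$ and acceleration $X'=\psi N+\psi_\tau\dot\gamma$. I would invoke \Cref{docarmo} to produce an area-preserving admissible variation $\tilde{\mathcal V}$, with velocity $\tilde X=\tilde\varphi N+\tilde\varphi_\tau\dot\gamma$ and acceleration $\tilde X'=\tilde\psi N+\tilde\psi_\tau\dot\gamma$, such that $\tilde X=X$ on $[0,2L]$ and $\tilde X'=X'$ on $\left[0,\frac L2\right]\cup\left[\frac{3L}2,2L\right]$; in particular $\tilde\varphi\equiv\varphi$ on $[0,2L]$ and, since $\tilde X'$ agrees with $X'$ near $0$ and $2L$, also $\dot{\tilde\psi}(0)=\dot\psi(0)$ and $\dot{\tilde\psi}(2L)=\dot\psi(2L)$. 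Applying \eqref{secvarG} to $\mathcal V$ and to $\tilde{\mathcal V}$ then yields
\[
\left.\frac{d^2(\F-\lambda\A)(\mathcal V(\cdot,t))}{dt^2}\right|_{t=0}=\int_0^{2L}\left((2-\lambda)H\varphi^2-\frac{2\dot\varphi^2}{H}+\frac{2\ddot\varphi^2}{H^3}\right)ds+\left[\frac{\dot\psi}{H^2}\right]_{0}^{2L}=\left.\frac{d^2(\F-\lambda\A)(\tilde{\mathcal V}(\cdot,t))}{dt^2}\right|_{t=0}.
\]
Since $\tilde{\mathcal V}$ is area-preserving, its second variation of $\A$ vanishes, so the rightmost quantity equals $\left.\frac{d^2\F(\tilde{\mathcal V}(\cdot,t))}{dt^2}\right|_{t=0}$, which is nonnegative by (i). Therefore $\left.\frac{d^2(\F-\lambda\A)(\mathcal V(\cdot,t))}{dt^2}\right|_{t=0}\geq0$, and $\gamma$ is a first-order area-preserving stable critical point of $\F-\lambda\A$.

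The arguments are mere bookkeeping once \Cref{docarmo} and \Cref{corollstbofg} are available. The only step requiring care — and the only genuine interaction between the ingredients — is that \Cref{docarmo} matches the accelerations of $\mathcal V$ and $\tilde{\mathcal V}$ merely on $\left[0,\frac L2\right]\cup\left[\frac{3L}2,2L\right]$ rather than on all of $[0,2L]$; this is exactly enough, because by \Cref{corollstbofg} the second variation of $\F-\lambda\A$ at $\gamma$ feels the acceleration only through the endpoint values $\dot\psi(0),\dot\psi(2L)$, which are already determined by $\psi$ near $0$ and $2L$.
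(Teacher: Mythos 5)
Your proposal is correct and follows essentially the same route as the paper: the direction (i) $\Rightarrow$ (ii) is exactly the paper's argument (Lemma \ref{docarmo} plus the fact that, by \eqref{docarmopropultimateequation} and \eqref{secvarG}, the second variation of $\F-\lambda\A$ sees the acceleration only through $\dot\psi(0),\dot\psi(2L)$), while for (ii) $\Rightarrow$ (i) your use of the splitting $\F=(\F-\lambda\A)+\lambda\A$ together with the vanishing of $\left.\frac{d^2\A(\mathcal V(\cdot,t))}{dt^2}\right|_{t=0}$ along area-preserving variations is just a cleaner bookkeeping of the paper's substitution of \eqref{davolpresafirstordervolpres} into \eqref{secvarFbetter}. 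No gaps.
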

\begin{proof}
First we prove $(ii)\implies (i)$. Let $\mathcal{V}$ be an area-preserving admissible variation of $\gamma$. Let the functions $\varphi,\varphi_\tau,\psi\in C^\infty[0,2L]$ be as in \eqref{quattrfunzioni}. As $\A(\mathcal V(\cdot,t))$ is constant in $t$, \eqref{firstvarvol} and \eqref{secondvarvol} yield
\begin{equation}\label{davolpresafirstordervolpres}
	\int_0^{2L}\varphi\,ds=0\qquad\text{and}\qquad \int_0^{2L}\left(\psi+H\varphi_\tau^2+2\varphi\dot\varphi_\tau\right)\,ds=-\int_0^{2L}H\varphi^2\,ds,
\end{equation}
whence
\begin{equation*}
	\begin{split}
		\left.\frac{d^2\F(\mathcal V(\cdot,t))}{dt^2}\right|_{t=0}&\overset{\eqref{secvarFbetter}}{=}\int_0^{2L}\left(2H\varphi^2-\frac{2\dot\varphi^2}{H}+\frac{2\ddot\varphi^2}{H^3}\right)\,ds+\lambda\int_0^{2L}\left(\psi+H\varphi_\tau^2+2\varphi\dot\varphi_\tau\right)\,ds+\left[\frac{\dot\psi}{H^2}\right]_{0}^{2L}\\
		&\overset{\eqref{davolpresafirstordervolpres}}{=}\int_0^{2L}\left((2-\lambda)H\varphi^2-\frac{2\dot\varphi^2}{H}+\frac{2\ddot\varphi^2}{H^3}\right)\,ds+\left[\frac{\dot\psi}{H^2}\right]_{0}^{2L}\\
		&\overset{\eqref{secvarG}}{=}\left.\frac{d^2(\F-\lambda\A)(\mathcal V(\cdot,t))}{dt^2}\right|_{t=0}\\
		&\geq 0.
	\end{split}
\end{equation*}
To prove $(i)\implies (ii)$, let $\mathcal{V}$ be a first-order area-preserving admissible variation of $\gamma$. Let $ \mathcal {\tilde V}$ be the area-preserving variation of \Cref{docarmo}. Then 
\begin{equation*}
	\left.\frac{d^2(\F-\lambda\A)(\mathcal V(\cdot,t))}{dt^2}\right|_{t=0}\overset{\eqref{docarmopropultimateequation},\eqref{secvarG}}{=} \left.\frac{d^2(\F-\lambda\A)(\mathcal {\tilde V}(\cdot,t))}{dt^2}\right|_{t=0}=\left.\frac{d^2F(\mathcal {\tilde V}(\cdot,t))}{dt^2}\right|_{t=0}\geq 0,
\end{equation*}
an the thesis follows.
\end{proof}

\section{Stability of critical points}\label{sec_stability}
In this section, we show that the critical points emerging from \Cref{uniquecriticalthm} are stable critical points of $\F-\lambda\A$, i.e. the strongest sense of stability proposed in \Cref{stabilitydefinition}. This fact constitutes a remarkable difference with the case of the area functional. Indeed, spherical caps in $\rr^{n}$ may be stable for volume-preserving variations of the area functional but unstable for general variations of the perturbed functional associated with the area functional (cf. \cite{MR731682}). Our approach consists in providing sharp lower bounds of the form
\begin{equation}\label{sharplowerbound}
\mu_{\mathcal W}\int_0^{2L}\frac{2\dot\varphi^2}{H}\,ds\leq \int_0^{2L}\left(\frac{2\ddot\varphi^2}{H^3}+(2-\lambda)H \varphi^2\right)\,ds,
\end{equation}
where $\varphi$ belongs to a chosen functional space $\mathcal W$. The nature of the information we can infer from \eqref{sharplowerbound} depends on the choice of $\mathcal W$ and on $\mu_\mathcal W$.
\begin{example}[Stability]\label{examplestability}
Let $\mathcal V$ be an admissible variation of $\gamma$ with velocity vector field $X=\varphi N+\varphi_\tau\dot\gamma$. We already know that $\varphi(0),\varphi(2L)=0$ and $\dot\varphi(0),\dot\varphi(2L)=0$. Therefore, setting 
\begin{equation*}
	\mathcal W_1=\left\{\varphi\in W^{2,2}(0,2L)\,:\,\varphi(0),\varphi(2L)=0,\,\dot\varphi(0),\dot\varphi(2L)=0\right\},
\end{equation*}
if \eqref{sharplowerbound} holds with $\mu_{\mathcal W_1}\geq 1$, then \Cref{corollstbofg} implies that $\gamma$ is a stable critical point of $\F-\lambda\A$.
\end{example}
\begin{example}[One-sided area-preserving stability]\label{onesidedexample}
Let $\mathcal V$ be a one-sided, first-order area-preserving admissible variation of $\gamma$. Let $\varphi$ be as in \eqref{quattrfunzioni}. Then $\varphi(0),\varphi(2L)=0$ and $\int_0^{2L}\varphi=0$. Therefore, setting 
\begin{equation*}
	\mathcal W_2=\left\{\varphi\in W^{2,2}(0,2L)\,:\,\varphi(0)=\varphi(2L)=0,\,\int_0^{2L}\varphi=0\right\},
\end{equation*}
if \eqref{sharplowerbound} holds with $\mu_{\mathcal W_2}\geq 1$, then \Cref{corollstbofg} and \Cref{stabilityequivalent} imply that  \begin{equation}\label{onesidedstabilitiquasidef}
	\left.\frac{d^2\F(\mathcal V(\cdot,t))}{dt^2}\right|_{t=0^+}\geq 0
\end{equation}
for any area-preserving one-sided admissible variation $\mathcal V$ of $\gamma$. Although improperly, we may refer to \eqref{onesidedstabilitiquasidef} as one-sided area-preserving stability.
\end{example}
Being interested in stability, we mainly focus on \Cref{examplestability}. Indeed, although possibly interesting, the one-sided stability proposed in \Cref{onesidedexample} is not necessary to minimality (roughly speaking, the second derivative of a function at a boundary minimum point may be either positive or negative). Throughout this section, we may adopt the notation
\begin{equation*}
r=\frac{2}{H^3},\qquad q=\frac{2}{H},\qquad p=(2-\lambda) H.
\end{equation*}
\subsection{Existence of minimizers for \eqref{sharplowerbound}} \label{sec_exminprob}
Fix $0<3x_0<L$. Let $\gamma=\gamma(x_0,L)$ be the unique critical point of \Cref{uniquecriticalthm}. Consider the space $\mathcal S=W^{2,2}(0,2L)\cap W^{1,2}_0(0,2L)$. We endow it with the norm 
\begin{equation*}
\|u\|^2_{\mathcal S}=\|u\|^2_{W^{2,2}(0,2L)}=\|u\|^2_{L^2(0,2L)}+\|\dot u\|^2_{L^2(0,2L)}+\|\ddot u\|^2_{L^2(0,2L)}.
\end{equation*}
Denote by $\mathcal W$ any closed subspace of $\mathcal S$. 
Set 
\begin{equation*}
a(u,v)=\int_0^{2L}\frac{2\dot u\dot v}{H}\,ds,\qquad    b(u,v)=\int_0^{2L}\left(\frac{2\ddot u\ddot v}{H^3}+(2-\lambda)H u v\right)\,ds\qquad\text{for any $u,v\in \mathcal W$.}
\end{equation*}
Consider 
\begin{equation}\label{minprobstab}
\mu_{\mathcal W}=\inf_{u\in\mathcal W,u\neq 0}\frac{b(u,u)}{a(u,u)}.
\end{equation}
First, we show that \eqref{minprobstab} attains its minimum.
\begin{proposition}\label{exminstab}
Let $\mathcal W$ be any closed subspace of $\mathcal S$.   Then, there exists $u\in\mathcal W\setminus\{0\}$ such that
\begin{equation*}
	\mu_{\mathcal W}=\frac{b(u,u)}{a(u,u)}.
\end{equation*}
In particular, $\mu_{\mathcal W}>0$.
\end{proposition}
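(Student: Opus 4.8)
The plan is to run the direct method of the calculus of variations on the Rayleigh quotient $b(u,u)/a(u,u)$. First I would record the structural inequalities both forms enjoy. Since $\gamma=\gamma(x_0,L)$ is admissible and smooth, its curvature $H$ is strictly positive and bounded on the compact interval $[0,2L]$, say $0<c_1\le H\le c_2$, and since $\lambda=\lambda(x_0,L)=\frac{2L}{L+x_0}<2$ we have $2-\lambda>0$. Hence
\[
a(u,u)\ge\frac{2}{c_2}\,\|\dot u\|_{L^2(0,2L)}^2,\qquad
b(u,u)\ge\frac{2}{c_2^{3}}\,\|\ddot u\|_{L^2(0,2L)}^2+(2-\lambda)\,c_1\,\|u\|_{L^2(0,2L)}^2\ge 0 .
\]
Moreover $a(u,u)>0$ whenever $u\in\mathcal W\setminus\{0\}$: if $\dot u\equiv 0$ then $u$ is constant, hence identically zero since $u(0)=0$. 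Thus, assuming $\mathcal W\ne\{0\}$ (otherwise the statement is vacuous), the quotient is well-defined and $0\le\mu_{\mathcal W}\le b(v,v)/a(v,v)<\infty$ for any fixed $v\in\mathcal W\setminus\{0\}$.

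Next I would take a minimizing sequence $(u_k)\subset\mathcal W\setminus\{0\}$ and, using that $a$ and $b$ are $2$-homogeneous and $\mathcal W$ is a linear subspace, rescale it so that $a(u_k,u_k)=1$; then $b(u_k,u_k)\to\mu_{\mathcal W}$, and in particular $b(u_k,u_k)$ is bounded. The displayed inequalities bound $\|\dot u_k\|_{L^2}$ and $\|\ddot u_k\|_{L^2}$, and the Poincar\'e inequality on $W^{1,2}_0(0,2L)$ bounds $\|u_k\|_{L^2}$, so $(u_k)$ is bounded in $W^{2,2}(0,2L)$. Since $\mathcal W$ is a closed subspace of the Hilbert space $\mathcal S$, and $\mathcal S$ is itself closed in $W^{2,2}(0,2L)$ (the boundary evaluations $u\mapsto u(0),u(2L)$ being continuous on $W^{2,2}(0,2L)$), the subspace $\mathcal W$ is weakly closed in $W^{2,2}(0,2L)$; hence a subsequence satisfies $u_k\rightharpoonup u$ weakly in $W^{2,2}(0,2L)$ with $u\in\mathcal W$, and correspondingly $\ddot u_k\rightharpoonup\ddot u$ in $L^2(0,2L)$.

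The crucial step is passing to the limit in the quotient. I would use the compactness of the embedding $W^{2,2}(0,2L)\hookrightarrow C^1([0,2L])$ — a one-dimensional Sobolev phenomenon — to deduce $u_k\to u$ and $\dot u_k\to\dot u$ strongly in $L^2(0,2L)$. This gives $a(u,u)=\lim_k a(u_k,u_k)=1$, so in particular $u\ne 0$, together with $\int_0^{2L}(2-\lambda)Hu_k^2\to\int_0^{2L}(2-\lambda)Hu^2$; combining with the weak lower semicontinuity of the convex, $L^2$-continuous functional $w\mapsto\int_0^{2L}\frac{2w^2}{H^3}$ applied along $\ddot u_k\rightharpoonup\ddot u$, I obtain $b(u,u)\le\liminf_k b(u_k,u_k)=\mu_{\mathcal W}$. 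Since $b(u,u)\ge\mu_{\mathcal W}\,a(u,u)=\mu_{\mathcal W}$ by definition of the infimum, equality holds and $u$ is the sought minimizer. Strict positivity then follows: if $\mu_{\mathcal W}=0$, then $b(u,u)=0$, and since both summands of $b(u,u)$ are nonnegative with $2-\lambda>0$ and $H>0$, this forces $\int_0^{2L}Hu^2=0$, i.e.\ $u\equiv 0$, contradicting $a(u,u)=1$.

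The main obstacle is the potential loss of compactness: weak lower semicontinuity alone only gives $a(u,u)\le\liminf_k a(u_k,u_k)$, which a priori allows $u=0$ and a "mass escaping" scenario that would prevent the infimum from being attained. It is precisely the compactness of $W^{2,2}(0,2L)\hookrightarrow C^1([0,2L])$ (equivalently $W^{2,2}\hookrightarrow W^{1,2}$ in dimension one) that upgrades the convergence of $\dot u_k$ to strong $L^2$ convergence, pins down $a(u,u)=1$, and hence guarantees that the weak limit is a nonzero competitor; the two-sided bound $0<c_1\le H\le c_2$ together with $\lambda<2$ then supplies both the coercivity of the minimizing sequence and the final strict positivity, and everything else is the routine machinery of the direct method.
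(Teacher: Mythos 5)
Your proposal is correct and follows essentially the same route as the paper: the direct method with the normalization $a(u_h,u_h)=1$, coercivity from $0<\min H\le\max H<\infty$ and $2-\lambda>0$, weak compactness in $W^{2,2}(0,2L)$, strong convergence of $u_h$ and $\dot u_h$ via the compact one-dimensional embedding, and weak lower semicontinuity of the $\ddot u$-term. The only cosmetic difference is that you bound $\|\dot u_h\|_{L^2}$ from the constraint $a(u_h,u_h)=1$, whereas the paper derives it from $\|\ddot u_h\|_{L^2}$ via Poincar\'e--Wirtinger applied to $\dot u$ (which has zero mean since $u(0)=u(2L)=0$); both yield the same boundedness and the same conclusion, including the strict positivity of $\mu_{\mathcal W}$.
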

\begin{proof}
Since $\mathcal S$ is a closed subspace of $W^{2,2}(0,2L)$, it is a reflexive Banach space. Since $\mathcal W$ is closed, it is a reflexive Banach space as well.  By definition, $\mu_{\mathcal W}\geq 0$. Let $(u_h)_h\subseteq \mathcal W$ be a minimizing sequence. Since $a$ and $b$ are $2$-homogeneous, we assume that $a(u_h,u_h)=1$ for any $h$. Up to a subsequence, 
\begin{equation*}
	b(u_h,u_h)\leq \mu_{\mathcal W}+1.
\end{equation*}
We claim that there exists $c_1>0$ such that
\begin{equation}\label{ineqforcoerc}
	c_1\|u\|^2_\mathcal S\leq \|u\|^2_{L^2(0,2L)}+\|\ddot u\|^2_{L^2(0,2L)}\qquad\text{  for any $u\in \mathcal S$.}
\end{equation}
Indeed, let $u\in S$. As $u(0)=u(2L)=0$, then $\int_0^{2L}\dot u\,ds=0$. Therefore, by Poincaré-Wirtinger's inequality (cf. \cite{MR2759829}), there exists $c_2>0$ such that
\begin{equation*}
	\|\dot u\|_{L^2(0,2L)}\leq c_2\|\ddot u\|_{L^2(0,2L)},
\end{equation*}
and \eqref{ineqforcoerc} follows.
Therefore, there exists $\tilde c>0$ such that
\begin{equation*}
	b(u_h,u_h)\geq \min\left\{\min_{[0,2L]}\frac{2}{H^3},\min_{[0,2L]}(2-\lambda)H\right\}\left(\|u_h\|^2_{L^2(0,2L)}+\|\ddot u_h\|^2_{L^2(0,2L)}\right)\overset{\eqref{ineqforcoerc}}{\geq} \tilde c\|u_h\|^2_\mathcal S
\end{equation*}
In particular, $(u_h)_h$ is bounded. By reflexivity, up to a subsequence, there exists $u\in \mathcal W$ such that:
\begin{enumerate}
	\item [(i)]$u_h\to u$ uniformly on $[0,2L]$;
	\item [(ii)]$\dot u_h\to \dot u$ uniformly on $[0,2L]$;
	\item [(iii)]$\ddot u_h\to \ddot u$ weakly in $L^2(0,2L)$.
\end{enumerate}
Since 
\begin{equation*}
	v\mapsto\int_0^{2L}(2-\lambda)H v^2\,ds,\qquad v\mapsto\int_0^{2L}\frac{2\dot v^2}{H}\,ds\qquad\text{and }\qquad v\mapsto\int_0^{2L}\frac{2\ddot v^2}{H^3}\,ds
\end{equation*}
are, respectively, continuous with respect to the strong convergence of $L^2(0,2L)$, continuous with respect to the strong convergence of $L^2(0,2L)$, and lower-semicontinuous with respect to the weak convergence of $L^2(0,2L)$, we conclude that $a(u,u)=1$, and
\begin{equation*}
	\mu_{\mathcal W}\leq b(u,u)\leq\liminf_{h\to\infty}b(u_h,u_h)=\mu_{\mathcal W}.\qedhere
\end{equation*}
\end{proof}
\subsection{Euler-Lagrange equations and regularity}\label{sec_elforstab}
Next, we characterize $\mu_\mathcal W$ in terms of the Euler-Lagrange equation associated with \eqref{sharplowerbound}.
Indeed, if $u\in\mathcal W$ is a minimizer of \eqref{minprobstab}, then $u$ minimizes the functional
\begin{equation*}
v\mapsto b(v,v)-\mu_{\mathcal W} a(v,v)= \int_0^{2L}\left(\frac{2\ddot v^2}{H^3}-\mu_{\mathcal W}\frac{2\dot v^2}{H}+(2-\lambda)H v^2\right)\,ds,\qquad v\in\mathcal W.
\end{equation*}
Therefore it satisfies the Euler-Lagrange equation
\begin{equation}\label{weakelstab}
\int_0^{2L}\left(r \ddot u\ddot v-\mu_{\mathcal W} q\dot u\dot v+pu v\right)\,ds=0,\qquad \text{for any }v\in \mathcal W.
\end{equation}
\begin{proposition}
Let $\mathcal W$ be a closed subspace of $\mathcal S$ satisfying 
\begin{equation}\label{contieneleamedianulla}
	\left\{\varphi\in C^\infty_c(0,2L)\,:\,\int_0^{2L}\varphi\,ds=0\right\}\subseteq\mathcal W.
\end{equation}
Assume that $u\in\mathcal W$ solves \eqref{weakelstab}. Then $u\in C^\infty[0,2L]$.
\end{proposition}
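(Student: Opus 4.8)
The plan is to convert the constrained weak formulation \eqref{weakelstab} into a genuine distributional ODE with a constant right-hand side, and then run a one-dimensional elliptic bootstrap. First I would record that, since $\gamma=\gamma(x_0,L)$ is the explicit curve of \Cref{scoprirepunticritici}, its curvature $H(s)=\pi/(\sigma\sqrt{\k^2-(s-L)^2})$ is smooth and bounded below by a positive constant on $[0,2L]$ (recall $\k>L$). Consequently $r=2/H^3$, $q=2/H$ and $p=(2-\lambda)H$ are all smooth on $[0,2L]$, and both $r$ and $1/r=H^3/2$ are bounded away from $0$; this is exactly what makes the bootstrap close.

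Next I would restrict the test functions in \eqref{weakelstab} to $v\in C^\infty_c(0,2L)$ with $\int_0^{2L}v\,ds=0$, which are admissible by \eqref{contieneleamedianulla}. For such $v$, integrating by parts in the sense of distributions (legitimate since $v$ has compact support, while $r\ddot u,\,q\dot u,\,pu\in L^1_{loc}(0,2L)$) rewrites \eqref{weakelstab} as
\begin{equation*}
	\big\langle (r\ddot u)''+\mu_{\mathcal W}(q\dot u)'+p u,\,v\big\rangle=0 .
\end{equation*}
Call $T\in\mathcal D'(0,2L)$ the distribution inside the bracket. Fixing once and for all $\chi\in C^\infty_c(0,2L)$ with $\int_0^{2L}\chi\,ds=1$ and decomposing an arbitrary $\varphi\in C^\infty_c(0,2L)$ as $\varphi=\big(\varphi-(\int_0^{2L}\varphi)\,\chi\big)+(\int_0^{2L}\varphi)\,\chi$, the first summand having zero mean, I obtain $\langle T,\varphi\rangle=c\int_0^{2L}\varphi\,ds$ with $c:=\langle T,\chi\rangle$. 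Hence
\begin{equation*}
	(r\ddot u)''+\mu_{\mathcal W}(q\dot u)'+p u=c\qquad\text{in }\mathcal D'(0,2L)
\end{equation*}
for some constant $c\in\rr$ (which vanishes whenever $C^\infty_c(0,2L)\subseteq\mathcal W$, as in the case $\mathcal W=\mathcal W_1$).

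Finally I would bootstrap. Writing $w:=r\ddot u$, the equation reads $w''=c-\mu_{\mathcal W}(q\dot u)'-p u$; since $u\in W^{2,2}(0,2L)$ we have $q\dot u\in W^{1,2}$, so $(q\dot u)'\in L^2$, and $pu\in L^2$, whence $w''\in L^2$ and $w\in W^{2,2}(0,2L)$. As $\ddot u=w/r$ with $1/r\in C^\infty[0,2L]$, the product rule for Sobolev functions on a bounded interval gives $\ddot u\in W^{2,2}$, i.e. $u\in W^{4,2}(0,2L)$. Feeding this back in — now $(q\dot u)'\in W^{2,2}$ and $pu\in W^{4,2}$, so $w''\in W^{2,2}$, $w\in W^{4,2}$, $\ddot u\in W^{4,2}$, $u\in W^{6,2}$ — and iterating, $u\in W^{k,2}(0,2L)$ for every $k\in\nn$. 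By the Sobolev embedding $W^{k,2}(0,2L)\hookrightarrow C^{k-1}([0,2L])$ we conclude $u\in C^\infty[0,2L]$.

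The only slightly delicate point is the passage to the constrained test functions: because \eqref{weakelstab} only sees $v\in\mathcal W$, one cannot test directly against all of $C^\infty_c(0,2L)$ and must introduce the Lagrange-multiplier constant $c$ via the fixed-bump decomposition above; everything else is the standard one-dimensional bootstrap, which runs smoothly precisely because $H$, and hence $r,q,p$, are smooth with $H$ strictly positive on the whole of $[0,2L]$.
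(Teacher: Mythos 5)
Your argument is correct and follows essentially the same route as the paper: the fixed-bump decomposition to handle the mean-zero constraint is exactly the paper's trick of subtracting a multiple of a fixed $\psi$ with $\int_0^{2L}\psi\,ds\neq 0$ (producing the Lagrange-multiplier constant), and your bootstrap via $w=r\ddot u\in W^{2,2}$, $\ddot u=w/r$, is the paper's conclusion that $r\ddot u\in W^{2,2}$ hence $u\in H^4$, iterated. The only cosmetic difference is that you phrase the equation distributionally while the paper keeps the derivatives on the test function and invokes \cite[Theorem 8.8]{MR2759829} at the end; both hinge on the same facts that $H$ is smooth and strictly positive on $[0,2L]$, so $r$, $q$, $p$ and $1/r$ are smooth.
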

\begin{proof}
Set $f=-\mu_{\mathcal W}\dot q\dot u-\mu_{\mathcal W} q\ddot u-p u$. Then $f\in L^2(0,2L)$. By \eqref{contieneleamedianulla} and \eqref{weakelstab},
\begin{equation*}
	T(\varphi):=   \int_0^{2L}r\ddot u\ddot \varphi\,ds-\int_0^{2L}f\varphi\,ds=0\qquad\text{for any }\varphi\in C^\infty_c(0,2L)\text{ such that }\int_0^{2L}\varphi\,ds=0.
\end{equation*}
Let $\varphi\in C^\infty_c(0,2L)$. Let $\psi\in C^\infty_c(0,2L)$ be such that $\int_0^{2L}\psi\,ds\neq 0$, and set $$\varphi_0=\varphi-\frac{\int_0^{2L}\varphi\,ds}{\int_0^{2L}\psi\,ds}\psi$$
Then $\int_0^{2L}\varphi_0\,ds=0$, so that 
\begin{equation*}
	0=T(\varphi_0)=T(\varphi)-\frac{\int_0^{2L}\varphi\,ds}{\int_0^{2L}\psi\,ds} T(\psi)=\int_0^{2L}r\ddot u\ddot \varphi\,ds-\int_0^{2L}\left(f+\frac{T(\psi)}{\int_0^{2L}\psi\,d\sigma}\right)\varphi\,ds.
\end{equation*}
In particular, $r\ddot u\in W^{2,2}(0,2L)$, whence $u\in H^4(0,2L)$. Bootstrapping this argument, $u\in H^{2k}(0,2L)$ for any $k\in\mathbb N$. By \cite[Theorem 8.8]{MR2759829}, $u\in C^\infty[0,2L]$.
\end{proof}
Let $u \in C^\infty[0,2L]$ be any solution to \eqref{weakelstab}. Then,  we integrate by parts \eqref{weakelstab} to get
\begin{equation}\label{weakstabwithbc}
\int_0^{2L}\left(\left(r \ddot u\right)''+\mu_{\mathcal W} \left(q \dot u\right)'+pu\right)v\,ds+\left[r\ddot u\dot v\right]^{2L}_0=0\qquad \text{for any $v\in \mathcal W$}.
\end{equation}
In particular, since we are assuming \eqref{contieneleamedianulla}, there exists $\nu\in\rr$ such that 
\begin{equation}\label{primavoltaequazionenonhom}
\left(r \ddot u\right)''+\mu_{\mathcal W} \left(q \dot u\right)'+pu=\nu\qquad\text{on }(0,2L).
\end{equation}
\subsection{Solutions to \eqref{primavoltaequazionenonhom}}\label{sec_soluzioniiiiiiiiiii} In this section we find the general solution to \begin{equation}\label{secondtimenonhom}
\left(r \ddot u\right)''+\mu \left(q \dot u\right)'+pu=\nu\qquad\text{on }(0,2L),
\end{equation}
where $\mu>0$ and $\nu\in\rr$. We begin by finding the general solution to its homogeneous counterpart, namely 
\begin{equation}\label{onlyequation}
\left(r \ddot u\right)''+\mu \left(q \dot u\right)'+pu=0\qquad\text{on }(0,2L),
\end{equation}
\begin{proposition}\label{soluzgenrralestab}
Let $\mu_0=-\frac{x_0}{L+x_0}+2\sqrt{\frac{x_0}{L+x_0}}$. Then, the following facts hold.
\begin{enumerate}
	\item [(i)] Assume that $\mu<\mu_0$.
	Then any solution to \eqref{onlyequation} is a linear combination of the functions    \begin{equation}\label{soluzionicasocomplesso}
		u_1=\cos \alpha\theta\sinh\beta\theta,\qquad
		u_2=\sin \alpha\theta\cosh\beta\theta,\qquad
		u_3= \sin \alpha\theta\sinh\beta\theta,\qquad
		u_4=\cos \alpha\theta\cosh\beta\theta,
	\end{equation}
	where
	\begin{equation}\label{alfabetacomplesso}
		\alpha=\frac{1}{2}\sqrt{\mu+\frac{x_0}{L+x_0}+2\sqrt{\frac{x_0}{L+x_0}}},\qquad\beta=\frac{1}{2}\sqrt{-\mu-\frac{x_0}{L+x_0}+2\sqrt{\frac{x_0}{L+x_0}}}.
	\end{equation}
	\item [(ii)] Assume that $\mu=\mu_0$.
	Then any solution to \eqref{onlyequation} is a linear combination of the functions     \begin{equation}\label{soluzionicasocritico}
		u_1=\sin\alpha \theta,\qquad
		u_2=\cos \alpha\theta,\qquad
		u_3= \theta\sin \alpha\theta,\qquad
		u_4=\theta\cos \alpha\theta.
	\end{equation}
	\item [(iii)] Assume that $\mu>\mu_0$.
Then any solution to \eqref{onlyequation} is a linear combination of the functions
\begin{equation}\label{soluzionicasoreale}
	u_1=\sin\left(\alpha-\gamma\right) \theta,\qquad
	u_2=\sin \left(\alpha+\gamma\right)\theta,\qquad
	u_3= \cos (\alpha-\gamma)\theta,\qquad
	u_4=\cos (\alpha+\gamma)\theta,
\end{equation}
where
\begin{equation}\label{defigamma}
	\gamma=\frac{1}{2}\sqrt{\mu+\frac{x_0}{L+x_0}-2\sqrt{\frac{x_0}{L+x_0}}}.
\end{equation}
\end{enumerate}
\end{proposition}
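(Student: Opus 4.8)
The plan is to pass to the angular coordinate $\theta(s)=\int_0^s H(l)\,dl$ already used in \Cref{scoprirepunticritici}, in which the fourth-order operator in \eqref{onlyequation} becomes constant-coefficient, and then to solve the associated characteristic equation. Set $a=\sqrt{x_0/(L+x_0)}=\sigma/\pi\in(0,1/2)$, so that $2-\lambda=2a^2$ and $\mu_0=2a-a^2$. Recalling from the proof of \Cref{scoprirepunticritici} that for $\gamma=\gamma(x_0,L)$ one has $\lambda=2L/(L+x_0)$, $\sqrt{-A}=a$ and $\arcsin\tfrac{s-L}{\k}=a(\theta-\pi)$, I would first rewrite
\[
H=\frac{1}{a\k\cos\phi},\qquad \frac1H=a\k\cos\phi,\qquad \partial_\theta H=aH\tan\phi,\qquad \phi:=a(\theta-\pi)=\arcsin\tfrac{s-L}{\k},
\]
noting that $\phi$ ranges in $[-\sigma,\sigma]\subset(-\pi/2,\pi/2)$ (here $L>3x_0$ forces $\sigma<\pi/2$), so $\cos\phi>0$ throughout, and $\theta$ is a smooth increasing diffeomorphism of $[0,2L]$ onto $[0,2\pi]$.

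Writing $\partial_\theta$ for differentiation in $\theta$ and using $\tfrac{d}{ds}=H\partial_\theta$ together with $r=2H^{-3}$, $q=2H^{-1}$, $p=(2-\lambda)H=2a^2H$, I would compute step by step
\[
q\dot u=2\partial_\theta u,\qquad (q\dot u)'=2H\,\partial_\theta^2u,\qquad r\ddot u=\frac{2}{H}\bigl(\partial_\theta^2u+a\tan\phi\,\partial_\theta u\bigr),
\]
and then, using $\sec^2\phi-\tan^2\phi=1$ to collapse the $\phi$-dependent terms,
\[
(r\ddot u)'=2\bigl(\partial_\theta^3u+a^2\,\partial_\theta u\bigr),\qquad (r\ddot u)''=2H\bigl(\partial_\theta^4u+a^2\,\partial_\theta^2u\bigr).
\]
Summing and dividing by $2H>0$, \eqref{onlyequation} is equivalent to the constant-coefficient ODE $\partial_\theta^4u+(a^2+\mu)\,\partial_\theta^2u+a^2u=0$ on $\theta\in(0,2\pi)$, and a basis of solutions in $\theta$ transports, via $\theta=\theta(s)$, to a basis of solutions of \eqref{onlyequation} in $s$.

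It then remains to examine the characteristic polynomial $X^4+(a^2+\mu)X^2+a^2$. With $Y=X^2$ one gets $Y^2+(a^2+\mu)Y+a^2=0$, whose roots satisfy $Y_+Y_-=a^2>0$ and $Y_++Y_-=-(a^2+\mu)<0$, with discriminant $(a^2+\mu)^2-4a^2$ that vanishes precisely when $a^2+\mu=2a$, i.e. when $\mu=\mu_0$. If $\mu<\mu_0$, the discriminant is negative, $Y_\pm$ are complex conjugates with $|Y_\pm|=a$, so the four roots of the quartic are $\pm\beta\pm i\alpha$ with $\alpha^2+\beta^2=a$ and $\alpha^2-\beta^2=(a^2+\mu)/2$; solving these for $\alpha^2,\beta^2$ gives exactly \eqref{alfabetacomplesso} (with $\beta$ real since $\mu<\mu_0$) and the basis \eqref{soluzionicasocomplesso}. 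If $\mu=\mu_0$, the polynomial equals $(X^2+a)^2$, so $\pm i\sqrt a$ are double roots and the basis is \eqref{soluzionicasocritico} with $\alpha=\sqrt a$. If $\mu>\mu_0$, the discriminant is positive and $Y_\pm$ are distinct negative reals, hence the four roots are the purely imaginary numbers $\pm i\sqrt{-Y_\pm}$; writing $\sqrt{-Y_+}=\alpha-\gamma$, $\sqrt{-Y_-}=\alpha+\gamma$ and using $\sqrt{-Y_+}\sqrt{-Y_-}=a$, $(-Y_+)+(-Y_-)=a^2+\mu$ yields $\alpha^2-\gamma^2=a$, $\alpha^2+\gamma^2=(a^2+\mu)/2$, whence \eqref{defigamma} (with $\gamma$ real since $\mu>\mu_0$) and the basis \eqref{soluzionicasoreale}.

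The genuinely delicate point is the passage to the $\theta$-coordinate: one must verify that all coefficients depending on $\phi$ cancel. This happens in the computation of $(r\ddot u)'$, where the two terms carrying $a\tan\phi$ cancel and the pair $a^2\sec^2\phi\,\partial_\theta u-a^2\tan^2\phi\,\partial_\theta u$ collapses to $a^2\partial_\theta u$; this is precisely the manifestation of the Euler--Lagrange equation \eqref{equazionepuntocriticoinlemma} satisfied by $H$ along $\gamma(x_0,L)$, and it is what makes the analysis tractable. Everything downstream is elementary linear ODE theory together with routine algebraic bookkeeping of the quartic, kept clean by the identities $Y_+Y_-=a^2$ and $Y_++Y_-=-(a^2+\mu)$ and by the factorisation of the discriminant through the threshold $\mu_0$.
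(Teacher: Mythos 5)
Your proof is correct, and it takes a genuinely different route from the paper's. The paper works directly in the arc-length variable: it plugs the ansatz $\sin z\theta$, $\cos z\theta$ into \eqref{onlyequation}, uses \eqref{equazionepuntocriticoinlemma} to reduce everything to the quartic $2z^4-(2\mu+2-\lambda)z^2+(2-\lambda)=0$, then in the double-root case $\mu=\mu_0$ verifies by a separate computation that $\theta\sin\alpha\theta$ and $\theta\cos\alpha\theta$ also solve the equation, and finally proves by explicit Wronskian-type determinants at $s=0$ that the four functions are linearly independent, hence span the four-dimensional solution space. You instead pass to the angular coordinate $\theta$ and show — using $H=\frac{1}{a\kappa\cos\phi}$, $\partial_\theta H=aH\tan\phi$ with $a=\sigma/\pi$, which encodes exactly the Euler–Lagrange relation \eqref{equazionepuntocriticoinlemma} for $\gamma(x_0,L)$ — that \eqref{onlyequation} is $2H$ times the constant-coefficient equation $\partial_\theta^4u+(a^2+\mu)\partial_\theta^2u+a^2u=0$; I checked the cancellation of the $\tan\phi$ terms in $(r\ddot u)'$ and your root bookkeeping ($Y_+Y_-=a^2$, $Y_++Y_-=-(a^2+\mu)$, threshold at $\mu=\mu_0$), and they reproduce \eqref{alfabetacomplesso}, \eqref{soluzionicasocritico} with $\alpha=\sqrt a$, and \eqref{defigamma} exactly, with the characteristic roots matching the paper's quartic under $X=\mathrm i z$ since $2-\lambda=2a^2$. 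The payoff of your reduction is that the resonant solutions in the critical case and the fact that the four functions form a fundamental system come for free from constant-coefficient ODE theory (the diffeomorphism $s\mapsto\theta$, $\dot\theta=H>0$, transports bases of solutions), so you avoid both the ad hoc verification for $u_3,u_4$ at $\mu=\mu_0$ and the lengthy determinant computations; the price is the change-of-variables computation, which you carry out correctly.
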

\begin{proof}
Fix $z\in\mathbb C$, and set $u:[0,2L]\to\mathbb C$ by $u(s)=\sin z\theta(s)$. Then
\begin{equation*}
\begin{split}
	\left(r  u''\right)''+ \mu\left(q  u'\right)' + pu&= \left(2 H^{-3} (\sin z\theta)''\right)''+ \mu\left(2H^{-1} (\sin z\theta)'\right)' + (2-\lambda)H\sin z\theta\\
	&=\left(2z H^{-3} (H\cos z \theta)'\right)''+ \mu\left(2 z \cos z \theta\right)' + (2-\lambda)H\sin z \theta\\
	&=\left(- z \left( H^{-2}\right)' \cos z \theta-2  z ^2H^{-1}\sin z \theta\right)''+\left(-2\mu z^2+2-\lambda\right)H\sin z\theta\\
	&=\left(-z\left( H^{-2}\right)'' \cos z\theta-2z^3\cos\theta\right)'+\left(-2\mu z^2+2-\lambda\right)H\sin z\theta\\
	&\overset{\eqref{equazionepuntocriticoinlemma}}{=} \left(\left(z(2-\lambda)-2z^3\right)\cos z\theta\right)'+\left(-2\mu z^2+2-\lambda\right)H\sin z\theta\\
	&=\left(2z^4-(2\mu+2-\lambda)z^2+(2-\lambda)\right)H\sin z\theta,\\
\end{split}
\end{equation*}
where we used the fact that $\dot\theta=H$ and $H (H^{-2})'=2(H^{-1})'$. In the same way, $v:[0,2L]\to\mathbb C$ defined by $v(s)=\cos z\theta (s)$ satisfies
\begin{equation*}
\left(r  v''\right)''+ \mu\left(q  v'\right)' + pv=\left(2z^4-(2\mu+2-\lambda)z^2+(2-\lambda)\right)H\cos z\theta.
\end{equation*}
In particular, $u$ and $v$ solve \eqref{onlyequation} provided that 
\begin{equation}\label{complexpolinomio}
2z^4-(2\mu+2-\lambda)z^2+(2-\lambda)=0.
\end{equation}
In this case, also the real and imaginary part $u$ and $v$ solve \eqref{onlyequation}. When $\mu<\mu_0$, the four solutions to \eqref{complexpolinomio} are $\pm\alpha\pm\mathrm i \beta$ and $\pm\alpha\mp \mathrm i \beta.$ Hence, the functions $u_1,\ldots,u_4$ given in \eqref{soluzionicasocomplesso} are solutions to \eqref{onlyequation}. When instead $\mu>\mu_0$, the four solutions to \eqref{complexpolinomio} are $\pm\alpha\pm\gamma$ and $\pm\alpha\mp\gamma$, and the functions given in \eqref{soluzionicasoreale} are solutions to \eqref{onlyequation}.
Finally, when $\mu=\mu_0$, \eqref{complexpolinomio} admits two real solutions, $\pm\alpha$. In particular, the functions $u_1$ and $u_2$ given in \eqref{soluzionicasocritico} are solutions. We claim that $u_3$ and $u_4$ are solutions too. To this aim, since $\mu=\mu_0$,
\begin{equation}\label{auxcomputcritic}
4\alpha^2=4\sqrt{\frac{x_0}{L+x_0}}\qquad\text{and}\qquad 2-\lambda+2\mu=4\sqrt{\frac{x_0}{L+x_0}}.
\end{equation}
Therefore,
\begin{equation*}      \begin{split}          \left(r \dot u''\right)''+ \mu&\left(q \dot u'\right)' + pu=\left(2H^{-3}(\theta\sin\alpha\theta)''\right)''+2\mu\left(H^{-1}(\theta\sin\alpha\theta)'\right)'+(2-\lambda)H\theta\sin\alpha\theta\\
	&=\left(2H^{-3}(H\sin\alpha\theta+\alpha H\theta\cos\alpha\theta)'\right)''+2\mu\left(\sin\alpha\theta+\alpha\theta\cos\alpha\theta\right)'+(2-\lambda)H\theta\sin\alpha\theta\\
	&=\left(-\left(H^{-2}\right)'\sin\alpha\theta+4\alpha H^{-1}\cos\alpha\theta-\alpha \left(H^{-2}\right)'\theta\cos\alpha\theta-2\alpha^2 H^{-1}\theta\sin\alpha\theta\right)''\\
	&\quad+4\mu\alpha H\cos\alpha\theta+\left(-2\mu\alpha^2 +(2-\lambda)\right)H\theta\sin\alpha\theta\\
	&=\left(\left(2-\lambda-6\alpha^2\right)\sin\alpha\theta+\alpha\left(2-\lambda-2\alpha^2\right)\theta\cos\alpha\theta\right)'\\
	&\quad+4\mu\alpha H\cos\alpha\theta+\left(-2\mu\alpha^2 +(2-\lambda)\right)H\theta\sin\alpha\theta\\
	&=2\alpha\left(2-\lambda-4\alpha^2+2\mu\right)H\cos\alpha\theta+\left(2\alpha^4-\left(2\mu+2-\lambda\right)\alpha^2+(2-\lambda)\right)H\theta\sin\alpha\theta\\
	&\overset{\eqref{complexpolinomio}}{=}2\alpha\left(-4\alpha^2+2-\lambda+2\mu\right)H\cos\alpha\theta\\
	&\overset{\eqref{auxcomputcritic}}{=}0,
\end{split}
\end{equation*}
whence $u_3$ is a solution. Similarly,  $u_4$ is a solution too.     
We are left to prove that the above solutions are linearly independent. To this aim, let $A_{ij}=u^{(i-1)}_j(0)$ for $i,j=1,\ldots,4$. Then $u_1,\ldots,u_4$ are linearly independent if and only if $\det A\neq 0$. 
Assume first that $\mu<\mu_0$ and $u_1,\ldots,u_4$ are the functions given by \eqref{soluzionicasocomplesso}.
We recall that
\begin{equation*}
u_1=\cos \alpha\theta\sinh\beta\theta,\qquad
u_2=\sin \alpha\theta\cosh\beta\theta,\qquad
u_3= \sin \alpha\theta\sinh\beta\theta,\qquad
u_4=\cos \alpha\theta\cosh\beta\theta.
\end{equation*}
Then,
\begin{align*}
\dot u_1&=H\left(-\alpha\sin\alpha\theta\sinh\beta\theta+\beta\cos\alpha\theta\cosh\beta\theta\right),\qquad\dot u_2=H\left(\alpha\cos\alpha\theta\cosh\beta\theta+\beta\sin\alpha\theta\sinh\beta\theta\right),\\
\dot u_3&=H\left(\alpha\cos\alpha\theta\sinh\beta\theta+\beta\sin\alpha\theta\cosh\beta\theta\right),\qquad\dot u_4=H\left(-\alpha\sin\alpha\theta\cosh\beta\theta+\beta\cos\alpha\theta\sinh\beta\theta\right).
\end{align*}
Moreover,
\begin{align*}
\ddot u_1&=\dot H\left(-\alpha\sin\alpha\theta\sinh\beta\theta+\beta\cos\alpha\theta\cosh\beta\theta\right)+H^2\left(-2\alpha\beta\sin\alpha\theta\cosh\beta\theta+\left(\beta^2-\alpha^2\right)\cos\alpha\theta\sinh\beta\theta\right),\\
\ddot u_2&=\dot H\left(\alpha\cos\alpha\theta\cosh\beta\theta+\beta\sin\alpha\theta\sinh\beta\theta\right)+H^2\left(2\alpha\beta\cos\alpha\theta\sinh\beta\theta+\left(\beta^2-\alpha^2\right)\sin\alpha\theta\cosh\beta\theta\right),\\
\ddot u_3&=\dot H\left(\alpha\cos\alpha\theta\sinh\beta\theta+\beta\sin\alpha\theta\cosh\beta\theta\right)+H^2\left(2\alpha\beta\cos\alpha\theta\cosh\beta\theta+\left(\beta^2-\alpha^2\right)\sin\alpha\theta\sinh\beta\theta\right),\\
\ddot u_4&=\dot H\left(-\alpha\sin\alpha\theta\cosh\beta\theta+\beta\cos\alpha\theta\sinh\beta\theta\right)+H^2\left(-2\alpha\beta\sin\alpha\theta\sinh\beta\theta+\left(\beta^2-\alpha^2\right)\cos\alpha\theta\cosh\beta\theta\right).
\end{align*}
Finally, recalling that $\theta(0)=0,$
\begin{align*}
\dddot u_1(0)&=\left(\ddot H(0)+H(0)^3\left(\beta^2-3\alpha^2\right)\right)\beta,\\ \dddot u_2(0)&=\left(\ddot H(0)+H(0)^3\left(3\beta^2-\alpha^2\right)\right)\alpha,\\  \dddot u_3(0)&=6H(0)\dot H(0)\alpha\beta.
\end{align*}
Therefore, 
\begin{equation*}
A=\begin{bmatrix}
	0 & 0 & 0 & 1 \\
	H(0)\beta & H(0)\alpha & 0 & \dot u_4(0) \\
	\dot H(0)\beta & \dot H(0)\alpha & 2H(0)^2\alpha\beta & \ddot u_4(0) \\
	\left(\ddot H(0)+H(0)^3\left(\beta^2-3\alpha^2\right)\right)\beta & \left(\ddot H(0)+H(0)^3\left(3\beta^2-\alpha^2\right)\right)\alpha & 6H(0)\dot H(0)\alpha\beta & \dddot u_4(0)
\end{bmatrix}.
\end{equation*}
In particular,
\begin{equation*}
\begin{split}
	\det A&=H(0)\beta\left(6H(0)\dot H(0)^2\alpha^2\beta-2H(0)^2\left(\ddot H(0)+H(0)^3\left(3\beta^2-\alpha^2\right)\right)\alpha^2\beta\right)\\
	&\quad-H(0)\alpha\left(6H(0)\dot H(0)^2\alpha\beta^2-2H(0)^2\left(\ddot H(0)+H(0)^3\left(\beta^2-3\alpha^2\right)\right)\alpha\beta^2\right)\\
	&=-4H(0)^6\alpha^2\beta^2\left(\alpha^2+\beta^2\right).
\end{split}
\end{equation*}
Since $H(0),\alpha,\beta\neq 0$, we conclude that $\det A\neq 0$.
Assume instead that $\mu>\mu_0$ and $u_1,\ldots,u_4$ are the functions given by \eqref{soluzionicasoreale}. 
Set $z_1=\alpha-\gamma$ and $z_2=\alpha+\gamma$. 
Then $z_1,z_2\in\rr$, $z_1\neq z_2$ and
\begin{equation*}
u_1=\sin z_1 \theta,\qquad
u_2=\sin z_2\theta,\qquad
u_3= \cos z_1 \theta,\qquad
u_4=\cos z_2\theta.
\end{equation*}
Therefore
\begin{equation*}
\dot u_1=Hz_1\cos z_1 \theta,\qquad
\dot u_2=Hz_2\cos z_2\theta,\qquad
\dot u_3= -Hz_1\sin z_1 \theta,\qquad
\dot u_4=-Hz_2\sin z_2\theta.
\end{equation*}
Moreover,
\begin{align*}
\ddot u_1&=\dot Hz_1\cos z_1\theta-H^2z_1^2\sin z_1 \theta,\qquad \ddot u_2=\dot Hz_2\cos z_2\theta-H^2z_2^2\sin z_2 \theta,\\
\ddot u_3&=-\dot Hz_1\sin z_1\theta-H^2z_1^2\cos z_1 \theta,\qquad \ddot u_4=-\dot Hz_2\sin z_2\theta-H^2z_2^2\cos z_2 \theta.
\end{align*}
Finally,
\begin{align*}
\dddot u_1(0)&=\left(\ddot H(0)-H(0)^3z_1^2\right)z_1,\qquad  \dddot u_2(0)=\left(\ddot H(0)-H(0)^3z_2^2\right)z_2,\\
\dddot u_3(0)&=-3H(0)\dot H(0)z_1^2,\qquad  \dddot u_4(0)=-3H(0)\dot H(0)z_2^2.
\end{align*}
Therefore, 
\begin{equation*}
A=\begin{bmatrix}
	0 & 0 & 1 & 1 \\
	H(0)z_1 & H(0)z_2 & 0 & 0 \\
	\dot H(0)z_1 & \dot H(0)z_2 & -H(0)^2z_1^2 & -H(0)^2z_2^2 \\
	\left(\ddot H(0)-H(0)^3z_1^2\right)z_1 & \left(\ddot H(0)-H(0)^3z_2^2\right)z_2 & -3H(0)\dot H(0)z_1^2 & -3H(0)\dot H(0)z_2^2
\end{bmatrix}.
\end{equation*}
In particular,
\begin{equation*}
\begin{split}
	\det A &=H(0)z_1\left(-3H(0)\dot H(0)^2z_2^3+ H(0)^2\left(\ddot H(0)-H(0)^3z_2^2\right)z_2^3\right)\\
	&\quad- H(0)z_2\left(-3H(0)\dot H(0)^2z_1z_2^2+ H(0)^2\left(\ddot H(0)-H(0)^3z_1^2\right)z_1z_2^2\right)\\
	&\quad-H(0)z_1\left(-3H(0)\dot H(0)^2z_1^2z_2+ H(0)^2\left(\ddot H(0)-H(0)^3z_2^2\right)z_1^2z_2\right)\\
	&\quad+ H(0)z_2\left(-3H(0)\dot H(0)^2z_1^3+ H(0)^2\left(\ddot H(0)-H(0)^3z_1^2\right)z_1^3\right)\\
	&=-H(0)^6z_1z_2\left(z_1^2-z_2^2\right)^2.
\end{split}
\end{equation*}
Since $H(0),z_1,z_2\neq 0$ and $z_1\neq z_2$, we conclude that $\det A\neq 0$. Finally, assume that $\mu=\mu_0$ and $u_1,\ldots,u_4$ are the functions given by \eqref{soluzionicasocritico}. Recall that 
\begin{equation*}
u_1=\sin \alpha \theta,\qquad
u_2=\cos\alpha\theta,\qquad
u_3= \theta\sin\alpha \theta,\qquad
u_4=\theta\cos\alpha\theta.
\end{equation*}
Then,
\begin{equation*}
\dot u_1=\alpha H\cos\alpha\theta,\qquad\dot u_3=H\sin\alpha\theta+\alpha H\theta\cos\alpha\theta,\qquad \dot u_4 =H\cos\alpha\theta-\alpha H\theta\sin\alpha\theta.
\end{equation*}
Moreover,
\begin{align*}
\ddot u_1&=\alpha\dot H\cos\alpha\theta-\alpha^2H^2\sin\alpha\theta,\\
\ddot u_3&=\dot H\sin\alpha\theta+2\alpha H^2\cos\alpha\theta+\alpha\dot H\theta\cos\alpha\theta-\alpha^2 H^2\theta\sin\alpha\theta,\\
\ddot u_4&=\dot H\cos\alpha\theta-2\alpha H^2\sin\alpha\theta-\alpha\dot H\theta\sin\alpha\theta-\alpha^2H^2\theta\cos\alpha\theta.
\end{align*}
Finally,
\begin{align*}
\dddot u_1(0)&=\ddot H(0)\alpha-H(0)^3\alpha^3,\\
\dddot u_3(0)&=6H(0)\dot H(0)\alpha,\\
\dddot u_4(0)&=\ddot H(0)-3H(0)^3\alpha^2.
\end{align*}
Therefore, 
\begin{equation*}
A=\begin{bmatrix}
	0 & 1 & 0 & 0 \\
	H(0)\alpha & \dot u_2(0) & 0 & H(0) \\
	\dot H(0)\alpha & \ddot u_2(0) & 2H(0)^2\alpha & \dot H(0) \\
	\ddot H(0)\alpha-H(0)^3\alpha^3 & \dddot u_2(0) & 6H(0)\dot H(0)\alpha & \ddot H(0)-3H(0)^3\alpha^2
\end{bmatrix}.
\end{equation*}
In particular,
\begin{equation*}
\begin{split}
	\det A&=-H(0)\alpha\left(2H(0)^2\ddot H(0)\alpha-6H(0)^5\alpha^3-6H(0)\dot H(0)^2\alpha\right)\\
	&\quad-H(0)\left(6H(0)\dot H(0)^2\alpha^2-2H(0)^2\ddot H(0)\alpha^2+2H(0)^5\alpha^4\right)\\
	&=4 H(0)^6\alpha^4.
\end{split}
\end{equation*}
Since $H(0),\alpha\neq 0$, we conclude that $\det A\neq 0$.
\end{proof}
As we will see, the study of stability requires to solve \eqref{primavoltaequazionenonhom} only when $\nu=0$. Nevertheless, we find a particular solution to \eqref{secondtimenonhom} for an arbitrary $\nu\in\rr$ for the sake of completeness.
\begin{proposition}
The following facts hold.
\begin{enumerate}
\item [(i)]If $\mu\neq 1$, then 
\begin{equation*}
	u_\nu(s)=\frac{\nu}{(2-\lambda)(1-\mu)}H(s)^{-1}
\end{equation*}
solves \eqref{secondtimenonhom}.
\item [(ii)] If $\mu=1$, then
\begin{equation*}
	u_\nu(s)=\frac{\nu}{2\lambda}\theta(s)(s-L)
\end{equation*}
solves \eqref{secondtimenonhom}.

\end{enumerate}
\end{proposition}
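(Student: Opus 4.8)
The plan is to verify both formulas by direct substitution into the linear operator $\mathcal L u := (r\ddot u)'' + \mu(q\dot u)' + pu$; by linearity it suffices to compute $\mathcal L(H^{-1})$ and, when $\mu=1$, $\mathcal L\big(\theta(\cdot)(\cdot-L)\big)$, and then rescale. Two structural facts will drive the computation. First, the Euler--Lagrange identity \eqref{equazionepuntocriticoinlemma} says that $(H^{-2})''=\lambda-2$ is constant, so $H^{-2}$ is a polynomial in $s$ of degree at most $2$; combining this with the vertical symmetry of $\gamma(x_0,L)$ (which forces $(H^{-2})'$ to be odd about $s=L$) gives $(H^{-2})'=(\lambda-2)(s-L)$ and $H^{-2}(s)=\tfrac{\lambda-2}{2}(s-L)^2+b$ for a constant $b$. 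Second, I will use throughout that $\dot\theta=H$, together with the elementary identities $(H^{-2})'=-2H^{-3}H'$ and $(H^{-1})'=\tfrac12H(H^{-2})'=\tfrac{\lambda-2}{2}H(s-L)$.

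For part (i), set $w=H^{-1}$, so that $w^2=\tfrac{\lambda-2}{2}(s-L)^2+b$. Differentiating once gives $ww'=\tfrac{\lambda-2}{2}(s-L)$ and differentiating again gives $(w')^2+ww''=\tfrac{\lambda-2}{2}$. The key point is that $rw''=2H^{-3}w''=2w^2(ww'')=(\lambda-2)w^2-2(ww')^2$, which collapses to the \emph{constant} $(\lambda-2)b$; hence $(rw'')''=0$. Since moreover $qw'=2H^{-1}w'=2ww'=(\lambda-2)(s-L)$, so $(qw')'=\lambda-2$, and $pw=(2-\lambda)H\cdot H^{-1}=2-\lambda$, we obtain $\mathcal L w=\mu(\lambda-2)+(2-\lambda)=(1-\mu)(2-\lambda)$. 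Because $\lambda=\lambda(x_0,L)<2$ (by \eqref{definitionoflambda}) and $\mu\neq1$, dividing by $(2-\lambda)(1-\mu)$ shows that $u_\nu=\frac{\nu}{(2-\lambda)(1-\mu)}H^{-1}$ solves \eqref{secondtimenonhom}.

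For part (ii), take $\mu=1$ and $u=\theta(s)(s-L)$. From $\dot\theta=H$ one gets $u'=H(s-L)+\theta$ and $u''=H'(s-L)+2H$, hence $ru''=-(H^{-2})'(s-L)+4H^{-2}$; differentiating twice and using $(H^{-2})''=\lambda-2$ (so $(H^{-2})'''=0$) yields $(ru'')''=-(\lambda-2)+3(\lambda-2)=2(\lambda-2)$. On the other hand $qu'=2(s-L)+2\theta H^{-1}$, so $(qu')'=4+2\theta(H^{-1})'$, and the identity $(H^{-1})'=\tfrac{\lambda-2}{2}H(s-L)$ makes $2\theta(H^{-1})'=-(2-\lambda)H\theta(s-L)$ cancel exactly against $pu=(2-\lambda)H\theta(s-L)$. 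Therefore $\mathcal L u=2(\lambda-2)+4=2\lambda$, and since $\lambda>0$ the rescaling $u_\nu=\frac{\nu}{2\lambda}\theta(s)(s-L)$ solves \eqref{secondtimenonhom}. The computations are entirely routine; the only real subtlety — which I would flag as the crux — is recognizing the two cancellations that turn $\mathcal L w$ and $\mathcal L u$ into pure constants, namely that $rw''$ reduces to $(\lambda-2)b$ and that the $\theta H(s-L)$ terms coming from $q u'$ and from $pu$ annihilate each other.
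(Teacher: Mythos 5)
Your proof is correct and follows essentially the same route as the paper: direct substitution of $H^{-1}$ (resp.\ $\theta(s)(s-L)$) into the operator $(r\ddot u)''+\mu(q\dot u)'+pu$, using the critical-point identities $(H^{-2})''=\lambda-2$ and $(H^{-2})'=(\lambda-2)(s-L)$, yielding the constants $(2-\lambda)(1-\mu)$ and $2\lambda$ respectively. The only cosmetic difference is that you exploit the explicit quadratic form of $H^{-2}$ to see at once that $r\ddot w$ is constant, whereas the paper reaches the same cancellation by rewriting the terms step by step; both computations are equivalent.
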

\begin{proof}
Assume first $\mu\neq 1$. For any $A\in\rr$, set $u_A=A H^{-1}$. Then
\begin{equation*}
\begin{split}
\left(r \ddot u_A\right)''+\mu \left(q \dot u_A\right)' + pu_A&=A\left( H^{-3}\left(2H^{-1}\right)''\right)''+A\mu\left(H^{-1}\left(2H^{-1}\right)'\right)'+A(2-\lambda)\\
&=A\left( H^{-3}\left(H\left(H^{-2}\right)'\right)'\right)''+A\mu\left(H^{-2}\right)''+A(2-\lambda)\\
&=A\left( H^{-2}\left(H^{-2}\right)''+H^{-3}H'\left(H^{-2}\right)'\right)''+A\mu(\lambda-2)+A(2-\lambda)\\
&=A\left( (\lambda-2)H^{-2}-\frac{1}{2}\left(\left(H^{-2}\right)'\right)^2\right)''+A(2-\lambda)(1-\mu)\\
&=A\left( (\lambda-2)\left(H^{-2}\right)'-\left(H^{-2}\right)'\left(H^{-2}\right)''\right)'+A(2-\lambda)(1-\mu)\\
&=A(2-\lambda)(1-\mu),
\end{split}
\end{equation*}
whence $u_A$ satisfies \eqref{secondtimenonhom} provided that $A=\frac{\nu}{(2-\lambda)(1-\mu)}$.  Assume instead $\mu= 1$. For any $\alpha\in\rr$, set $u_A=A (s-L)\theta$. 
Recall that 
\begin{equation}\label{inproofhmenodueprimo}
\left(H^{-2}\right)'=-\frac{2\sigma^2}{\pi^2}(s-L)=-\frac{2 x_0}{ L+x_0 }(s-L)=-(2-\lambda)(s-L),
\end{equation}
where $\sigma$ is as in \eqref{keg}.  Then
\begin{equation*}
\begin{split}
\left(r \ddot u_A\right)''+& \left(q \dot u_A\right)' + pu_A=2A\left( H^{-3}\left((s-L)\theta\right)''\right)''+2A\left(H^{-1}\left((s-L)\theta\right)'\right)'+A(2-\lambda)(s-L)\theta H\\
&=2A\left( H^{-3}\left(\theta+(s-L)H\right)'\right)''+2A\left(H^{-1}\theta+s-L\right)'+A(2-\lambda)(s-L)\theta H\\
&=2A\left( 2H^{-2}+(s-L)H^{-3}\dot H\right)''+4A-2A\theta H^{-2}\dot H+A(2-\lambda)(s-L)\theta H\\
&=A\left( 4H^{-2}-(s-L)\left(H^{-2}\right)'\right)''+4A+A\theta H\left(H^{-2}\right)'+A(2-\lambda)(s-L)\theta H\\
&\overset{\eqref{inproofhmenodueprimo}}{=}A\left( 4H^{-2}-(s-L)\left(H^{-2}\right)'\right)''+4A\\
&=A\left( 3\left(H^{-2}\right)'+(2-\lambda)(s-L)\right)'+4A\\
&\overset{\eqref{inproofhmenodueprimo}}{=}2A\lambda,
\end{split}
\end{equation*}
whence $u_A$ satisfies \eqref{secondtimenonhom} provided that $A=\frac{\nu}{2\lambda}$.
\end{proof}
\subsection{Stability} \label{subsec:stab}
Let $\gamma=\gamma(x_0,L)$ be the unique critical point of \Cref{uniquecriticalthm}. We are in position to prove that $\gamma$ is a stable critical point of $\F-\lambda\A$.
We recall once more that, if  $\mathcal V$ is a two-sided admissible variation of $\gamma$ and if $\varphi$ is as in \eqref{quattrfunzioni}, then $\dot\varphi(0)=\dot\varphi(2L)=0$. Therefore, following \Cref{examplestability}, we solve \eqref{minprobstab} when \begin{equation}\label{defiw1}
\mathcal W_1=\left\{\varphi\in W^{2,2}(0,2L)\,:\,\varphi(0)=\varphi(2L)=0,\,\dot\varphi(0)=\dot\varphi(2L)=0\right\}.
\end{equation} 
Notice that $\mathcal W_1$ is a closed subspace of $\mathcal S$ satisfying \eqref{contieneleamedianulla}. The minimum $\mu_{\mathcal W_1}$, as well as a corresponding minimizer, can be found as follows.
\begin{proposition}\label{characteigentwosided}
Let $\mu_{\mathcal W_1}$ be as in \eqref{minprobstab}. Then     $\mu_{\mathcal W_1}$ is the minimal $\mu> 0$ with the property that
\begin{equation}\label{bvptwosided}
\left\{
\begin{aligned}
&\left(r \ddot u\right)''+\mu \left(q \dot u\right)' + pu = 0\qquad\text{on }(0,2L), \\
&u(0) = u(2L) = 0, \\
&\dot u (0) = \dot u(2L) = 0
\end{aligned}
\right.
\end{equation}
admits a non-trivial solution. Moreover, any solution to \eqref{bvptwosided} with $\mu=\mu_{\mathcal W_1}$ is a minimizer of \eqref{minprobstab}.
\end{proposition}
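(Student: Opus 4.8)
The plan is to recognize \eqref{bvptwosided} as the Euler--Lagrange problem attached to the constrained minimization \eqref{minprobstab} over $\mathcal W_1$, so that $\mu_{\mathcal W_1}$ becomes the lowest eigenvalue of the clamped fourth-order operator $u\mapsto (r\ddot u)''+\mu(q\dot u)'+pu$. I would split the argument into three steps.

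\textbf{Step 1: $\mu_{\mathcal W_1}$ is an eigenvalue.} Since $\mathcal W_1$ is a closed subspace of $\mathcal S$ satisfying \eqref{contieneleamedianulla}, \Cref{exminstab} produces a minimizer $u\in\mathcal W_1\setminus\{0\}$ of \eqref{minprobstab}, with $\mu_{\mathcal W_1}>0$, and, as noted above, $u$ solves the weak equation \eqref{weakelstab}. By the regularity statement proved above, $u\in C^\infty[0,2L]$, hence \eqref{weakstabwithbc} holds. Every $v\in\mathcal W_1$ has $\dot v(0)=\dot v(2L)=0$, so the boundary term in \eqref{weakstabwithbc} vanishes and $\int_0^{2L}\bigl((r\ddot u)''+\mu_{\mathcal W_1}(q\dot u)'+pu\bigr)v\,ds=0$ for all $v\in\mathcal W_1$. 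Here I would observe that $\mathcal W_1$ contains \emph{every} $\varphi\in C^\infty_c(0,2L)$, not merely those of zero mean: testing against a bump function with nonzero integral forces the constant $\nu$ appearing in \eqref{primavoltaequazionenonhom} to vanish, i.e. $(r\ddot u)''+\mu_{\mathcal W_1}(q\dot u)'+pu=0$ on $(0,2L)$. Together with the four clamped conditions encoded by $u\in\mathcal W_1$ (see \eqref{defiw1}), this exhibits $u$ as a nontrivial solution of \eqref{bvptwosided} with $\mu=\mu_{\mathcal W_1}$.

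\textbf{Step 2: minimality.} Conversely, let $\mu>0$ be such that \eqref{bvptwosided} has a nontrivial solution $u$. By \Cref{soluzgenrralestab} (or simply by ODE regularity, the leading coefficient $r=2/H^3$ being smooth and strictly positive) $u\in C^\infty[0,2L]$, and the boundary conditions give $u\in\mathcal W_1$. Multiplying the first equation of \eqref{bvptwosided} by $u$, integrating over $(0,2L)$ and integrating by parts twice, all boundary contributions drop because $u(0)=u(2L)=\dot u(0)=\dot u(2L)=0$, leaving $b(u,u)=\mu\, a(u,u)$. Since $u\not\equiv 0$ and $u(0)=0$, its derivative cannot vanish identically, so $a(u,u)=\int_0^{2L}\frac{2\dot u^2}{H}\,ds>0$; therefore $\mu=b(u,u)/a(u,u)\ge\mu_{\mathcal W_1}$. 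Combined with Step 1, this shows $\mu_{\mathcal W_1}$ is the minimal $\mu>0$ for which \eqref{bvptwosided} admits a nontrivial solution.

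\textbf{Step 3 and the obstacle.} Specializing the identity $b(u,u)=\mu\,a(u,u)$ of Step 2 to $\mu=\mu_{\mathcal W_1}$ and using $a(u,u)>0$ yields $b(u,u)/a(u,u)=\mu_{\mathcal W_1}$, so every nontrivial solution of \eqref{bvptwosided} at $\mu=\mu_{\mathcal W_1}$ realizes the infimum in \eqref{minprobstab} and is thus a minimizer. The computations are routine; the only delicate points are making sure that \emph{all} boundary terms vanish (this is exactly where the Neumann-type conditions $\dot u(0)=\dot u(2L)=0$ defining $\mathcal W_1$ enter) and the upgrade of the constant $\nu$ to $0$, both of which rely on $\mathcal W_1$ being the clamped space and containing all of $C^\infty_c(0,2L)$. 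The substantive work — actually identifying $\mu_{\mathcal W_1}$ and proving $\mu_{\mathcal W_1}>1$ — is postponed to the subsequent analysis built on \Cref{soluzgenrralestab}.
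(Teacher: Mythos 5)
Your proposal is correct and follows essentially the same route as the paper: extract a minimizer via \Cref{exminstab}, use the regularity and the fact that $C^\infty_c(0,2L)\subseteq\mathcal W_1$ to kill the multiplier $\nu$ and the boundary term so that the minimizer solves \eqref{bvptwosided}, and then test any solution of \eqref{bvptwosided} against itself to get $b(u,u)=\mu\,a(u,u)$, which gives both minimality of $\mu_{\mathcal W_1}$ and the fact that solutions at $\mu=\mu_{\mathcal W_1}$ minimize. Your added remarks (that $a(u,u)>0$ since a clamped nontrivial $u$ cannot have $\dot u\equiv 0$, and the explicit vanishing of the boundary term in \eqref{weakstabwithbc}) only make explicit what the paper leaves tacit.
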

\begin{proof}
By \Cref{exminstab}, \eqref{minprobstab} has a minimizer, say $u\in\mathcal W_1$. In particular, $u(0)=u(2L)=0$ and $\dot u(0)=\dot u(2L)=0$. Moreover, since $C_c^\infty(0,2L)\subseteq \mathcal W_1$, $u$ solves \eqref{primavoltaequazionenonhom} with $\nu=0$. Therefore, $u$ solves \eqref{bvptwosided} with $\mu=\mu_{\mathcal W_1}$. Assume by contradiction that there exists $0\leq \tilde \mu<\mathcal \mu_{\mathcal W_1}$ and $\tilde u\in\mathcal W_1$ such that $\tilde u$ solves \eqref{bvptwosided} with $\mu=\tilde \mu$.
Multiplying the first equation by $\tilde u $, integrating over $(0,2L)$ and integrating by parts, we get that 
\begin{equation*}
b(\tilde u,\tilde u)-\tilde \mu a(\tilde u,\tilde u)=0,
\end{equation*}
but then $\tilde \mu\leq \mu_{\mathcal W_1}$, a contradiction. Similarly, any solution to \eqref{bvptwosided} with $\mu=\mu_{\mathcal W_1}$ minimizes \eqref{minprobstab}.
\end{proof}
\begin{theorem}\label{teoremastabilitaperg}
Fix $0<3x_0<L$.
Let $\gamma=\gamma(x_0,L)$ be the unique critical point of \Cref{uniquecriticalthm}. 
Then $\mu_{\mathcal W_1}=\mu_{\mathcal W_1}\left(\frac{x_0}{L+x_0}\right)>1$. In particular, there exists $\mathcal C_{\mathcal W_1}=\mathcal C_{\mathcal W_1}\left(\frac{x_0}{L+x_0}\right)>0$ such that 
\begin{equation} \label{eq:est_secvarG}
\int_0^{2L}\left((2-\lambda)H\varphi^2-\frac{2\dot\varphi^2}{H}+\frac{2\ddot\varphi^2}{H^3}\right)\,ds\geq\mathcal C_{\mathcal W_1}\|\varphi\|^2_{W^{2,2}(0,2L)}
\end{equation}
for any $\varphi\in\mathcal W_1$, so that $\gamma$ is a stable critical point of $\F-\lambda\A$.
\end{theorem}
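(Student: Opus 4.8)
The plan is to turn \eqref{minprobstab} for $\mathcal W_1$ into a constant–coefficient Rayleigh quotient and then exploit a tailored splitting of its numerator. Write $t=\tfrac{x_0}{L+x_0}\in(0,\tfrac14)$ (this uses $3x_0<L$) and $\sigma=\pi\sqrt t$, so that $2-\lambda=2t$ by \eqref{definitionoflambda}. First I would change variable to the turning angle $\theta(s)=\pi+\tfrac{\pi}{\sigma}\arcsin\tfrac{s-L}{\k}$, which by \Cref{scoprirepunticritici} is a smooth increasing bijection $[0,2L]\to[0,2\pi]$ with $\dot\theta=H$; in this variable $H(\theta)=\tfrac{\sin\sigma}{L\sqrt t}\sec\!\big(\sqrt t(\theta-\pi)\big)$, whence $g:=\log H$ satisfies $g'=\sqrt t\tan\!\big(\sqrt t(\theta-\pi)\big)$ and the crucial identity $g'^2-g''\equiv-t$. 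Setting $u(s)=U(\theta(s))$ and changing variables, $a(u,u)=2\int_0^{2\pi}(U')^2\,d\theta$ and $b(u,u)=2\int_0^{2\pi}\!\big((U''+g'U')^2+tU^2\big)\,d\theta$; integrating $2\int g'U'U''=-\int g''(U')^2$ by parts (the boundary term vanishes since $U'(0)=U'(2\pi)=0$ for $U$ coming from $\mathcal W_1$) and using $g'^2-g''=-t$ gives $b(u,u)=2\int_0^{2\pi}\!\big((U'')^2-t(U')^2+tU^2\big)\,d\theta$. Hence $\mu_{\mathcal W_1}=-t+\Lambda(t)$ with $\Lambda(t)=\inf\big\{\tfrac{\int_0^{2\pi}(U'')^2+t\int_0^{2\pi}U^2}{\int_0^{2\pi}(U')^2}:U\in\mathcal W_1^\theta\setminus\{0\}\big\}$ and $\mathcal W_1^\theta=\{U\in W^{2,2}(0,2\pi):U(0)=U(2\pi)=U'(0)=U'(2\pi)=0\}$, so it suffices to prove $\Lambda(t)>1+t$.

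Next I would record the elementary bound $\int_0^{2\pi}(U'')^2\ge\int_0^{2\pi}(U')^2$ for $U\in\mathcal W_1^\theta$: the function $W:=U'$ is continuous with $W(0)=W(2\pi)=0$, hence extends to a $2\pi$-periodic $W^{1,2}$ function with $\int_0^{2\pi}W=U(2\pi)-U(0)=0$, and Wirtinger's inequality yields $\int(W')^2\ge\int W^2$. Now pick a minimizer $U_*\in\mathcal W_1^\theta\setminus\{0\}$ of $\Lambda(t)$ — which exists by \Cref{exminstab} (applicable since $\mathcal W_1\subseteq\mathcal S$ is closed) transported through the above bijection — and split $\int(U_*'')^2=(1-t)\int(U_*'')^2+t\int(U_*'')^2$, legitimate as $0<t<1$. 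The first summand is $\ge(1-t)\int(U_*')^2$ by the Poincaré bound; for the second, $t\big(\int(U_*'')^2+\int U_*^2\big)\ge 2t\sqrt{\int(U_*'')^2\int U_*^2}\ge 2t\,\big|\int U_*''U_*\big|=2t\int(U_*')^2$ by AM--GM, Cauchy--Schwarz, and one integration by parts (using $U_*(0)=U_*(2\pi)=0$). Summing, $\int(U_*'')^2+t\int U_*^2\ge(1+t)\int(U_*')^2$, i.e. $\Lambda(t)\ge1+t$.

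The point requiring care is the strictness. If $\Lambda(t)=1+t$, then equality holds in the Cauchy--Schwarz step, forcing $U_*''=cU_*$ for a constant $c$; testing against $U_*$ gives $c=-\int(U_*')^2/\!\int U_*^2<0$, hence $U_*=A\cos\omega\theta+B\sin\omega\theta$ with $\omega=\sqrt{-c}$, and the conditions $U_*(0)=U_*'(0)=0$ force $A=B=0$, contradicting $U_*\ne0$. Thus $\Lambda(t)>1+t$, i.e. $\mu_{\mathcal W_1}>1$, depending only on $t$ since $\Lambda$ does. Finally \eqref{eq:est_secvarG} follows: for $\varphi\in\mathcal W_1$ one has $b(\varphi,\varphi)-a(\varphi,\varphi)\ge(1-\mu_{\mathcal W_1}^{-1})b(\varphi,\varphi)$ and $b(\varphi,\varphi)\ge\min\!\big\{\min_{[0,2L]}\tfrac{2}{H^3},\min_{[0,2L]}(2-\lambda)H\big\}\big(\|\varphi\|^2_{L^2}+\|\ddot\varphi\|^2_{L^2}\big)\ge\mathcal C\,\|\varphi\|^2_{W^{2,2}(0,2L)}$ by \eqref{ineqforcoerc}, so one takes $\mathcal C_{\mathcal W_1}=(1-\mu_{\mathcal W_1}^{-1})\mathcal C>0$; together with \Cref{corollstbofg} this yields the estimate and the stability of $\gamma$ for $\F-\lambda\A$. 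The main obstacle is the first step: producing the constant–coefficient form through the identity $g'^2-g''=-t$, which is precisely what makes the split with weights $1-t,\,t,\,t$ reproduce $1+t$. A more computational alternative would impose the four boundary conditions on the explicit solutions in \Cref{soluzgenrralestab} and show the resulting $4\times4$ determinant never vanishes for $\mu\le1$, separating the regimes $\mu\lessgtr\mu_0$; the argument above bypasses that case analysis.
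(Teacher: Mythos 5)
Your proposal is correct, and it reaches $\mu_{\mathcal W_1}>1$ by a genuinely different route than the paper. The paper works in the arclength variable: it invokes \Cref{characteigentwosided}, takes the explicit solution basis of \Cref{soluzgenrralestab}, and rules out $\mu\le 1$ by a three-case analysis ($\mu<\mu_0$, $\mu=\mu_0$, $\mu>\mu_0$), imposing the four clamped boundary conditions, computing the resulting $2\times 2$ determinants, and finishing with a monotonicity argument for $x\mapsto(\sin 2\pi x/x)^2$ on $\left(0,\tfrac12\right]$. You instead pass to the turning angle $\theta$ (with $\dot\theta=H$), where the identity $g'^2-g''=-t$ for $g=\log H$ — which is just the critical-point equation \eqref{equazionepuntocriticoinlemma} rewritten, since $\frac{d^2}{ds^2}(H^{-2})=2(g'^2-g'')$ — turns the Rayleigh quotient into the constant-coefficient form $\mu_{\mathcal W_1}=\Lambda(t)-t$; then periodic Wirtinger for $U'$ (legitimate because $U'(0)=U'(2\pi)=0$ and $\int U'=0$), the $(1-t,t)$ splitting with Cauchy--Schwarz, and a clean equality analysis (existence of a minimizer via \Cref{exminstab} transported through the diffeomorphism) give $\Lambda(t)>1+t$. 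All the steps check out: the change of variables preserves $\mathcal W_1$ and both quadratic forms exactly as you claim, the boundary term $[g'U'^2]_0^{2\pi}$ vanishes, and the equality case $U''=cU$ with $c<0$ is killed by the clamped conditions at $\theta=0$. Your route avoids the case distinction and the $4\times4$/determinant computations entirely and even justifies, via scaling, the paper's claim that $\mu_{\mathcal W_1}$ depends only on $x_0/(L+x_0)$; what it gives up is the exact transcendental characterization of $\mu_{\mathcal W_1}$ (the relation $(\sin 2\pi\alpha/\alpha)^2=(\sin 2\pi\gamma/\gamma)^2$) that the paper's computation produces as a by-product. Your test function $U=1-\cos\theta$ shows $\mu_{\mathcal W_1}\le 1+2t$, so the strict-inequality step is genuinely needed and your equality analysis is the right tool. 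The concluding coercivity step (from $\mu_{\mathcal W_1}>1$ to \eqref{eq:est_secvarG} via \eqref{ineqforcoerc} and \Cref{corollstbofg}) is the same as the paper's; as in the paper, the explicit constant so obtained involves $\min H$, $\max H$ and the Poincaré constant of $(0,2L)$, a point neither argument dwells on.
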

\begin{proof}
By \Cref{characteigentwosided}, $\mu_{\mathcal W_1}$ is minimal with the property that \eqref{bvptwosided} has a solution. Denote such solution by $u$.
First we show that $\mu_{\mathcal W_1}>\mu_0$.
Assume not by contradiction. Then $\mu_{\mathcal W_1}$ satisfies either $\mu_{\mathcal W_1}<\mu_0$ or $\mu_{\mathcal W_1}=\mu_0$. Assume first that  $\mu_{\mathcal W_1}<\mu_0$. Therefore, by \Cref{soluzgenrralestab}, there exists $a,b,c,d\in\rr$ such that 
\begin{equation*}
u=a\cos \alpha\theta\sinh\beta\theta+b\sin \alpha\theta\cosh\beta\theta+c\sin \alpha\theta\sinh\beta\theta+d\cos \alpha\theta\cosh\beta\theta,
\end{equation*}
where $\alpha$ and $\beta$ are as in \eqref{alfabetacomplesso}. Since $\theta(0)=0$ and $u(0)=0,$ then $d=0$, so that
\begin{align*}
\dot u&=H\big[-a\alpha\sin\alpha\theta\sinh\beta\theta+a\beta\cos\alpha\theta\cosh\beta\theta+b\alpha\cos\alpha\theta\cosh\beta\theta\\
&\quad+b\beta\sin\alpha\theta\sinh\beta\theta+c\alpha\cos\alpha\theta\sinh\beta\theta+c\beta\sin\alpha\theta\cosh\beta\theta\big].
\end{align*}
Since $\dot u(0)=0$ and $H(0)\neq 0$, then $a\beta+b\alpha=0$. Since $\alpha\neq 0$, then $b=-\frac{a\beta}{\alpha}$, whence
\begin{align*}
u&=a\left(\cos \alpha\theta\sinh\beta\theta-\frac{\beta}{\alpha}\sin \alpha\theta\cosh\beta\theta\right)+c\sin \alpha\theta\sinh\beta\theta,\\
\dot u&=aH\left(-\left(\alpha+\frac{\beta^2}{\alpha}\right)\sin\alpha\theta\sinh\beta\theta\right)+cH\left(\alpha\cos\alpha\theta\sinh\beta\theta+\beta\sin\alpha\theta\cosh\beta\theta\right).
\end{align*}
Moreover, as $u(2L)=\dot u(2L)=0$, $\theta(2L)=2\pi$, $H(0)\neq 0$ and $\alpha\neq 0$, then 
\begin{equation*}
\begin{cases}
a\left(\alpha\cos 2\pi\alpha\sinh2\pi\beta-\beta\sin 2\pi\alpha\cosh2\pi\beta\right)+c\alpha \sin 2\pi\alpha\sinh 2\pi\beta&=0\vspace{0.3cm}\\ 
a\left(-\left(\alpha+\frac{\beta^2}{\alpha}\right)\sin 2\pi\alpha\sinh 2\pi\beta\right)+c\left(\alpha\cos 2\pi\alpha\sinh 2\pi\beta+\beta\sin2\pi\alpha\cosh 2\pi\beta\right)&=0.
\end{cases}
\end{equation*}
The above is a linear, homogeneous system in the variables $a$ and $c$. We denote its coefficient matrix by $M$. Since $u\neq0$, then $(a,b)\neq(0,0)$. Therefore, the determinant of $M$ vanishes. But 
\begin{equation*}
\begin{split}
\det M&=\alpha^2\cos^22\pi\alpha\sinh^22\pi\beta-\beta^2\sin^2 2\pi\alpha\cosh^22\pi\beta+\left(\alpha^2+\beta^2\right)\sin^22\pi\alpha\sinh^22\pi\beta\\
&=\alpha^2\sinh^22\pi\beta-\beta^2\sin^2 2\pi\alpha.
\end{split}
\end{equation*}
Therefore, as $\alpha,\beta\neq 0$, $\det M=0$ if and only if
\begin{equation*}
\left(\frac{\sin 2\pi\alpha}{\alpha}\right)^2=\left(\frac{\sinh 2\pi\beta}{\beta}\right)^2.
\end{equation*}
However, this is impossible, because, as $\alpha,\beta>0$,
\begin{equation*}
\left(\frac{\sin 2\pi\alpha}{\alpha}\right)^2<4\pi^2\qquad\text{and}\qquad \left(\frac{\sinh 2\pi\beta}{\beta}\right)^2>4\pi^2,
\end{equation*}
and so $\mu_{\mathcal W_1}\geq\mu_0$.
Instead, assume that  $\mu_{\mathcal W_1}=\mu_0$.  By \Cref{soluzgenrralestab}, there exist $a,b,c,d\in\rr$ such that 
\begin{equation*}
u=a\sin \alpha\theta+b\cos \alpha\theta+c\theta \sin \alpha\theta+d\theta\cos \alpha\theta.
\end{equation*}
Since $\theta(0)=0$ and $u(0)=0,$ then $b=0$, so that
\begin{equation*}
\dot u=a\alpha H\cos\alpha\theta+cH\sin\alpha\theta+c\alpha H\theta\cos\alpha\theta+d H\cos\alpha\theta-d\alpha H\theta\sin\alpha\theta.
\end{equation*}
Since $\dot u(0)=0$ and $\theta(0)=0$, then $a\alpha H(0)+d H(0)=0$. But $H(0)\neq 0$, whence $d=-a\alpha$, and 
\begin{align*}
u&=a\left(\sin \alpha\theta-\alpha\theta\cos\alpha\theta\right)+c\theta \sin \alpha\theta,\\
\dot u&=a\alpha^2H\theta\sin\alpha\theta+cH\left(\sin\alpha\theta+\alpha\theta\cos\alpha\theta\right).
\end{align*}
As $u(2L)=\dot u(2L)=0$, $\theta(2L)=2\pi$ and $H(0)\neq 0$, then 
\begin{equation*}
\begin{cases}
a\left(\sin 2\pi\alpha-2\pi\alpha\cos2\pi\alpha\right)+c\left(2\pi\sin 2\pi\alpha\right)&=0,\\
a\left(2\pi\alpha^2\sin 2\pi\alpha\right)+c\left(\sin2\pi\alpha+2\pi\alpha\cos 2\pi\alpha\right)&=0.
\end{cases}
\end{equation*}
Again, if M is the coefficient matrix of the above system,  then $\det M=0$. But
\begin{equation*}
\det M=\sin^2 2\pi\alpha-4\pi^2\alpha^2\cos^2 2\pi\alpha-4\pi^2\alpha^2\sin^2 2\pi\alpha=\sin^2 2\pi\alpha-4\pi^2\alpha^2.
\end{equation*}
But since $\alpha\neq 0$, $\det M=0$ if and only if 
\begin{equation*}
\left(\frac{\sin 2\pi\alpha}{2\pi\alpha}\right)^2=1,
\end{equation*}
which is impossible.
Therefore, $\mu_{\mathcal W_1}>\mu_0$. 
By \Cref{soluzgenrralestab}, there are $a,b,c,d\in\rr$ such that
\begin{equation*}
u=a\sin z_1\theta+b\sin z_2\theta +c\cos z_1\theta+d\cos z_2\theta,
\end{equation*}
where $z_1=\alpha-\gamma$, $z_2=\alpha+\gamma$ and $\gamma $ is as in \eqref{defigamma}. Since $u(0)=0$ and $\theta(0)=0$, then $d=-c$, whence
\begin{equation*}
\dot u=H\left(az_1\cos z_1\theta+bz_2\cos z_2\theta -c z_1\sin z_1\theta+cz_2\sin z_2\theta\right).
\end{equation*}
By $\dot u(0)=0$ and $H(0)\neq 0$, $az_1+bz_2=0$. Since $z_2\neq 0$, $b=-\frac{az_1}{z_2}$, whence
\begin{align*}
u&=a\left(\sin z_1\theta-\frac{z_1}{z_2}\sin z_2\theta\right)+c\left(\cos z_1\theta-\cos z_2\theta\right),\\
\dot u&=aH\left(z_1\cos z_1\theta-z_1\cos z_2\theta\right)+cH\left(-z1\sin z_1\theta+z_2\sin z_2\theta\right).
\end{align*}
As $u(2L)=\dot u(2L)=0$, $\theta(2L)=2\pi$, then 
\begin{equation*}
\begin{cases}
a\left(z_2\sin 2\pi z_1-z_1\sin 2\pi z_2\right)+c\left(z_2\cos 2\pi z_1-z_2\cos 2\pi z_2\right)&=0,\\
a\left(z_1\cos 2 \pi z_1-z_1\cos 2 \pi z_2\right)+c\left(-z_1\sin 2 \pi z_1+z_2\sin 2 \pi z_2\right)&=0.
\end{cases}
\end{equation*}
Again, $\det M=0$. But
\begin{equation*}
\begin{split}
\det M&=-z_1z_2\sin^2 2\pi z_1+z_2^2\sin 2\pi z_1\sin 2\pi z_2+z_1^2\sin 2\pi z_1\sin 2\pi z_2-z_1z_2\sin^2 2\pi z_2\\
&-z_1z_2 \cos^2 2\pi z_1+ 2z_1z_2\cos 2\pi z_1\cos 2 \pi z_2-z_1z_2\cos ^22\pi z_2\\
&=2z_1z_2\left (\cos 2\pi z_1\cos 2 \pi z_2-1\right)+ \left(z_1^2+z_2^2\right)\sin 2\pi z_1\sin 2\pi z_2\\
&=\left(\alpha^2-\gamma^2\right)\left(\cos 4\pi\alpha+\cos 4\pi\gamma-2\right)+\left(\alpha^2+\gamma^2\right)\left(\cos 4\pi\gamma-\cos 4\pi \alpha\right)\\
&=-2\gamma^2\cos 4\pi\alpha+2\alpha^2\cos 4\pi\gamma-2\alpha^2+2\gamma^2\\
&=2\gamma^2\left(1-\cos 4\pi\alpha\right)-2\alpha^2\left(1-\cos 4\pi\gamma\right)\\
&=4\gamma^2\sin^22\pi\alpha-4\alpha^2\sin^22\pi\gamma.
\end{split}
\end{equation*}
Therefore $\det M=0$ if and only if 
\begin{equation}\label{contrastart}
\left(\frac{\sin 2\pi\alpha}{\alpha}\right)^2=\left(\frac{\sin 2\pi\gamma}{\gamma}\right)^2.
\end{equation}
Assume by contradiction that $\mu_{\mathcal W_1}\leq 1$. Notice that, since $\mu>\mu_0$,
\begin{equation}\label{auxmuwquasifine}
\mu_{\mathcal W_1}-\frac{x_0}{L+x_0}>2\left(-\frac{x_0}{L+x_0}+\sqrt{\frac{x_0}{L+x_0}}\right)\geq 0,
\end{equation}
where the last inequality follows since $\frac{x_0}{L+x_0}\leq 1$. Therefore
\begin{equation*}
\begin{split}
(\alpha+\gamma)^2&=\alpha^2+\gamma^2+2\alpha\gamma\\
&=\frac{1}{4}\left(2\mu_{\mathcal W_1}+\frac{2x_0}{L+x_0}+2\sqrt{\left(\mu_{\mathcal W_1}+\frac{x_0}{L+x_0}\right)^2-\frac{4x_0}{L+x_0}}\right)\\
&=\frac{1}{4}\left(2\mu_{\mathcal W_1}+\frac{2x_0}{L+x_0}+2\sqrt{\left(\mu_{\mathcal W_1}-\frac{x_0}{L+x_0}\right)^2}\right)\\
&\overset{\eqref{auxmuwquasifine}}{=}\mu_{\mathcal W_1}.
\end{split}
\end{equation*}
In particular, since $\alpha,\gamma>0$, then $  \alpha+\gamma=\sqrt{\mu_{\mathcal W_1}}\leq 1$. Moreover, as $\gamma<\alpha$, then $\gamma\in\left(0,\frac{1}{2}\right).$ Set $f(x)=\left(\frac{\sin2\pi x}{x}\right)^2$. Notice that $f'(x)=\left(\frac{2\sin2\pi x}{x}\right)\left(\frac{2\pi x\cos 2\pi x-\sin 2\pi x}{x^2}\right)$. But $2\pi x\cos 2\pi x-\sin 2\pi x<0$ on $\left(0,\frac{1}{2}\right)$, whence $f$ is strictly decreasing, and hence injective, on $\left(0,\frac{1}{2}\right]$. In particular, $\alpha\in \left(\frac{1}{2},1\right)$, since otherwise the injectivity of $f$ on $\left(0,\frac{1}{2}\right]$ and the fact that $\alpha\neq \gamma$ would contradict \eqref{contrastart}. Since $\alpha\in \left(\frac{1}{2},1\right)$, then $1-\alpha\in \left(0,\frac{1}{2}\right)$, and moreover, since $\alpha+\gamma\leq 1$, then $1-\alpha\geq\gamma$. Finally, $\alpha\in \left(\frac{1}{2},1\right)$ implies $\frac{1}{\alpha^2}<\frac{1}{(1-\alpha)^2}$. In conclusion, 
\begin{equation*}
\left(\frac{\sin 2\pi\alpha}{\alpha}\right)^2 <\left(\frac{\sin 2\pi\alpha}{1-\alpha}\right)^2=\left(\frac{\sin 2\pi(1-\alpha)}{1-\alpha}\right)^2\leq  \left(\frac{\sin 2\pi\gamma}{\gamma}\right)^2,
\end{equation*}
a contradiction with \eqref{contrastart}. Therefore, $\mu_{\mathcal W_1}>1$. The rest of the thesis follows by \Cref{corollstbofg}.
\end{proof}

\subsection{One-sided area-preserving stability}
As already pointed out, we do not enter in the case described in \Cref{onesidedexample}. We limit to detect the relevant boundary value problem associated with it.
\begin{proposition}
Let
\begin{equation*}
\mathcal W_2=\left\{\varphi\in W^{2,2}(0,2L)\,:\,\varphi(0)=\varphi(2L)=0,\,\int_0^{2L}\varphi=0\right\}.
\end{equation*}
Let $\mu_{\mathcal W_2}$ be as in \eqref{minprobstab}. Then     $\mu_{\mathcal W_22}$ is the minimal $\mu> 0$ with the property that
\begin{equation}\label{bvponesidedF}
\left\{
\begin{aligned}
&\left(r \ddot u\right)''+\mu \left(q \dot u\right)' + pu = \nu\qquad\text{on }(0,2L), \\
&u(0) = u(2L) = 0, \\
&\ddot u (0) = \ddot u(2L) = 0\\
&\int_0^{2L}u\,ds=0.
\end{aligned}
\right.
\end{equation}
admits a non-trivial solution for some $\nu\in\rr$. Moreover, any solution to \eqref{bvponesidedF} with $\mu=\mu_{\mathcal W_2}$ is a minimizer of \eqref{minprobstab}.
\end{proposition}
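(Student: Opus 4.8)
The plan is to mirror the proof of \Cref{characteigentwosided} almost verbatim, replacing the boundary conditions dictated by $\mathcal W_1$ with those dictated by $\mathcal W_2$. First I would invoke \Cref{exminstab} applied to $\mathcal W=\mathcal W_2$ (which is a closed subspace of $\mathcal S$ satisfying \eqref{contieneleamedianulla}, since every $\varphi\in C_c^\infty(0,2L)$ with $\int_0^{2L}\varphi\,ds=0$ belongs to $\mathcal W_2$) to obtain a minimizer $u\in\mathcal W_2\setminus\{0\}$ of \eqref{minprobstab}. By the regularity result of \Cref{sec_elforstab} (whose hypothesis \eqref{contieneleamedianulla} is satisfied by $\mathcal W_2$), $u\in C^\infty[0,2L]$, and by \eqref{primavoltaequazionenonhom} there exists $\nu\in\rr$ with $(r\ddot u)''+\mu_{\mathcal W_2}(q\dot u)'+pu=\nu$ on $(0,2L)$.

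The second step is to recover the natural boundary conditions from \eqref{weakstabwithbc}. Since $u$ satisfies $u(0)=u(2L)=0$ and $\int_0^{2L}u\,ds=0$ by membership in $\mathcal W_2$, I would plug into \eqref{weakstabwithbc} a test function $v\in\mathcal W_2$: the bulk term vanishes except for the constant $\nu$ (which integrates against $v$ to give $\nu\int_0^{2L}v\,ds=0$), so $[r\ddot u\,\dot v]_0^{2L}=0$ for all $v\in\mathcal W_2$. Because $v$ ranges over a space in which $\dot v(0)$ and $\dot v(2L)$ can be prescribed independently (the constraints $v(0)=v(2L)=0$ and $\int_0^{2L}v\,ds=0$ leave $\dot v(0),\dot v(2L)$ free), this forces $r(0)\ddot u(0)=r(2L)\ddot u(2L)=0$, hence $\ddot u(0)=\ddot u(2L)=0$ since $r=2H^{-3}>0$. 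Thus $u$ solves \eqref{bvponesidedF} with $\mu=\mu_{\mathcal W_2}$ and this $\nu$.

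Finally, for minimality of $\mu_{\mathcal W_2}$ among positive $\mu$ admitting a nontrivial solution to \eqref{bvponesidedF}: suppose $\tilde u\not\equiv 0$ solves \eqref{bvponesidedF} with some $\tilde\mu>0$ and $\tilde\nu\in\rr$. Then $\tilde u\in\mathcal W_2$, and multiplying the ODE by $\tilde u$, integrating over $(0,2L)$ and integrating by parts twice, the boundary terms vanish (using $\tilde u(0)=\tilde u(2L)=0$ for the first-order terms and $\ddot{\tilde u}(0)=\ddot{\tilde u}(2L)=0$ for the second-order terms), while $\tilde\nu\int_0^{2L}\tilde u\,ds=0$; one obtains $b(\tilde u,\tilde u)-\tilde\mu\,a(\tilde u,\tilde u)=0$, so $\tilde\mu=b(\tilde u,\tilde u)/a(\tilde u,\tilde u)\geq\mu_{\mathcal W_2}$. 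This also shows any solution of \eqref{bvponesidedF} at $\mu=\mu_{\mathcal W_2}$ realizes the infimum, hence is a minimizer of \eqref{minprobstab}. The only point requiring a little care — and the analogue of the ``main obstacle'' here — is the surjectivity of the trace map $v\mapsto(\dot v(0),\dot v(2L))$ on $\mathcal W_2$ used in the second step; this is routine (exhibit explicit smooth bump functions vanishing at the endpoints with prescribed derivatives there and zero mean, correcting the mean by adding a fixed mean-one function supported in the interior), but it should be stated, since the presence of the integral constraint in $\mathcal W_2$ is exactly what makes \eqref{bvponesidedF} inhomogeneous and distinguishes this case from \Cref{characteigentwosided}.
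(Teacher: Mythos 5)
Your proposal is correct and follows essentially the same route as the paper: minimizer via \Cref{exminstab}, the Euler--Lagrange identity \eqref{primavoltaequazionenonhom} with a constant $\nu$, recovery of the natural conditions $\ddot u(0)=\ddot u(2L)=0$ from \eqref{weakstabwithbc} using that $\dot v(0),\dot v(2L)$ can be prescribed in $\mathcal W_2$, and minimality by multiplying the ODE by a solution and integrating by parts (the paper simply delegates this last step to the argument of \Cref{characteigentwosided}). Your explicit remark on the surjectivity of $v\mapsto(\dot v(0),\dot v(2L))$ on $\mathcal W_2$ and on why $\nu\int_0^{2L}\tilde u\,ds=0$ kills the inhomogeneity is a sensible elaboration of details the paper leaves implicit.
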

\begin{proof}
By \Cref{exminstab}, \eqref{minprobstab} has a minimizer, say $u\in\mathcal W_22$. In particular, $u(0)=u(2L)=0$ and $\int_0^{2L}u\,ds=0$. Moreover, recall that there exists $\nu\in\rr$ such that $u$ solves \eqref{primavoltaequazionenonhom}.
Finally, by \eqref{weakstabwithbc} and \eqref{primavoltaequazionenonhom}, and recalling that $r(0)=r(2L)>0$,
\begin{equation*}
\ddot u(0)\dot v(0)=  \ddot u(2L)\dot v(2L)\qquad\text{for any $v\in\mathcal W$.}
\end{equation*}
In particular, $\ddot u(0)=\ddot u(2L)=0$.
Therefore, $u$ solves \eqref{bvptwosided} with $\mu=\mu_{\mathcal W_2}$. The rest of the proof follows as in the proof of \Cref{characteigentwosided}.
\end{proof}

\section{Local minimality}\label{sec_locmin}
This section is dedicated to show that given $x_0, L > 0$ with $3x_0 < L$, the critical point $\gamma(x_0,L)$ found in \cref{scoprirepunticritici} is a local minimizer of $\F-\lambda\A$ with respect to normal geodesic variations. The proof of this fact strongly relies on the lower bound for the second variation of $\F-\lambda\A$ given in \cref{characteigentwosided}.
We recall that 
\begin{equation*}
\mathcal W_1=\left\{\varphi\in W^{2,2}(0,2L)\,:\,\varphi(0)=\varphi(2L)=0,\,\dot\varphi(0)=\dot\varphi(2L)=0\right\}.
\end{equation*} 


\begin{theorem}\label{locminthm}
Fix  $x_0, \cA_0 > 0$ with $\frac{3}{2}\pi x_0^2<\cA_0 $. Let $\ga= \ga(x_0,L(x_0,\cA_0))$ be the critical point given by \Cref{uniquecriticalthm}. Let $\lambda=\lambda(x_0,L(x_0,\cA_0))$ be the constant given in \eqref{definitionoflambda}. Let $\mathcal W_1$ be as in \eqref{defiw1}. Then, there exists $\varepsilon=\varepsilon(x_0,\A_0)>0$ such that for all $\phi \in \cW \cap C^2[0,2L]$ with $\phi\not\equiv 0$ and
\begin{equation*}
\| \phi \|_{C^{2}(0,2L)} \leq \varepsilon ,
\end{equation*}
it holds that
\begin{equation*}
(\mathcal F-\lambda  \A )(\gamma) < (\mathcal F-\lambda  \A )(\gamma + \phi N) , 
\end{equation*}
that is, $\gamma$ is a minimizer for $\F-\lambda\A$ among small normal geodesic variations of $\gamma$.
In particular, if $\cA(\gamma) = \cA(\gamma + \phi N)$, then
\begin{equation*}
\mathcal{F}(\gamma) < \mathcal{F}(\gamma + \phi N) ,
\end{equation*}
that is, $\gamma$ is an area-preserving minimizer of $\mathcal{F}$
\end{theorem}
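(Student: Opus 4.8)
The plan is to develop $\F-\lambda\cA$ to second order along the geodesic family $w\mapsto\gamma+w\phi N$ and to absorb the higher-order remainder using the coercivity of the second variation provided by \Cref{teoremastabilitaperg}. Throughout write $L=L(x_0,\cA_0)$, so that $\gamma=\gamma(x_0,L)$ and $\lambda=\lambda(x_0,L)$ as in \eqref{definitionoflambda}, and recall that $H$ is smooth and strictly positive on $[0,2L]$. First I would fix $\varepsilon_0=\varepsilon_0(x_0,\cA_0)>0$ so small that, for every $\phi\in\mathcal W_1\cap C^2[0,2L]$ with $\|\phi\|_{C^2(0,2L)}\le\varepsilon_0$, the curve $\gamma+w\phi N$ is a regular, strictly counterclockwise embedding with positive $y$-coordinate on $(0,2L)$ and endpoints $(\pm x_0,0)$ for all $w\in[-1,1]$: regularity, the sign of the curvature and injectivity are stable under $C^2$-small perturbations; the endpoints are fixed because $\phi(0)=\phi(2L)=0$; and positivity of the $y$-coordinate near $s=0$ follows from $y(s)=\tfrac12H(0)s^2+o(s^2)$ together with $\phi(0)=\dot\phi(0)=0$ (whence $\phi(s)=O(s^2)$), and symmetrically near $s=2L$ --- exactly the conditions defining $\mathcal W_1$. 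Hence $\mathcal V(s,w)=\gamma(s)+w\phi(s)N(s)$, $w\in[-1,1]$, is an admissible (two-sided) variation of $\gamma$ whose velocity is purely normal, equal to $\phi N$ (so in the notation of \Cref{corollstbofg} the normal component is $\phi$ itself and the tangential component vanishes), and whose acceleration is \emph{zero} ($\psi\equiv\psi_\tau\equiv0$); the functionals $\F$ and $\cA$, as well as the first and second variation formulas, make sense for such $C^2$ curves, and \eqref{eq:est_secvarG} holds for every $\phi\in\mathcal W_1$.

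Using $\F(\gamma_w)=\int_0^{2L}\langle\dot\gamma_w,\dot\gamma_w\rangle^2\langle\dot\gamma_w,\ddot\gamma_w^\perp\rangle^{-1}\,ds$ (as in the proof of \Cref{propsecvaroff}; cf. \eqref{firststepsecondvar}) and the identities $\ddot\gamma=-HN$, $\dot N=H\dot\gamma$ from \eqref{dotgammaeddotgamma}, a direct computation shows that $\langle\dot\gamma_w,\dot\gamma_w\rangle$ and $\langle\dot\gamma_w,\ddot\gamma_w^\perp\rangle$ are polynomials in $w$ of degree two whose coefficients are polynomial in $\phi,\dot\phi,\ddot\phi$ (and $H,\dot H$) and whose $w$-constant terms are $1$ and $H$ respectively. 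Thus the integrand of $\F(\gamma_w)$ is $F\big(s;w\phi(s),w\dot\phi(s),w\ddot\phi(s)\big)$ for a function $F=F(s;p,q,r)$ that is smooth on $[0,2L]\times U$ for some neighbourhood $U$ of $0\in\rr^3$ (the denominator equals $H(s)>0$ at $(p,q,r)=0$) with $F(s;0,0,0)=H(s)^{-1}$; after possibly shrinking $\varepsilon_0$ we may assume $(\phi(s),\dot\phi(s),\ddot\phi(s))\in U$ for all $s$. Moreover $\cA(\gamma_w)=\tfrac12\int_0^{2L}\langle\gamma_w,\dot\gamma_w^\perp\rangle\,ds$ is a polynomial in $w$ of degree at most two. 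Taylor-expanding $F$ to second order in $(p,q,r)$ at the origin, integrating, and using the exact quadratic expansion of $\cA$, one gets
\begin{equation*}
(\F-\lambda\cA)(\gamma+\phi N)=(\F-\lambda\cA)(\gamma)+\mathrm I+\mathrm{II}+\mathrm{III},
\end{equation*}
where $\mathrm I$ is the first variation of $\F-\lambda\cA$ at $\gamma$ along $\phi N$, $\mathrm{II}$ is one half of its second variation along $\phi N$, and $\mathrm{III}=\int_0^{2L}R(s)\,ds$ with $R$ the degree-three Taylor remainder of $F$. By \Cref{uniquecriticalthm} (with $\lambda=\lambda(x_0,L)$) we have $\mathrm I=0$; by \Cref{corollstbofg}, since $\psi\equiv0$ the boundary term $[\dot\psi/H^2]_0^{2L}$ vanishes, so
\begin{equation*}
\mathrm{II}=\frac12\int_0^{2L}\left((2-\lambda)H\phi^2-\frac{2\dot\phi^2}{H}+\frac{2\ddot\phi^2}{H^3}\right)ds\ \ge\ \frac12\,\mathcal C_{\mathcal W_1}\,\|\phi\|^2_{W^{2,2}(0,2L)},
\end{equation*}
where the inequality is \eqref{eq:est_secvarG} of \Cref{teoremastabilitaperg}, applicable since $\phi\in\mathcal W_1$.

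It remains to estimate $\mathrm{III}$. The key point is that, by the chain rule, each $\partial_w$ applied to $F(s;w\phi,w\dot\phi,w\ddot\phi)$ produces exactly one extra factor from $\{\phi,\dot\phi,\ddot\phi\}$; hence $R$ is a finite sum of terms, each a third partial derivative of $F$ evaluated at a point of $U$ times a degree-three monomial in $\phi,\dot\phi,\ddot\phi$. These partials are bounded on $[0,2L]\times V$ by a constant $C=C(x_0,\cA_0)$, where $V$ is a neighbourhood of $0$ slightly smaller than $U$ (the denominator stays away from $0$ there, and $H,\dot H$ are bounded on $[0,2L]$), so after shrinking $\varepsilon_0$ once more $|R(s)|\le C(|\phi(s)|+|\dot\phi(s)|+|\ddot\phi(s)|)^3\le3C\,\|\phi\|_{C^2(0,2L)}\,(|\phi(s)|+|\dot\phi(s)|+|\ddot\phi(s)|)^2$ and therefore $|\mathrm{III}|\le9C\,\|\phi\|_{C^2(0,2L)}\,\|\phi\|^2_{W^{2,2}(0,2L)}$. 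Choosing $\varepsilon\in(0,\varepsilon_0]$ with $9C\varepsilon<\tfrac14\mathcal C_{\mathcal W_1}$, it follows that for $\phi\in\mathcal W_1\cap C^2[0,2L]$, $\phi\not\equiv0$, with $\|\phi\|_{C^2(0,2L)}\le\varepsilon$,
\begin{equation*}
(\F-\lambda\cA)(\gamma+\phi N)-(\F-\lambda\cA)(\gamma)=\mathrm{II}+\mathrm{III}\ \ge\ \left(\tfrac12\mathcal C_{\mathcal W_1}-9C\varepsilon\right)\|\phi\|^2_{W^{2,2}(0,2L)}\ \ge\ \tfrac14\mathcal C_{\mathcal W_1}\,\|\phi\|^2_{W^{2,2}(0,2L)}>0,
\end{equation*}
which is the first assertion. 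If in addition $\cA(\gamma)=\cA(\gamma+\phi N)$, the $\lambda\cA$-terms cancel, giving $\F(\gamma)<\F(\gamma+\phi N)$.

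The main obstacle is the mismatch of norms: the second variation is coercive only in $W^{2,2}(0,2L)$ (\Cref{teoremastabilitaperg}), whereas the remainder $\mathrm{III}$ is naturally controlled by the $C^2$-norm, and $W^{2,2}(0,2L)$ does not embed into $C^2(0,2L)$, so a crude bound $|\mathrm{III}|\lesssim\|\phi\|_{C^2(0,2L)}^3$ could not be absorbed into $\mathrm{II}\gtrsim\|\phi\|^2_{W^{2,2}(0,2L)}$. The resolution is to exploit the precise multilinear structure of the remainder --- so that exactly one factor is measured in the small $C^2$-norm while the remaining two stay in the $W^{2,2}$-norm in which $\mathrm{II}$ dominates --- together with the observation that $\cA$ is exactly quadratic along geodesic variations and hence contributes nothing to $\mathrm{III}$.
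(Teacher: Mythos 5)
Your proposal is correct and follows essentially the same route as the paper's proof: a Taylor expansion of the Lagrangian along the geodesic normal family, with the first-order term vanishing by criticality (\Cref{uniquecriticalthm}), the second-order term bounded below via \eqref{eq:est_secvarG} from \Cref{teoremastabilitaperg}, and the cubic remainder controlled by $C\,\|\phi\|_{C^2}\|\phi\|^2_{W^{2,2}}$ through its multilinear structure and then absorbed by choosing $\varepsilon$ small. The only differences are cosmetic (you phrase the integrand as $F(s;w\phi,w\dot\phi,w\ddot\phi)$ and note explicitly that $\A$ is exactly quadratic, while the paper works with the Lagrangian $\Lag(q,z,r)$ and a Lagrange-form remainder $R_\tau$), so no further comment is needed.
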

\begin{proof}
Fix $\phi \in \cW \cap C^2[0,2L]$, and set $\cV(s,t) = \gamma(s) + t \, \phi(s)N(s)$.
As usual, denote by $\gamma_t = \cV(\cdot,t) = (x_t,y_t)$ and by $H_t$ the curvature of $\gamma_t$. Consider the Lagrangian function
\begin{equation*}
\Lag(q,z,r)= \dfrac{(z_1^2 + z_2^2)^2}{z_1 r_2 - z_2 r_1} - \dfrac{\lambda}{2} (q_1 z_2 - q_2 z_1)
\qquad \text{where} \qquad q = (q_1,q_2), \, z = (z_1, z_2) , \, r = (r_1, r_2) .
\end{equation*}
By the definition of $\F-\lambda\A$, we have
\begin{equation*}
(\mathcal F-\lambda  \A )(\gamma_t) = \int_0^{2L} \Lag(\gamma_t(s), \dot \gamma_t(s) , \ddot \gamma_t(s)) \, ds .
\end{equation*}
Let $q(s)$, $r(s)$ and $z(s)$ be such that
\begin{equation} \label{eq:gamma_t}
\gamma_t(s) = \gamma(s) + t \, q(s),\qquad \dot \gamma_t(s) = \dot \gamma(s) + t \, r(s) \qquad \text{and} \qquad \ddot \gamma_t(s) = \ddot \gamma(s) + t \, z(s) .
\end{equation}
From now on, we will omit the dependence on $s$ for the functions that appear along the proof, when this does not create confusion. By the definition of $\gamma_t$, $q = \phi N$. Then a simple computation allows us to find the components of $r$ and $z$:
\begin{equation} \label{eq:comp_p_dotp}
\begin{split}
& r_1 = \dot \phi \dot y + \phi \ddot y \hspace{1.9cm} r_{2} = - \dot \phi \dot x - \phi \ddot x \\
& z_{1} = \ddot \phi \dot y + 2 \dot \phi \ddot y + \phi \dddot y \qquad z_2 = - \ddot \phi \dot x - 2 \dot \phi \ddot x - \phi \dddot x .
\end{split}
\end{equation}
Denote by $U := (q,r,z)$. Thanks to \eqref{eq:gamma_t}, we can apply the Taylor expansion formula with Lagrange remainder to $\Lag(\gamma_t , \dot \gamma_t , \ddot \gamma_t)$, for $t=1$, deducing that, for a suitable $\tau = \tau(s) \in (0,1)$,
\begin{equation} \label{eq:Lag_t=1}
\begin{split}
\Lag(\gamma_1, \dot \gamma_1 , \ddot \gamma_1) & = \Lag(\gamma, \dot \gamma , \ddot \gamma) + \left \langle \nabla \Lag(\gamma, \dot \gamma , \ddot \gamma), U \right \rangle + \dfrac{1}{2} \left \langle \nabla^2 \Lag(\gamma_{\tau}, \dot \gamma_{\tau}, \ddot \gamma_{\tau}) U, U \right \rangle \\
& = \Lag(\gamma, \dot \gamma , \ddot \gamma) + \left \langle \nabla \Lag(\gamma, \dot \gamma , \ddot \gamma), U \right \rangle + \dfrac{1}{2} \left \langle \nabla^2 \Lag(\gamma, \dot \gamma, \ddot \gamma) U, U \right \rangle + \langle R_{\tau} U, U \rangle , 
\end{split}
\end{equation}
where $R_{\tau} := \frac{1}{2} \left( \nabla^2 \Lag(\gamma_{\tau}, \dot \gamma_{\tau}, \ddot \gamma_{\tau}) - \nabla^2 \Lag(\gamma, \dot \gamma, \ddot \gamma) \right)$.
We now observe that
\begin{equation} \label{eq:first_var_Lag}
\int_0^{2L} \left \langle \nabla \Lag(\gamma, \dot \gamma , \ddot \gamma), U \right \rangle ds = 0 ,
\end{equation}
because $\gamma$ is critical for $\mathcal F-\lambda  \A $, and, by the expression for the second variation of $\F-\lambda\A$ at $\gamma$ provided by \eqref{secvarG}, we also deduce that
\begin{equation} \label{eq:sec_var_Lag}
\dfrac{1}{2} \int_0^{2L} \left \langle \nabla^2 \Lag(\gamma, \dot \gamma, \ddot \gamma) U, U \right \rangle ds \geq \int_0^{2L}\left((2-\lambda)H\varphi^2-\frac{2\dot\varphi^2}{H}+\frac{2\ddot\varphi^2}{H^3}\right)\,ds.
\end{equation}
Therefore, integrating \eqref{eq:Lag_t=1} and applying \eqref{eq:first_var_Lag} and \eqref{eq:sec_var_Lag}, we infer that
\begin{equation} \label{eq:est_Ggamma1}
(\mathcal F-\lambda  \A )(\gamma_1) \geq (\mathcal F-\lambda  \A )(\gamma) + \int_0^{2L}\left((2-\lambda)H\varphi^2-\frac{2\dot\varphi^2}{H}+\frac{2\ddot\varphi^2}{H^3}\right)\,ds + \int_0^{2L} \langle R_{\tau} U, U \rangle \, ds .
\end{equation}
We estimate the second integral on the right hand side of \eqref{eq:est_Ggamma1}. First we observe that 
\begin{equation} \label{eq:CS_estim}
\left | \langle R_{\tau} U , U \rangle \right | \leq \left | R_{\tau} U \right | \left | U \right | \leq \left | R_{\tau} \right | \left | U \right |^2 .
\end{equation}
Moreover, by the definition of $U$, there exists $C_1 = C_1(x_0,\cA_0) > 0$ with the property that, for every $s$,
\begin{equation} \label{eq:pw_est_U}
|U(s)| \leq C_1 \sup \{ |\phi(s)|, |\dot \phi(s)|, |\ddot \phi(s)| \} , 
\end{equation}
and so in particular, for some $C_2 = C_2(x_0,\A_0) > 0$,
\begin{equation} \label{eq:est_unif_U}
\|U\|_{C^0[0,2L]} \leq C_2 \| \phi \|_{C^2[0,2L]} \leq C_2 \, \epsilon .
\end{equation}
Consider
$K := \{ (\gamma_t(s), \dot \gamma_t(s), \ddot \gamma_t(s)) \, : \, s \in [0,2L] \, \, \text{and} \, \, 0 \leq t \leq 1 \}$.
Recalling that $(\gamma(s), \dot \gamma(s), \ddot \gamma(s)) + t \, U(s)$, by the estimate \eqref{eq:est_unif_U}, if we take $\epsilon > 0$ small enough, we can choose $C_3 = C_3(x_0,\cA_0) > 0$ such that
\begin{equation*}
\| \nabla^3 \Lag \|_{C^0(K)} \leq C_3 .
\end{equation*}
Hence, we have
\begin{equation} \label{eq:pw_est_Rtau}
|R_{\tau}(s)| = \dfrac{1}{2} |\nabla^2 \Lag(\gamma_{\tau}(s), \dot \gamma_{\tau}(s), \ddot \gamma_{\tau}(s)) - \nabla^2 \Lag(\gamma(s), \dot \gamma(s), \ddot \gamma(s))| \leq \dfrac{C_3}{2} |U(s)| . 
\end{equation}
Thus, \eqref{eq:CS_estim} in combination with \eqref{eq:pw_est_Rtau} and \eqref{eq:pw_est_U} ensure that, for some $C_4 = C_4(x_0,\cA_0) > 0$,
\begin{equation} \label{eq:est_R_def}
\begin{split}
\int_0^{2L} \langle R_{\tau} U, U \rangle \, ds \leq \dfrac{C_1^3 \, C_3}{2} \int_0^{2L} \sup \{ |\phi|, |\dot \phi|, |\ddot \phi| \}^3 ds 
\leq C_4 \, \| \phi \|_{C^2[0,2L]} \, \| \phi \|_{W^{2,2}(0,2L)}^2 . 
\end{split}
\end{equation}
Expression \eqref{eq:est_R_def} together with \eqref{eq:est_secvarG} gives that
\begin{equation*} 
\begin{split}
\int_0^{2L}\left((2-\lambda)H\varphi^2-\frac{2\dot\varphi^2}{H}+\frac{2\ddot\varphi^2}{H^3}\right)\,ds + \int_0^{2L} R_{\tau}(s) \, ds & \geq \mathcal{C}_{\cW} \| \phi \|_{W^{2,2}(0,2L)}^2 \left( 1 - \dfrac{C_4}{\mathcal{C}_{\cW}} \| \phi \|_{C^2[0,2L]} \right) ,
\end{split}
\end{equation*}
and consequently, if we pick $\varepsilon$ sufficiently small,
\begin{equation} \label{eq:positivity_secvar}
\begin{split}
\int_0^{2L}\left((2-\lambda)H\varphi^2-\frac{2\dot\varphi^2}{H}+\frac{2\ddot\varphi^2}{H^3}\right)\,ds + \int_0^{2L} R_{\tau}(s) \, ds & \geq \dfrac{\mathcal{C}_{\cW}}{2} \| \phi \|_{W^{2,2}(0,2L)}^2 \geq 0.
\end{split}
\end{equation}
Loading \eqref{eq:positivity_secvar} inside \eqref{eq:est_Ggamma1} we finally conclude that
\begin{equation*}
(\mathcal F-\lambda  \A )(\gamma) \leq (\mathcal F-\lambda  \A )(\gamma_1) = (\mathcal F-\lambda  \A )(\gamma + \phi N) \qquad \text{whenever} \qquad \| \phi \|_{C^2[0,2L]} \leq \varepsilon .
\end{equation*}
Clearly, if $(\mathcal F-\lambda  \A )(\gamma) = (\mathcal F-\lambda  \A )(\gamma + \phi N)$ for some $\phi$ satisfying $\| \phi \|_{C^2[0,2L]} \leq \varepsilon$, then \eqref{eq:est_Ggamma1} and \eqref{eq:positivity_secvar} imply that $\| \phi \|_{W^{2,2}(0,2L)} = 0$, i.e., $\phi \equiv 0$.
\end{proof}

\bibliographystyle{abbrv}
\bibliography{biblio}

\begin{thebibliography}{10}

\bibitem{MR86338}
A.~D. Aleksandrov.
\newblock Uniqueness theorems for surfaces in the large. {I}.
\newblock {\em Vestnik Leningrad. Univ.}, 11(19):5--17, 1956.

\bibitem{MR102114}
A.~D. Aleksandrov.
\newblock Uniqueness theorems for surfaces in the large. {V}.
\newblock {\em Vestnik Leningrad. Univ.}, 13(19):5--8, 1958.

\bibitem{MR731682}
J.~L. Barbosa and M.~do~Carmo.
\newblock Stability of hypersurfaces with constant mean curvature.
\newblock {\em Math. Z.}, 185(3):339--353, 1984.

\bibitem{Beesack_1971}
P.~R. Beesack.
\newblock Integral inequalities involving a function and its derivative.
\newblock {\em Amer. Math. Monthly}, 78:705--741, 1971.

\bibitem{MR4819146}
S.~Borghini, M.~Fogagnolo, and A.~Pinamonti.
\newblock The equality case in the substatic {H}eintze-{K}archer inequality.
\newblock {\em Arch. Ration. Mech. Anal.}, 248(6):Paper No. 108, 26, 2024.

\bibitem{MR3090261}
S.~Brendle.
\newblock Constant mean curvature surfaces in warped product manifolds.
\newblock {\em Publ. Math. Inst. Hautes \'Etudes Sci.}, 117:247--269, 2013.

\bibitem{MR2759829}
H.~Brezis.
\newblock {\em Functional analysis, {S}obolev spaces and partial differential
  equations}.
\newblock Universitext. Springer, New York, 2011.

\bibitem{derosa2025rigiditycriticalpointshydrophobic}
A.~De~Rosa, R.~Neumayer, and R.~Resende.
\newblock Rigidity of critical points of hydrophobic capillary functionals,
  2025.
\newblock preprint, arxiv link at
  \url{https://doi.org/10.48550/arXiv.2509.22532}.

\bibitem{MR3921314}
M.~G. Delgadino and F.~Maggi.
\newblock Alexandrov's theorem revisited.
\newblock {\em Anal. PDE}, 12(6):1613--1642, 2019.

\bibitem{MR4783570}
M.~G. Delgadino and D.~Weser.
\newblock A {H}eintze-{K}archer inequality with free boundaries and
  applications to capillarity theory.
\newblock {\em J. Funct. Anal.}, 287(9):Paper No. 110584, 43, 2024.

\bibitem{Flork_Lasarz_2001}
B.~a. Florkiewicz and K.~Wojteczek.
\newblock Some second order integral inequalities.
\newblock In {\em Proceedings of the {T}hird {W}orld {C}ongress of {N}onlinear
  {A}nalysts, {P}art 6 ({C}atania, 2000)}, volume~47, pages 4307--4312, 2001.

\bibitem{MR4438902}
M.~Fogagnolo and A.~Pinamonti.
\newblock New integral estimates in substatic {R}iemannian manifolds and the
  {A}lexandrov theorem.
\newblock {\em J. Math. Pures Appl. (9)}, 163:299--317, 2022.

\bibitem{MR1814364}
D.~Gilbarg and N.~S. Trudinger.
\newblock {\em Elliptic partial differential equations of second order}.
\newblock Classics in Mathematics. Springer-Verlag, Berlin, 2001.
\newblock Reprint of the 1998 edition.

\bibitem{MR533065}
E.~Heintze and H.~Karcher.
\newblock A general comparison theorem with applications to volume estimates
  for submanifolds.
\newblock {\em Ann. Sci. \'Ecole Norm. Sup. (4)}, 11(4):451--470, 1978.

\bibitem{MR4562813}
X.~Jia, G.~Wang, C.~Xia, and X.~Zhang.
\newblock Alexandrov's theorem for anisotropic capillary hypersurfaces in the
  half-space.
\newblock {\em Arch. Ration. Mech. Anal.}, 247(2):Paper No. 25, 19, 2023.

\bibitem{MR4817296}
X.~Jia, G.~Wang, C.~Xia, and X.~Zhang.
\newblock Heintze-{K}archer inequality for anisotropic free boundary
  hypersurfaces in convex domains.
\newblock {\em J. Math. Study}, 57(3):243--258, 2024.

\bibitem{MR4567578}
X.~Jia, C.~Xia, and X.~Zhang.
\newblock A {H}eintze-{K}archer-type inequality for hypersurfaces with
  capillary boundary.
\newblock {\em J. Geom. Anal.}, 33(6):Paper No. 177, 19, 2023.

\bibitem{MR4127847}
C.~Ketterer.
\newblock The {H}eintze-{K}archer inequality for metric measure spaces.
\newblock {\em Proc. Amer. Math. Soc.}, 148(9):4041--4056, 2020.

\bibitem{Kwong_Zettl_2006}
M.~K. Kwong and A.~Zettl.
\newblock {\em Norm inequalities for derivatives and differences}.
\newblock Springer, 2006.

\bibitem{Leighton_1970}
W.~Leighton.
\newblock Quadratic functionals of second order.
\newblock {\em Trans. Amer. Math. Soc.}, 151:309--322, 1970.

\bibitem{MR4031740}
J.~Li and C.~Xia.
\newblock An integral formula and its applications on sub-static manifolds.
\newblock {\em J. Differential Geom.}, 113(3):493--518, 2019.

\bibitem{MR1173047}
S.~Montiel and A.~Ros.
\newblock Compact hypersurfaces: the {A}lexandrov theorem for higher order mean
  curvatures.
\newblock In {\em Differential geometry}, volume~52 of {\em Pitman Monogr.
  Surveys Pure Appl. Math.}, pages 279--296. Longman Sci. Tech., Harlow, 1991.

\bibitem{MR2161354}
F.~Morgan.
\newblock Manifolds with density.
\newblock {\em Notices Amer. Math. Soc.}, 52(8):853--858, 2005.

\bibitem{MR474149}
R.~C. Reilly.
\newblock Applications of the {H}essian operator in a {R}iemannian manifold.
\newblock {\em Indiana Univ. Math. J.}, 26(3):459--472, 1977.

\bibitem{MR4676392}
M.~Ritor\'e.
\newblock {\em Isoperimetric inequalities in {R}iemannian manifolds}, volume
  348 of {\em Progress in Mathematics}.
\newblock Birkh\"auser/Springer, Cham, [2023] \copyright 2023.

\bibitem{MR996826}
A.~Ros.
\newblock Compact hypersurfaces with constant higher order mean curvatures.
\newblock {\em Rev. Mat. Iberoamericana}, 3(3-4):447--453, 1987.

\bibitem{Talenti_1975}
G.~Talenti.
\newblock Una disuguaglianza fra {$u,$} {$u',$} {$u''$}.
\newblock {\em Boll. Un. Mat. Ital. (4)}, 11(3):375--388, 1975.

\bibitem{MR3702549}
C.~Xia and X.~Zhang.
\newblock A{BP} estimate and geometric inequalities.
\newblock {\em Comm. Anal. Geom.}, 25(3):685--708, 2017.

\end{thebibliography}
\end{document}